\theoremstyle{plain}
\newtheorem{theorem}{Theorem}[section]
\newtheorem{lemma}{Lemma}[section]
\theoremstyle{definition}
\newtheorem{definition}{Definition}[section]
\newtheorem{assumption}{Assumption}[section]
\newtheorem{remark}{Remark}[section]
\newcommand{\R}{\mathbb{R}}
\newcommand{\Rext}{\R\cup\{+\infty\}}
\newcommand{\set}[1]{\left\{#1\right\}}
\newcommand{\sets}[1]{\{#1\}}
\newcommand{\norm}[1]{\left\Vert#1\right\Vert}
\newcommand{\norms}[1]{\Vert#1\Vert}
\newcommand{\prox}{\mathrm{prox}}
\newcommand{\argmin}{\mathrm{arg}\!\displaystyle\min}
\newcommand{\dom}[1]{\mathrm{dom}(#1)}
\newcommand{\zero}[1]{{\boldsymbol{0}}}
\newcommand{\Exps}[2]{\mathbb{E}_{#1}\big[#2\big]}
\newcommand{\Vars}[2]{\mathrm{Var}_{#1}\big[#2\big]}
\newcommand{\Prob}[1]{\mathbf{Prob}\left(#1\right)}
\newcommand{\Fb}{\mathbf{F}}
\newcommand{\Bc}{\mathcal{B}}
\newcommand{\Gc}{\mathcal{G}}
\newcommand{\Xc}{\mathcal{X}}
\newcommand{\Ec}{\mathcal{E}}
\newcommand{\Rc}{\mathcal{R}}
\newcommand{\Tc}{\mathcal{T}}
\newcommand{\Fc}{\mathcal{F}}
\newcommand{\iprods}[1]{\langle #1\rangle}
\newcommand{\Exp}[1]{\mathbb{E}\big[#1\big]}
\newcommand{\dist}[1]{\mathrm{dist}\left(#1\right)}
\newcommand{\BigO}[1]{\mathcal{O}\left(#1\right)}
\newcommand{\rounds}[1]{\big\lfloor{#1}\big\rfloor}
\newcommand{\mytxtbi}[1]{\textbf{\textit{#1}}}
\newcommand{\beforesubsec}{\vspace{0ex}}
\newcommand{\aftersubsec}{\vspace{0ex}}
\newcommand{\beforesec}{\vspace{0ex}}
\newcommand{\aftersec}{\vspace{0ex}}
\title{Hybrid Variance-Reduced SGD Algorithms For Minimax Problems with Nonconvex-Linear Function}
\author{%
Quoc Tran-Dinh$^{*}$ \quad\qquad Deyi Liu$^{*}$ \quad\qquad Lam M. Nguyen$^{\dagger}$\vspace{0.5ex}\\
  $^{*}$Department of Statistics and Operations Research\\
  The University of North Carolina at Chapel Hill, Chapel Hill, NC 27599 \\
  Emails: \texttt{\{quoctd@email.unc.edu, deyi.liu@live.unc.edu\}} \vspace{0.25ex}\\
$^{\dagger}$IBM Research, Thomas J. Watson Research Center \\ Yorktown Heights, NY10598, USA.\\
Email: \texttt{lamnguyen.mltd@ibm.com}
}
\begin{document}

\maketitle

\begin{abstract}
We develop a novel and single-loop variance-reduced algorithm to solve a class of stochastic nonconvex-convex minimax problems involving a nonconvex-linear objective function, which has various applications in different fields such as machine learning and robust optimization.
This problem class has several computational challenges due to its nonsmoothness, nonconvexity, nonlinearity, and non-separability of the objective functions.
Our approach relies on a new combination of recent ideas, including smoothing and hybrid biased variance-reduced techniques.
Our algorithm and its variants can achieve $\mathcal{O}(T^{-2/3})$-convergence rate and the best known oracle complexity under standard assumptions, where $T$ is the iteration counter.
They have several computational advantages compared to existing methods such as simple to implement and less parameter tuning requirements. 
They can also work with both single sample or mini-batch on derivative estimators, and with constant or diminishing step-sizes.
We demonstrate the benefits of our algorithms over existing methods through two numerical examples, including a nonsmooth and nonconvex-non-strongly concave minimax model.
\end{abstract}

\beforesec
\section{Introduction}
\aftersec
We study the following stochastic minimax problem with nonconvex-linear objective function, which covers various practical problems in different fields, see, e.g., \cite{Ben-Tal2009,Facchinei2003,goodfellow2014generative}:
\begin{equation}\label{eq:min_max_form}
\min_{x\in\R^p}\max_{y\in\R^n}\Big\{ \Psi(x, y) := \Rc(x) + \Exps{\xi}{\iprods{Ky, \Fb(x,\xi)}} - \psi(y) \Big\},
\end{equation}
where $\Fb : \R^p \times \Omega \to \R^q$ is a stochastic vector function defined on a probability space $(\Omega, \mathbb{P})$, $K\in\R^{q\times n}$ is a given matrix, $\iprods{\cdot,\cdot}$ is an inner product, and $\psi: \R^n \to \R \cup \{ +\infty\}$ and $\Rc : \R^p\to\Rext$ are proper, closed, and convex functions \cite{Bauschke2011}.
Problem \eqref{eq:min_max_form} is a special case of the nonconvex-concave minimax problem, where  $\mathcal{H}(x, y) := \Exps{\xi}{\iprods{Ky, \Fb(x,\xi)}}$ is nonconvex in $x$ and linear in $y$.

Due to the linearity of $\mathcal{H}$ w.r.t. $y$, \eqref{eq:min_max_form} can be reformulated into a general stochastic compositional nonconvex problem of the form:
\begin{equation}\label{eq:com_nlp}
\min_{x\in\R^p} \Big\{ \Psi_0(x) := \phi_0(F(x)) + \Rc(x) \equiv \phi_0\left(\Exps{\xi}{\Fb(x,\xi)} \right) + \Rc(x) \Big\},
\end{equation}
where $\phi_0$ is a convex, but possibly nonsmooth function, defined as 
\begin{equation}\label{eq:phi_func}
\phi_0(u) := \max_{y\in\R^n}\set{ \iprods{K^{\top}u, y} - \psi(y)} \equiv \psi^{*}(K^{\top}u),
\end{equation}
with $\psi^{*}$ being the Fenchel conjugate of $\psi$ \cite{Bauschke2011}, and we define $\Phi_0(x) := \phi_0(F(x))$.
Note that problem \eqref{eq:com_nlp} is completely different from existing models such as \cite{drusvyatskiy2019efficiency,duchi2018stochastic}, where the expectation is inside the outer function $\phi_0$, i.e., $\phi_0\left(\Exps{\xi}{\Fb(x,\xi)} \right)$.
We refer to this setting as a ``non-separable" model.

\noindent\textbf{Challenges:}
Developing numerical methods for solving \eqref{eq:min_max_form} or \eqref{eq:com_nlp} faces several challenges.
First, it is often nonconvex, i.e., $F$ is not affine. 
Many recent papers consider special cases of \eqref{eq:com_nlp} when $\Psi_0$ in \eqref{eq:com_nlp} is convex by imposing restrictive conditions, which are unfortunately not realistic in applications. 
Second, the max-form $\phi_0$ in \eqref{eq:phi_func} is often nonsmooth if $\psi$ is not strongly convex.
This prevents the use of gradient-based methods.
Third, since the expectation is inside $\phi_0$, it is very challenging to form an unbiased estimate for [sub]gradients of $\Phi_0$, making classical stochastic gradient-based methods inapplicable.
Finally, prox-linear operator-based methods as in \cite{drusvyatskiy2019efficiency,duchi2018stochastic,tran2020stochastic,zhang2020stochastic} require large mini-batch evaluations of  both function value $\Fb$ and its Jacobian $\Fb'$, see  \cite{tran2020stochastic,zhang2019multi,zhang2020stochastic}, instead of single sample or small mini-batch, making them less flexible and more expensive  than gradient-based methods.

\noindent\textbf{Related work:}
Problem \eqref{eq:min_max_form} has recently attracted considerable attention due to key applications, e.g., in game theory, robust optimization, distributionally robust optimization, and generative adversarial nets (GANs) \cite{Ben-Tal2009,Facchinei2003,goodfellow2014generative,rahimian2019distributionally}. 
Various first-order methods have been developed to solve \eqref{eq:min_max_form} during the past decades for both convex-concave models , e.g., \cite{Bauschke2011,Korpelevic1976,Nemirovskii2004,tseng2008accelerated} and  nonconvex-concave settings \cite{lin2018solving,lin2019gradient,loizou2020stochastic,ostrovskii2020efficient,thekumparampil2019efficient}.
Some recent works consider a nonnonvex-nonconcave formulation, e.g., \cite{nouiehed2019solving,yang2020global}.
However, they still rely on additional assumptions to guarantee that the maximization problem in \eqref{eq:phi_func} can globally be solved.
One well-known assumption is the Polyak-{\L}ojasiewicz (PL) condition, which is rather strong and often used to guarantee linear convergence rates.
A majority of these works focus on deterministic models, while some methods have been extended to stochastic settings, e.g., \cite{lin2018solving,yang2020global}.
Although  \eqref{eq:min_max_form} is a special case of a general model in \cite{lin2018solving,lin2019gradient,yang2020global}, it almost covers all examples in \cite{lin2018solving,yang2020global}.
Compared to these, we only consider a special class of minimax problems where the function $\mathcal{H}$ is linear in $y$.
However, our algorithm is rather simple with a single loop, and our oracle complexity is significantly improved over the ones in \cite{lin2018solving,yang2020global}.

In a very recent work \cite{luo2020stochastic}, which is concurrent to our paper, the authors develop a double-loop algorithm, called SREDA, to handle a more general case than \eqref{eq:min_max_form} where $\mathcal{H}$ is strongly concave in $y$. 
Their method exploits the SARAH estimator introduced in \cite{nguyen2017sarah} and can achieve the same $\BigO{\varepsilon^{-3}}$ oracle complexity as ours in Theorem~\ref{th:convergence2_scvx} below.
Compared to our work here, though the problem setting in \cite{luo2020stochastic} is more general than \eqref{eq:min_max_form}, it does not cover the non-strongly convex case.
This is important to handle stochastic constrained optimization problems, where $\psi$ is nonsmooth and convex, but not necessarily strongly convex (see, e.g., \eqref{eq:min_max_stochastic_opt} below as an example). 
Moreover, the SREDA algorithm in  \cite{luo2020stochastic} requires double loops with large mini-batch sizes in both function values and derivatives and uses small learning rates to achieve the desired oracle complexity.

It is interesting that the minimax problem \eqref{eq:min_max_form} can be reformulated into a nonconvex compositional optimization problem of the form \eqref{eq:com_nlp}. 
The formulation \eqref{eq:com_nlp} has been broadly studied in the literature under both deterministic and stochastic settings, see, e.g.,  \cite{drusvyatskiy2019efficiency,duchi2018stochastic,Lewis2008,Nesterov2007g,Tran-Dinh2011,wang2017stochastic}.
If  $q=1$ and $\phi_0(u) = u$, then \eqref{eq:com_nlp} reduces to the standard stochastic optimization model studied e.g., in \cite{ghadimi2016accelerated,Pham2019}.
In the deterministic setting, one common method to solve \eqref{eq:com_nlp} is the prox-linear-type method, which is also known as a Gauss-Newton method \cite{Lewis2008,Nesterov2007g}.
This method has been studied in several papers, including \cite{drusvyatskiy2019efficiency,duchi2018stochastic,Lewis2008,Nesterov2007g,Tran-Dinh2011}.
However, the prox-linear operator often does not have a closed form expression, and its evaluation may require solving a general nonsmooth strongly convex  subproblem.

In the stochastic setting as \eqref{eq:com_nlp}, \cite{wang2017stochastic,wang2017accelerating} proposed stochastic compositional gradient methods to solve more general forms than \eqref{eq:com_nlp}, but they required a set of stronger assumptions than Assumptions~\ref{ass:A1}-\ref{ass:A2} below, including the smoothness of $\phi_0$.
Recent related works include \cite{lian2017finite,liu2017variance,xu2019katyusha,yang2019multilevel,yu2017fast}, which also rely on similar ideas.
For instance, \cite{lin2018solving} proposed a double loop subgradient-based method with $\BigO{\varepsilon^{-6}}$ oracle complexity.
Another subgradient-based method was recently proposed in \cite{yang2020global} based on a two-side PL condition.
Stochastic methods exploiting prox-linear operators have also been recently proposed in \cite{tran2020stochastic,zhang2020stochastic}, which are essentially extensions of existing deterministic methods to \eqref{eq:com_nlp}.
Together with algorithms, convergence guarantees, stochastic oracle complexity bounds have also been estimated.
For instance, \cite{wang2017stochastic} obtained $\BigO{\varepsilon^{-8}}$ oracle complexity for \eqref{eq:com_nlp}, while it was improved to $\BigO{\varepsilon^{-4.5}}$ in \cite{wang2017accelerating}.
Recent works \cite{zhang2019multi,zhang2019stochastic} further improved the complexity to $\BigO{\varepsilon^{-3}}$.
These methods require the smoothness of both $\phi_0$ and $F$, use large batch sizes, and need a double-loop scheme. 
In contrast, \textit{\textbf{our method has single loop, can work with either single sample or mini-batch, and allows both constant or diminishing step-sizes}}.
For nonsmooth $\phi_0$, under the same assumptions as \cite{tran2020stochastic,zhang2020stochastic}, our methods achieve $\BigO{\varepsilon^{-3}}$ Jacobian and $\BigO{\varepsilon^{-5}}$ function evaluation complexity as in those papers.
However, our method is gradient-based, which only uses proximal operator of $\psi$ and $\Rc$ instead of a complex prox-linear operator as in \cite{tran2020stochastic,zhang2020stochastic}.
Note that even if $\psi$ and $\Rc$ have closed-form proximal operator, the prox-linear operator still does not have a closed-form solution, and requires to solve a composite and possibly nonsmooth strongly convex subproblem involving a linear operator, see, e.g., \cite{tran2020stochastic}.
Moreover, our method can work with both single sample and mini-batch for Jacobian $\Fb'$ compared to a large batch size as in \cite{tran2020stochastic,zhang2020stochastic}.

\noindent\textbf{Our contribution:}
Our main contribution in this paper can be summarized as follows:
\begin{compactitem}
\item[(a)] We develop a new single-loop hybrid variance-reduced SGD algorithm to handle \eqref{eq:min_max_form} under Assumptions~\ref{ass:A1} and \ref{ass:A2} below.
Under the strong convexity of  $\psi$, our algorithm has $\BigO{(bT)^{-2/3}}$ convergence rate to approximate a KKT (Karush-Kuhn-Tucker) point of \eqref{eq:min_max_form}, where $b$ is the  batch size and $T$ is the iteration counter.
We also estimate an $\BigO{\varepsilon^{-3}}$-oracle complexity to obtain an $\varepsilon$-KKT point, matching the best known one as, e.g., in \cite{luo2020stochastic,zhang2019multi,zhang2019stochastic}.
Our complexity bound holds for a wide range of $b$ as opposed to a specific choice as in \cite{luo2020stochastic,zhang2019multi,zhang2019stochastic}.
Moreover, our algorithm has only a single loop compared to \cite{luo2020stochastic,zhang2019multi}..

\item [(b)] When $\psi$ is non-strongly convex, we combine our approach with a smoothing technique to develop a gradient-based variant, that can achieve the best-known $\BigO{\varepsilon^{-3}}$ Jacobian and $\BigO{\varepsilon^{-5}}$ function evaluations of $\Fb$ for finding an $\varepsilon$-KKT point of \eqref{eq:min_max_form}. 
Moreover, our algorithm does not require prox-linear operators and large batches for Jacobian as in \cite{tran2020stochastic,zhang2020stochastic}.

\item [(c)] We also propose a simple restarting technique without sacrificing convergence guarantees to accelerate the practical performance of both cases (a) and (b) (see Supp. Doc. \ref{apdx:sec:restarting_hSGD}).
\end{compactitem}
Our methods exploit a recent biased hybrid estimators introduced in \cite{Tran-Dinh2019a} as opposed to SARAH ones in \cite{tran2020stochastic,zhang2019multi,zhang2020stochastic}.
This allows us to simplify our algorithm with a single loop and without large batches at each iteration compared to \cite{zhang2019multi}.
As indicated in \cite{arjevani2019lower}, our $\BigO{\varepsilon^{-3}}$ oracle complexity is optimal under the considered assumptions.
If $\psi$ is non-strongly convex (i.e. $\phi_0$ in \eqref{eq:com_nlp} can be nonsmooth), then our algorithm is fundamentally different from the ones in \cite{tran2020stochastic,zhang2020stochastic} as it does not use prox-linear operator. Note that evaluating a prox-linear operator requires to solve a general strongly convex but possible nonsmooth subproblem.
In addition, they only work with large batch sizes of both $\Fb$ and $\Fb'$.

\noindent\textbf{Content:}
Section~\ref{sec:math_tools} states our assumptions and recalls some mathematical tools. 
Section~\ref{sec:alg_and_theory} develops a new algorithm and analyzes its convergence.
Section~\ref{sec:num_exps} provides two numerical examples to compare our methods.
All technical details and proofs are deferred to Supplementary Document (Supp. Doc.).

\beforesec
\section{Basic assumptions, KKT points and smoothing technique}\label{sec:math_tools}
\aftersec
\textbf{Notation:}
We work with finite-dimensional space $\R^p$ equipped with standard inner product $\iprods{\cdot,\cdot}$ and Euclidean norm $\norms{\cdot}$.
For a function $\phi :\R^p\to\Rext$, $\dom{\phi}$ denotes its domain.
If $\phi$ is convex, then $\prox_{\phi}$ denotes its proximal operator, $\partial{\phi}$ denotes its subdifferential, and $\nabla{\phi}$ is its [sub]gradient, see, e.g., \cite{Bauschke2011}.
$\phi$ is $\mu_{\phi}$-strongly convex with a strongly convex parameter $\mu_{\phi} > 0$ if $\phi(\cdot) - \frac{\mu_{\phi}}{2}\norms{\cdot}^2$ remains convex.
For a smooth vector function $F:\R^p\to\R^q$, $F'$ denotes its Jacobian.
We use $\dist{x,\Xc} := \inf_{y\in\Xc}\norms{x-y}$ to denote the Euclidean distance from $x$ to a convex set $\Xc$.

\beforesubsec
\subsection{Model assumptions}
\aftersubsec
Let $F(x) := \Exps{\xi}{\Fb(x,\xi)}$ denote the expectation function of $\Fb$ and $\dom{\Psi_0}$ denote the domain of $\Psi_0$.
Throughout this paper, we always assume that $\Psi_0^{\star} := \inf_{x\in\R^p}\set{ \Psi_0(x) := \phi_0(F(x)) + \Rc(x) }> -\infty$ in \eqref{eq:com_nlp} and $\Rc$ is proper, closed, and convex without recalling them in the sequel.
Our goal is to develop stochastic gradient-based algorithms to solve \eqref{eq:min_max_form} relying on the following assumptions:
\begin{assumption}\label{ass:A1}
The function $\Fb$ in problem \eqref{eq:min_max_form} or \eqref{eq:com_nlp} satisfies the following assumptions:
\begin{compactitem}
    \item[$\mathrm{(a)}$]\mytxtbi{Smoothness:} 
    $\Fb(\cdot,\cdot)$ is $L_{F}$-average smooth with $L_F \in (0, +\infty)$, i.e.:
    \begin{equation}\label{eq:F_smooth}	
	\Exps{\xi}{\norm{\Fb'(x,\xi) - \Fb'(y,\xi)}^2} \leq L_{F}^2\norm{x - y}^2,~~\forall x, y\in\dom{\Psi_0}.
    \end{equation}
    
     \item[$\mathrm{(b)}$]\mytxtbi{Bounded variance:}
    There exists two constants $\sigma_F, \sigma_J\in (0, +\infty)$ such that
    \begin{equation*}
    \Exps{\xi}{\norm{\Fb(x,\xi) - F(x)}^2} \leq \sigma_F^2 \quad \text{and}\quad \Exps{\xi}{\norm{\Fb'(x,\xi) - F'(x)}^2} \leq \sigma_{J}^2,~~~\forall x\in\dom{\Psi_0}.
    \end{equation*}
    \item[$\mathrm{(c)}$]\mytxtbi{Lipschitz continuity:}
     $F(\cdot)$ is $M_F$-average Lipschitz continuous with  $M_{F} \in (0, +\infty)$, i.e.:
    \begin{equation}\label{eq:F_Lipschitz}	
   	\Exps{\xi}{\norm{\Fb'(x,\xi)}^2} \leq M_F^2,~~~\forall x\in\dom{\Psi_0}.
    \end{equation}
\end{compactitem}
\end{assumption}
Note that Assumptions~\ref{ass:A1} are standard in stochastic nonconvex optimization, see \cite{tran2020stochastic,zhang2019multi,zhang2019stochastic,zhang2020stochastic}.
If $\dom{\Rc}$ is bounded, then $\dom{\Psi_0}$ is bounded, and this assumption automatically holds.

For $\psi$, we only require the following assumption, which is mild and holds for many applications.
\begin{assumption}\label{ass:A2}
The function $\psi$ in \eqref{eq:min_max_form} is proper, closed, and convex.
Moreover, $\dom{\psi}$ is bounded by $M_{\psi}\in (0,+\infty)$, i.e.: $\sup\set{\norms{y} : y\in\dom{\psi}}\leq M_{\psi}$.
\end{assumption}
An important special case of $\psi$ is  the indicator of convex and bounded sets.
Hitherto, we do not require $\phi_0$ and $\Rc$ in \eqref{eq:com_nlp} to be smooth or strongly convex. 
They can be nonsmooth so that \eqref{eq:com_nlp} can also cover constrained problems.
Note that the boundedness of $\dom{\psi}$ is equivalent to the Lipschitz continuity of $\phi_0$ (Lemma~\ref{le:properties_of_phi}).
Simple examples of $\phi_0$ include norms and gauge functions.

\beforesubsec
\subsection{KKT points and approximate KKT points}
\aftersubsec
Since \eqref{eq:min_max_form} is nonconvex-concave, a pair $(x^{\star}, y^{\star})$ is said to be a KKT point of  \eqref{eq:min_max_form} if
\begin{equation}\label{eq:KKT_point}
0 \in F'(x^{\star})^{\top}Ky^{\star} + \partial{\Rc}(x^{\star})\quad\quad\text{and}\quad\quad 0 \in K^{\top}F(x^{\star}) - \partial{\psi}(y^{\star}).
\end{equation}
From \eqref{eq:KKT_point}, we have $y^{\star} \in \partial{\psi^{*}}(K^{\top}F(x^{\star}))$.
Substituting this $y^{\star}$ into the first expression, we get 
\begin{equation}\label{eq:stationary_point}
0 \in F'(x^{\star})^{\top}\partial{\phi_0}(F(x^{\star})) + \partial{\Rc}(x^{\star}).
\end{equation}
Here, we have used $K^{\top}\partial{\psi^{*}}(K^{\top}u) =  \partial{\phi_0}(u)$, where $\phi_0$ is given by \eqref{eq:phi_func}
This inclusion shows that $x^{\star}$ is a stationary point of \eqref{eq:com_nlp}.
In the convex-concave case, under mild assumptions, a KKT point is also a saddle-point of \eqref{eq:min_max_form}.
In particular, if \eqref{eq:com_nlp} is convex, then $x^{\star}$ is also its global optimum of \eqref{eq:com_nlp}. 

However, in practice, we can only find an approximation $(\tilde{x}_0^{*},\tilde{y}^{*}_0)$ of a KKT point $(x^{\star}, y^{\star})$ for \eqref{eq:min_max_form}.
\begin{definition}\label{de:approx_KKT_point}
Given any tolerance $\varepsilon > 0$,  $(\tilde{x}_0^{*},\tilde{y}^{*}_0)$ is called an $\varepsilon$-KKT point of \eqref{eq:min_max_form} if 
\begin{equation}\label{eq:approx_KKT_point}
\begin{array}{ll}
&\qquad\qquad\qquad \Exp{\Ec(\tilde{x}_0^{*}, \tilde{y}_0^{*})} \leq\varepsilon, \vspace{1ex}\\
\text{where}~~\Ec(x, y) &:= \dist{0, F'(x)^{\top}Ky + \partial{\Rc}(x)} + \dist{0, K^{\top}F(x) - \partial{\psi}(y)}.
\end{array}
\end{equation}
\end{definition}
Here, the  expectation is taken overall the randomness from both model \eqref{eq:min_max_form} and the algorithm.
Clearly, if $ \Exp{\Ec(\tilde{x}_0^{*}, \tilde{y}_0^{*})} = 0$, then $(\tilde{x}_0^{*}, \tilde{y}_0^{*})$ is a KKT point of \eqref{eq:min_max_form} as characterized by \eqref{eq:KKT_point}.

\beforesubsec
\subsection{Smoothing techniques}
\aftersubsec
Under Assumption~\ref{ass:A2}, $\phi_0$ defined by \eqref{eq:phi_func} can be nonsmooth.
Hence, we can smooth $\phi_0$ as follows:
\begin{equation}\label{eq:smoothed_phi}
\phi_{\gamma}(u) := \max_{y\in\R^n}\set{ \iprods{u, Ky} - \psi(y) - \gamma b(y) },
\end{equation}
where $b : \dom{\psi}\to\R_{+}$ is a continuously differentiable and $1$-strongly convex function such that $\min_yb(y)  = 0$, and $\gamma > 0$ is a smoothness parameter.
For example, we can choose $b(y) := \frac{1}{2}\norms{y - \dot{y}}^2$ for a fixed $\dot{y}$ or $b(y) := \log(n) + \sum_{j=1}^ny_j\log(y_j)$ defined on a standard simplex $\Delta_n$ \cite{Nesterov2005c}.
Under Assumption~\ref{ass:A2}, $\phi_{\gamma}$ possesses some useful properties as stated in Lemma~\ref{le:properties_of_phi} (Supp. Doc. \ref{apdx:subsec:smooth_properties}).

Let $y^{\ast}_{\gamma}(u)$ be an optimal solution of the maximization problem in \eqref{eq:smoothed_phi}, which always exists and is unique.
In particular, if $b(y) := \frac{1}{2}\norms{y - \dot{y}}^2$, then 
\begin{equation}\label{eq:prox_psi}
y^{*}_{\gamma}(u) := \mathrm{arg}\max_{y\in\R^n}\set{ \iprods{u, Ky} - \psi(y) - \tfrac{\gamma}{2}\norms{y - \dot{y}}^2 } \equiv \prox_{\psi/\gamma}\left(\dot{y} + \gamma^{-1} K^{\top} u \right).
\end{equation}
Hence, when $\psi$ is proximally tractable (i.e., its proximal operator can be computed in a closed-form or with a low-order polynomial time algorithm),  computing $y^{*}_{\gamma}(u)$ reduces to evaluating the proximal operator of $\psi$ as opposed to solving a complex subproblem as in prox-linear methods \cite{tran2020stochastic,zhang2020stochastic}.

Given $\phi_{\gamma}$ defined by \eqref{eq:smoothed_phi}, we consider the following functions:
\begin{equation}\label{eq:smoothed_compositional_func}
\Phi_{\gamma}(x) := \phi_{\gamma}(F(x)) = \phi_{\gamma}\left(\Exps{\xi}{\Fb(x, \xi)}\right)\quad\text{and}\quad \Psi_{\gamma}(x) := \Phi_{\gamma}(x) + \Rc(x).
\end{equation}
In this case, under Assumptions~\ref{ass:A1} and \ref{ass:A2}, $\Phi_{\gamma}$ is continuously differentiable, and 
\begin{equation}\label{eq:deri_Phi}
\nabla{\Phi}_{\gamma}(x) = F'(x)^{\top}\nabla{\phi}_{\gamma}(F(x)) = F'(x)^{\top}Ky_{\gamma}^{*}(F(x)).
\end{equation}
\noindent\textbf{Smoothness:}
Moreover, $\Phi_{\gamma}(\cdot)$ is $L_{\Phi_{\gamma}}$-smooth with $L_{\Phi_{\gamma}} := M_{\phi_{\gamma}}L_F + M_F^2L_{\phi_{\gamma}}$ (see \cite{zhang2019stochastic}), i.e.:
\begin{equation}\label{eq:Phi_smoothness}
\norms{\nabla{\Phi}_{\gamma}(x) - \nabla{\Phi}_{\gamma}(\hat{x})} \leq L_{\Phi_{\gamma}}\norms{x-\hat{x}},~~\forall x,\hat{x}\in\dom{\Psi_0},
\end{equation}
where $M_{\phi_{\gamma}}:= M_{\psi}\norms{K}$ and $L_{\phi_{\gamma}}:= \frac{\norms{K}^2}{\gamma + \mu_{\psi}}$ are given in Lemma~\ref{le:properties_of_phi}.

\noindent\textbf{Gradient mapping:}
Let us recall the following gradient mapping of $\Psi_{\gamma}(\cdot)$ given in \eqref{eq:smoothed_compositional_func} as  
\begin{equation}\label{eq:grad_map}
\Gc_{\eta}(x) := \tfrac{1}{\eta}\left(x - \prox_{\eta\Rc}(x - \eta\nabla{\Phi}_{\gamma}(x))\right), \quad \text{for any $\eta > 0$}.
\hspace{-2ex}
\end{equation}
This mapping will be used to characterize approximate KKT points of \eqref{eq:min_max_form} in Definition~\ref{de:approx_KKT_point}.

\beforesec
\section{The proposed algorithm and its convergence analysis}\label{sec:alg_and_theory}
\aftersec
First, we introduce a stochastic estimator for $\nabla{\Phi_{\gamma}}$.
Then, we develop our main algorithm and analyze its convergence and oracle complexity.
Finally, we show how to construct an $\epsilon$-KKT point of \eqref{eq:min_max_form}.

\beforesubsec
\subsection{Stochastic estimators  and the algorithm}
\aftersubsec
Since $F$ is the expectation of a stochastic function $\Fb$, we exploit the hybrid stochastic estimators for $F$ and its Jacobian $F'$ introduced in \cite{Tran-Dinh2019a}.
More precisely, given a sequence $\set{x_t}$ generated by a stochastic algorithm, our hybrid stochastic estimators $\tilde{F}_t$ and $\tilde{J}_t$ are defined as follows:
\begin{equation}\label{eq:est_update}
\hspace{-0.25ex}
\arraycolsep=0.2em
\left\{\begin{array}{lcl}
\tilde{F}_t & := & \beta_{t-1} \tilde{F}_{t-1} + \frac{\beta_{t-1}}{b_1}\sum_{\xi_i \in \Bc_t^1} \left[ \Fb(x_t,\xi_i) - \Fb(x_{t-1},\xi_i) \right] +  \frac{(1-\beta_{t-1})}{b_2} \sum_{\zeta_i \in \Bc_t^2}\Fb(x_t,\zeta_i) \vspace{1.2ex}\\
\tilde{J}_t & := & \hat{\beta}_{t-1} \tilde{J}_{t-1} +  \frac{\hat{\beta}_{t-1}}{\hat{b}_1}\sum_{\hat{\xi}_i \in \hat{\Bc}_t^1} \big[ \Fb'(x_t, \hat{\xi}_i) - \Fb'(x_{t-1}, \hat{\xi}_i) \big] +  \frac{(1-\hat{\beta}_{t-1})}{\hat{b}_2} \sum_{\hat{\zeta}_i \in \hat{\Bc}_t^2}\Fb'(x_t, \hat{\zeta}_i),
\end{array}\right.
\hspace{-4ex}
\end{equation}
where $\beta_{t-1}, \hat{\beta}_{t-1}\in [0, 1]$ are given weights, and the initial estimators $\tilde{F}_0$ and $\tilde{J}_0$ are defined as
\begin{equation}\label{eq:est_snap_def}
\begin{array}{ll}
\tilde{F}_0 := \frac{1}{b_0} \sum_{\xi_i \in \Bc^0} \Fb(x_0,\xi_i) ~~~~\text{and}~~~~\tilde{J}_0 := \frac{1}{\hat{b}_0} \sum_{\hat{\xi}_i \in \hat{\Bc}^0} \Fb'(x_0, \hat{\xi}_i).
\end{array}
\end{equation}
Here, $\Bc^0$, $\hat{\Bc}^0$, $\Bc^1_t$, $\hat{\Bc}^1_t$, $\Bc^2_t$, and $\hat{\Bc}^2_t$ are mini-batches of sizes $b_0$, $\hat{b}_0$, $b_1$, $\hat{b}_1$, $b_2$, and $\hat{b}_2$, respectively.
We allow $\Bc_t^1$ to be \textbf{correlated} with $\Bc^2_t$, and $\hat{\Bc}^1_t$ to be \textbf{correlated} with $\hat{\Bc}_t^2$.
We also do not require any independence between these mini-batches. 
When $\Bc_t^1 \equiv \Bc_t^2$ and $\hat{\Bc}_t^1 \equiv \hat{\Bc}_t^2$, our estimators reduce the STORM estimators studied in \cite{Cutkosky2019} as a special case.
Clearly, with the choices $\Bc_t^1 \equiv \Bc_t^2$ and $\hat{\Bc}_t^1 \equiv \hat{\Bc}_t^2$, we can save $b_1$ function value evaluations and $\hat{b}_1$ Jacobian evaluations at each iteration. 

For $\tilde{F}_t$ and $\tilde{J}_t$ defined by \eqref{eq:est_update}, we introduce a stochastic estimator for the gradient $\nabla{\Phi_{\gamma}}(x_t) = F'(x_t)^{\top}\nabla{\phi_{\gamma}}(F(x_t))$ of  $\Phi_{\gamma}(\cdot)$ in \eqref{eq:smoothed_compositional_func} at $x_t$ as follows:
\begin{equation}\label{eq:grad_estimators}
v_t :=  \tilde{J}_t^{\top}\nabla{\phi}_{\gamma}(\tilde{F}_t) \equiv  \tilde{J}_t^{\top}Ky^{*}_{\gamma}(\tilde{F}_t).
\end{equation}
To evaluate $v_t$, we need to compute $y^{*}_{\gamma}(\tilde{F}_t)$, which requires just one $\prox_{\gamma\psi}$ if we use \eqref{eq:prox_psi}.
Moreover, due to \eqref{eq:est_snap_def} and \eqref{eq:grad_estimators}, 
evaluating $v_0$ does not require the full matrix $\tilde{J}_0$, but a matrix-vector product $\tilde{J}_0^{\top}Ky^{*}_{\gamma}(\tilde{F}_0)$, which is often cheaper than evaluating $\tilde{J}_0$.

Using the new estimator $v_t$ of $\nabla{\Phi_{\gamma}}(x_t)$ in \eqref{eq:grad_estimators}, we propose Algorithm~\ref{alg:A1} to solve \eqref{eq:min_max_form}.

\begin{algorithm}[hpt!]\caption{(Smoothing Hybrid Variance-Reduced SGD Algorithm for solving \eqref{eq:min_max_form})}\label{alg:A1}
\normalsize
\begin{algorithmic}[1]
   \State{\bfseries Inputs:} An arbitrarily initial point $x_0 \in\dom{\Psi_0}$.
   \vspace{0.5ex}
   \State\hspace{3ex}\label{step:o1} Input $\beta_0, \hat{\beta}_0 \in (0, 1)$, $\gamma_0\geq 0$, $\eta_0 > 0$, and $\theta_0\in (0,1]$ (specified in Subsection~\ref{subsec:convergence}).{\!\!\!}
    \vspace{0.5ex}
    \State{\bfseries Initialization:}\label{step:o2} Generate $\tilde{F}_0$ and  $\tilde{J}_0$ as in \eqref{eq:est_snap_def} with mini-batch sizes $b_0$ and $\hat{b}_0$, respectively.
     \vspace{0.5ex}
   \State\hspace{3ex}\label{step:o2} Solve \eqref{eq:smoothed_phi} to obtain $y^{*}_{\gamma_0}(\tilde{F}_0)$.
   Then, evaluate $v_0 := \tilde{J}_0^{\top}Ky^{*}_{\gamma_0}(\tilde{F}_0)$.
    \vspace{0.5ex}
   \State\hspace{3ex}\label{step:o3} Update $\hat{x}_1 := \prox_{\eta_0\Rc}\left(x_0 - \eta_0v_0\right)$ and $x_1 := (1-\theta_0)x_0 + \theta_0\hat{x}_1$.
    \vspace{0.5ex}
   \State\hspace{0ex}\label{step:o4}{\bfseries For $t := 1,\cdots, T$ do}
    \vspace{0.5ex}
   \State\hspace{3ex}\label{step:i2} Construct $\tilde{F}_t$ and $\tilde{J}_t$ as in \eqref{eq:est_update} and $v_t := \tilde{J}_t^{\top}Ky^{*}_{\gamma_t}(\tilde{F}_t)$, 
   where $y^{*}_{\gamma_t}(\tilde{F}_t)$ solves \eqref{eq:smoothed_phi}.
    \vspace{0.5ex}
   \State\hspace{3ex}\label{step:i4} Update $\hat{x}_{t+1} := \prox_{\eta_t \Rc}\left( x_{t} - \eta_t v_{t}\right)$ and $x_{t+1} := (1-\theta_t)x_t + \theta_t\hat{x}_{t+1}$.
    \vspace{0.5ex}
   \State\hspace{3ex}\label{step:i5} Update $\beta_{t+1}, \hat{\beta}_{t+1}, \theta_{t+1} \in (0,1)$,  $\eta_{t+1} > 0$, and $\gamma_{t+1} \geq 0$ if necessary.
    \vspace{0.5ex}
   \State\hspace{0ex}{\bfseries EndFor}
    \vspace{0.5ex}
   \State\hspace{0ex}\label{step:o5}\textbf{Output:} Choose $\bar{x}_T$ randomly from $\set{x_0, x_1, \cdots, x_T}$ with $\mathbf{Prob}\{\bar{x}_T = x_t\} = \frac{\theta_t/L_{\Phi_{\gamma_t}}}{\sum_{t=0}^T\theta_t/L_{\Phi_{\gamma_t}}}$. 
\end{algorithmic}
\end{algorithm}

Algorithm~\ref{alg:A1} is designed by adopting the idea in \cite{Tran-Dinh2019a}, where it can start from two initial batches $\Bc^0$ and $\hat{\Bc}^0$ to generate a good approximation for the search direction $v_0$ before getting into the main loop.
But if diminishing step-sizes are use, it does not require such initial batchs.
However, it has $3$ major differences compared to  \cite{Tran-Dinh2019a}: the dual step $y_{\gamma_t}^{*}(\tilde{F}_t)$, the estimator $v_t$, and the dynamic parameter updates.
Note that, as explained in \eqref{eq:prox_psi}, since the dual step $y_{\gamma_t}^{*}(\tilde{F}_t)$ can be computed using $\prox_{\gamma \psi}$,  Algorithm~\ref{alg:A1} is single loop, making it easy to implement in practice compared to methods based on SVRG \cite{johnson2013accelerating} and SARAH \cite{nguyen2017sarah} such as \cite{luo2020stochastic,zhang2019multi}. 

\beforesubsec
\subsection{Convergence analysis of Algorithm~\ref{alg:A1}}\label{subsec:convergence}
\aftersubsec
Let  $\Fc_t$ be the $\sigma$-field generated by Algorithm~\ref{alg:A1} up to the $t$-th iteration, which is defined as follows:
\begin{equation}\label{eq:filtration}
\Fc_t := \sigma\big(x_0, \Bc^0,\hat{\Bc}^0, \Bc_1^1, \hat{\Bc}_1^1, \Bc_1^2, \hat{\Bc}_1^2, \cdots, \Bc^1_{t}, \hat{\Bc}_{t}^1, \Bc_{t}^2, \hat{\Bc}_{t}^2\big).
\end{equation}
If $\psi$ is strongly convex, then, without loss of generality, we can assume $\mu_{\psi} := 1$.
Otherwise, we can rescale it.
Moreover, for the sake of our presentation, for a given $c_0 > 0$, we introduce:
\begin{equation}\label{eq:constant_defs}
\hspace{-0ex}
\arraycolsep=0.05em
\begin{array}{lclclcl}
P &:= & \frac{\sqrt{26}\norms{K}}{3\sqrt{c_0}}\sqrt{\kappa M_F^4\norms{K}^2  + c_0\hat{\kappa} L_F^2M_{\psi}^2}, &  & Q &:=& \frac{26}{9c_0}\norms{K}^2\big(\kappa M_F^4\norms{K}^2\sigma_F^2 +  c_0\hat{\kappa} M_{\psi}^2\sigma_J^2\big), \vspace{1ex}\\
 L_{\Phi_0} &:= & L_FM_{\psi}\norms{K}  + M_F^2\norms{K}^2, &{\hspace{-3ex}}\text{and}~~ & L_{\Phi_{\gamma}} & := & L_FM_{\psi}\norms{K} +  \frac{M_F^2\norms{K}^2}{\gamma},
\end{array}
\hspace{-3ex}
\end{equation}
where $\gamma > 0$, $M_F$, $L_F$, $\sigma_F$, and $\sigma_J$ are given in Assumption~\ref{ass:A1} and $M_{\psi}$ is in Assumption~\ref{ass:A2}.
Here, $\kappa := 1$ if the mini-batch $\Bc_t^1$ is independent of $\Bc_t^2$, and $\kappa := 2$, otherwise.
Similarly, $\hat{\kappa} := 1$ if $\hat{\Bc}_t^1$ is independent of $\hat{\Bc}_t^2$, and $\hat{\kappa} := 2$, otherwise.

\subsubsection{The strongly concave case}
Theorem~\ref{th:convergence2_scvx}, whose proof is in Supp. Doc.~\ref{apdx:subsec:th:convergence2_scvx}, analyzes convergence rate and complexity of Algorithm~\ref{alg:A1} for the smooth case of $\phi_0$ in \eqref{eq:com_nlp} (i.e., $\psi$ is strongly convex).

\begin{theorem}[\textbf{Constant step-size}]\label{th:convergence2_scvx}
Suppose that Assumptions~\ref{ass:A1} and \ref{ass:A2} hold, $\psi$ is $\mu_{\psi}$-strongly convex with $\mu_{\psi} := 1$, and $P$, $Q$, and $L_{\Phi_0}$ are defined in \eqref{eq:constant_defs}.
Given a mini-batch $0 < b \leq \hat{b}_0(T+1)$, let $b_0 := c_0\hat{b}_0$, $\hat{b}_1 = \hat{b}_2 := b$, and $b_1 = b_2 := c_0b$.
Let $\sets{x_t}_{t=0}^T$ be generated by Algorithm~\ref{alg:A1} using
\begin{equation}\label{eq:para_config0}
\hspace{-0.5ex}
\begin{array}{l}
\gamma_t := 0, ~~ \beta_t = \hat{\beta}_t := 1 - \frac{b^{1/2}}{[\hat{b}_0(T+1)]^{1/2}}, ~~  \theta_t =  \theta :=  \frac{L_{\Phi_0} b^{3/4}}{P [\hat{b}_0(T+1)]^{1/4}}, ~~\text{and}~~ \eta_t  = \eta := \frac{2}{L_{\Phi_0}(3 + \theta)},
\end{array}
\hspace{-2ex}
\end{equation}
provided that $\frac{\hat{b}_0(T+1)}{b^3} > \frac{L_{\Phi_0}^4}{P^4}$. 
Let $b_0 := c_1^2[b(T+1)]^{1/3}$ for some $c_1 > 0$.
Then, we have
\begin{equation}\label{eq:convergence_rate1_b}
\Exp{\norms{\Gc_{\eta}(\bar{x}_T)}^2} \leq \frac{\Delta_0}{[b(T+1)]^{2/3}}, \quad\text{where}\quad \Delta_0  :=  16P \sqrt{c_1}\big[\Psi_0(x_0) - \Psi_0^{\star}\big]  +  \frac{24Q}{c_1}.
\end{equation}
For a given tolerance $\varepsilon > 0$, the total number of iterations $T$ to obtain $\Exp{\norms{\Gc_{\eta}(\bar{x}_T)}^2} \leq \varepsilon^2$ is at most $T := \big\lfloor \frac{\Delta_0^{3/2}}{b\varepsilon^3} \big\rfloor$.
The total numbers of function evaluation $\Fb(x_t, \xi)$ and its Jacobian evaluations $\Fb'(x_t,\xi)$ are at most $\Tc_{F} := \big\lfloor \frac{c_0c_1^2\Delta_0^{1/2}}{\varepsilon} + \frac{3c_0\Delta_0^{3/2}}{\varepsilon^3}\big\rfloor$ and $\Tc_{J} := \big\lfloor \frac{c_1^2\Delta_0^{1/2}}{\varepsilon} + \frac{3\Delta_0^{3/2}}{\varepsilon^3}\big\rfloor$, respectively.
\end{theorem}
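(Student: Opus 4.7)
The plan is to build a Lyapunov (potential) function that combines $\Psi_0(x_t)-\Psi_0^{\star}$ with a variance proxy for the hybrid estimators, then show this Lyapunov contracts enough per iteration to be telescoped into the advertised rate. Concretely, the first step is a standard ``inexact descent lemma'' for the composite smooth problem \eqref{eq:com_nlp} with $\gamma_t=0$: since $\nabla\Phi_0$ is $L_{\Phi_0}$-Lipschitz, the proximal-gradient-like step $\hat{x}_{t+1}=\prox_{\eta\Rc}(x_t-\eta v_t)$ followed by the convex combination $x_{t+1}=(1-\theta)x_t+\theta\hat{x}_{t+1}$ yields an inequality of the form
\begin{equation*}
\Psi_0(x_{t+1})\le \Psi_0(x_t)-\tfrac{\theta\eta}{2}\,\norms{\Gc_\eta(x_t)}^2+\tfrac{\theta\eta}{2}\,\norms{v_t-\nabla\Phi_0(x_t)}^2,
\end{equation*}
after choosing $\eta=2/[L_{\Phi_0}(3+\theta)]$ as prescribed in \eqref{eq:para_config0}. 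This is a direct consequence of smoothness plus firm non-expansiveness of $\prox_{\eta\Rc}$, and it is the only place the step-size condition is used.

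The second step is to control the mean-square estimator error $E_t:=\Exp{\norms{\tilde{F}_t-F(x_t)}^2}$ and $\hat{E}_t:=\Exp{\norms{\tilde{J}_t-F'(x_t)}^2}$. Using the hybrid update \eqref{eq:est_update} together with Assumption~\ref{ass:A1}(a)--(b) and the fact that $\beta_{t-1}$-weighted inner cross-terms cancel in expectation (the signature feature of the hybrid estimator from \cite{Tran-Dinh2019a}), one obtains recursions of the shape
\begin{equation*}
E_t\le \beta_{t-1}^2 E_{t-1}+\tfrac{\kappa\,\beta_{t-1}^2 L_F^2}{b_1}\Exp{\norms{x_t-x_{t-1}}^2}+\tfrac{(1-\beta_{t-1})^2\sigma_F^2}{b_2},
\end{equation*}
and the analogous bound for $\hat{E}_t$ with $(L_F,\sigma_F,\kappa,b_1,b_2)$ replaced by $(L_F,\sigma_J,\hat\kappa,\hat{b}_1,\hat{b}_2)$. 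The extra $\kappa,\hat\kappa\in\{1,2\}$ accounts for whether the inner and outer mini-batches coincide, which is exactly why these constants appear in \eqref{eq:constant_defs}.

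The third step, which I expect to be the main technical obstacle, is to translate $(E_t,\hat{E}_t)$ into a bound on $\Exp{\norms{v_t-\nabla\Phi_0(x_t)}^2}$. Writing $v_t-\nabla\Phi_0(x_t)=\tilde{J}_t^{\top}K y^{*}(\tilde{F}_t)-F'(x_t)^{\top}K y^{*}(F(x_t))$ and splitting into two pieces, the first is bounded by $M_\psi^2\norms{K}^2\hat{E}_t$ using $\norms{y^{*}(\tilde{F}_t)}\le M_\psi$, while the second uses $L_{\phi_0}$-smoothness of $\phi_0$ (which under $\mu_\psi=1$ yields $\norms{y^{*}(u)-y^{*}(\hat u)}\le\norms{K}\norms{u-\hat u}$) to give at most $M_F^2\norms{K}^4 E_t$. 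Substituting $\norms{x_t-x_{t-1}}^2$ into the recursions and grouping with the descent inequality yields a one-step Lyapunov decrease whose coefficients match the choices $\beta_t=\hat\beta_t=1-b^{1/2}/[\hat{b}_0(T+1)]^{1/2}$ and $\theta=L_{\Phi_0}b^{3/4}/(P[\hat{b}_0(T+1)]^{1/4})$; this is where the constants $P$ and $Q$ in \eqref{eq:constant_defs} crystallize.

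Finally, I would telescope the Lyapunov inequality from $t=0$ to $T$, absorb the initial-batch error $\Exp{\norms{v_0-\nabla\Phi_0(x_0)}^2}\le \tfrac{P^2}{c_1^2[b(T+1)]^{1/3}}\cdot\text{const}$ using $b_0=c_1^2[b(T+1)]^{1/3}$ and $\hat{b}_0$, and divide by $\sum_{t=0}^{T}\theta_t/L_{\Phi_0}$ so that the randomized output rule in Step~\ref{step:o5} of Algorithm~\ref{alg:A1} produces $\Exp{\norms{\Gc_\eta(\bar{x}_T)}^2}$ on the left-hand side. Plugging in \eqref{eq:para_config0} turns the bound into $\Delta_0/[b(T+1)]^{2/3}$ with the stated $\Delta_0$. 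The complexity statement then follows by inverting: setting the right-hand side to $\varepsilon^2$ gives $T=\lfloor \Delta_0^{3/2}/(b\varepsilon^3)\rfloor$, and adding the one-off cost $b_0,\hat{b}_0=\BigO{[b(T+1)]^{1/3}}=\BigO{\varepsilon^{-1}}$ to the per-iteration cost $b,\hat{b}$ yields $\Tc_F$ and $\Tc_J$. The most delicate accounting is ensuring that the cubed-root scaling of $b_0$ with $b(T+1)$ matches the scaling of the telescoped variance term so no polylog or extra $b^{1/3}$ factor appears; this is the balance that justifies the specific exponents in \eqref{eq:para_config0}.
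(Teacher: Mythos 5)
Your proposal follows essentially the same route as the paper: an inexact descent lemma for $\Psi_0$ driven by $\Exp{\norms{v_t-\nabla\Phi_0(x_t)}^2}$, the hybrid-estimator variance recursions for $\tilde F_t$ and $\tilde J_t$ with the $\kappa,\hat\kappa$ correction for correlated mini-batches, the splitting $v_t-\nabla\Phi_0(x_t)=(\tilde J_t-F'(x_t))^{\top}Ky^{*}(\tilde F_t)+F'(x_t)^{\top}K(y^{*}(\tilde F_t)-y^{*}(F(x_t)))$ bounded via $M_{\psi}\norms{K}$ and the $\norms{K}^2$-Lipschitzness of $y^{*}$, and a telescoped Lyapunov function $\Psi_0(x_t)+\frac{\alpha_t}{2}E_t+\frac{\hat\alpha_t}{2}\hat E_t$ whose coefficients force exactly the choices of $\beta$, $\theta$, and $\eta$ in \eqref{eq:para_config0} before dividing by $\sum_t\theta_t/L_{\Phi_0}$ and inverting for the complexity. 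This is precisely the paper's chain (Lemmas~\ref{lem:F_J_est_var}, \ref{lem:vt_var}, \ref{le:descent_property} and Theorem~\ref{thm:comp_D1} specialized to $\gamma_t=0$), so the approach is correct and not materially different.
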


Theorem~\ref{th:convergence2_scvx_diminishing} states convergence of Algorithm~\ref{alg:A1} using diminishing step-size (see Supp. Doc.~\ref{apdx:subsec:th:convergence2_scvx_diminishing}).

\begin{theorem}[\textbf{Diminishing step-size}]\label{th:convergence2_scvx_diminishing}
Suppose that Assumptions~\ref{ass:A1} and \ref{ass:A2} hold, $\psi$ is $\mu_{\psi}$-strongly convex with $\mu_{\psi} := 1$ $($i.e., $\phi_0$ in \eqref{eq:com_nlp} is smooth$)$.
Let $\sets{x_t}_{t=0}^T$ be generated by Algorithm~\ref{alg:A1} using the mini-batch sizes as in Theorem~\ref{th:convergence2_scvx}, and increasing weight and diminishing step-sizes as
\begin{equation}\label{eq:para_config0_a0}
\hspace{-2ex}
\begin{array}{l}
\gamma_t := 0, \quad \beta_t = \hat{\beta}_t := 1 - \frac{1}{(t+2)^{2/3}},\quad \theta_t :=  \frac{L_{\Phi_0}\sqrt{b}}{P(t+2)^{1/3}},\quad\text{and}\quad \eta_t :=  \frac{2}{L_{\Phi_0}(3 + \theta_t)}.
\end{array}
\hspace{-2ex}
\end{equation}
Then, for all $T \geq 0$, and $(\bar{x}_T,\bar{\eta}_T)$ chosen as $\mathbf{Prob}\big\{ \Gc_{\bar{\eta}_T}(\bar{x}_T) = \Gc_{\eta_t}(x_t) \big\} = \frac{\theta_t}{\sum_{t=0}^T\theta_t}$, we have
\begin{equation}\label{eq:convergence_rate1_b_a0} 
\arraycolsep=0.2em
\hspace{-1ex}
\begin{array}{lcl}
\Exp{\norms{\Gc_{\bar{\eta}_T}(\bar{x}_T)}^2}  \leq  \frac{32P [ \Psi_0(x_0)-\Psi^{\star}_0 ] }{3\sqrt{b}\big[(T+3)^{2/3} - 2^{2/3}\big]}   +  \frac{32 Q}{3\big[(T+3)^{2/3} - 2^{2/3}\big]}\left[\frac{2^{1/3}}{\hat{b}_0} + \frac{2(1 + \log(T+1))}{b}\right].
\end{array}
\hspace{-4ex}
\end{equation}
\end{theorem}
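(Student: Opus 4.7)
My plan is to follow the same three-step blueprint as the constant step-size proof of Theorem~\ref{th:convergence2_scvx}, now adapted to $t$-dependent parameters. Because $\gamma_t = 0$ throughout and $\psi$ is $1$-strongly convex, $\phi_0 = \psi^{*}(K^{\top}\cdot)$ is already $\norms{K}^2$-smooth (Lemma~\ref{le:properties_of_phi}), so $\Phi_0$ is $L_{\Phi_0}$-smooth with $L_{\Phi_0}$ as in \eqref{eq:constant_defs} and no extra smoothing analysis is required. The three ingredients are: (i) a per-iteration descent inequality for $\Psi_0$, (ii) a recursion for the hybrid estimator error $E_t := \Exp{\norms{v_t - \nabla\Phi_0(x_t)}^2}$, and (iii) a Lyapunov-function telescoping whose $\theta_t$-weighted average yields \eqref{eq:convergence_rate1_b_a0}.

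For ingredient (i), I would apply the standard prox-gradient inequality to $\hat{x}_{t+1} = \prox_{\eta_t\Rc}(x_t - \eta_t v_t)$, combine with $L_{\Phi_0}$-smoothness of $\Phi_0$, and average through $x_{t+1} = (1-\theta_t)x_t + \theta_t\hat{x}_{t+1}$; with the choice $\eta_t = 2/[L_{\Phi_0}(3 + \theta_t)]$, the result takes the form $\Psi_0(x_{t+1}) \leq \Psi_0(x_t) - \tfrac{c_1\theta_t}{L_{\Phi_0}}\norms{\Gc_{\eta_t}(x_t)}^2 + \tfrac{c_2\theta_t}{L_{\Phi_0}}\norms{v_t - \nabla\Phi_0(x_t)}^2$ for absolute constants $c_1, c_2 > 0$. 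For ingredient (ii), starting from $v_t - \nabla\Phi_0(x_t) = (\tilde{J}_t - F'(x_t))^{\top}Ky_0^{*}(\tilde{F}_t) + F'(x_t)^{\top}K\bigl[y_0^{*}(\tilde{F}_t) - y_0^{*}(F(x_t))\bigr]$ and invoking the $\norms{K}^2$-Lipschitzness of $y_0^{*}$ together with the hybrid-estimator recursions of \cite{Tran-Dinh2019a} applied to $\tilde{F}_t$ and $\tilde{J}_t$, I would obtain $E_{t+1} \leq \beta_t^2 E_t + \tfrac{P^2}{2}\Exp{\norms{x_{t+1} - x_t}^2} + \tfrac{Q(1-\beta_t)^2}{b}$, where $P$ and $Q$ are as in \eqref{eq:constant_defs} and $\norms{x_{t+1} - x_t}^2 = \theta_t^2\eta_t^2\norms{\Gc_{\eta_t}(x_t)}^2$.

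For ingredient (iii), I would define $V_t := \Psi_0(x_t) - \Psi_0^{\star} + \omega_t E_t$ with $\omega_t \asymp (t+2)^{1/3}$, chosen so that $\omega_{t+1}\beta_t^2 \leq \omega_t$ with $\beta_t = 1 - (t+2)^{-2/3}$ and so that the cross term $\tfrac{c_2\theta_t}{L_{\Phi_0}} E_t$ is absorbed by the potential. Summing $V_{t+1} \leq V_t - \tfrac{c_3\theta_t}{L_{\Phi_0}}\Exp{\norms{\Gc_{\eta_t}(x_t)}^2} + R_t$ from $t=0$ to $T$ with $R_t \asymp \omega_{t+1}(1-\beta_t)^2/b$, and using $E_0 = \mathcal{O}(1/\hat{b}_0)$ from \eqref{eq:est_snap_def}, the estimate $\sum_{t=0}^T \omega_{t+1}(1-\beta_t)^2 = \mathcal{O}\bigl(\sum_t (t+2)^{-1}\bigr) = \mathcal{O}(1 + \log(T+1))$, and the integral-comparison bound $\sum_{t=0}^T \theta_t \geq \tfrac{3L_{\Phi_0}\sqrt{b}}{2P}\bigl[(T+3)^{2/3} - 2^{2/3}\bigr]$, telescoping produces a weighted inequality that, after dividing by $\sum_t\theta_t$ and applying the sampling rule $\mathbf{Prob}\{(\bar{x}_T,\bar{\eta}_T) = (x_t,\eta_t)\} = \theta_t/\sum_s\theta_s$, reduces to \eqref{eq:convergence_rate1_b_a0}.

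The main obstacle is the simultaneous tuning of $\omega_t$, $\beta_t$, $\theta_t$, and $\eta_t$: the Lyapunov decrease requires $\omega_{t+1}\beta_t^2$ plus a descent cross-coupling to lie below $\omega_t$ uniformly in $t$, while at the same time the residual $\sum_t\omega_{t+1}(1-\beta_t)^2$ must grow only logarithmically; the exponents $2/3$ and $1/3$ in \eqref{eq:para_config0_a0} are precisely what make this balance feasible, and any other pairing would either spoil the decrease or inflate the noise term. The remaining work (matching the emerging constants with the stated $P$, $Q$, $L_{\Phi_0}$, and performing the random-iterate conversion to $(\bar{x}_T, \bar{\eta}_T)$) is bookkeeping entirely analogous to the constant step-size calculations in the proof of Theorem~\ref{th:convergence2_scvx}.
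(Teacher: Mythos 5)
Your proposal is correct and follows essentially the same route as the paper's proof: the paper likewise combines a descent lemma, the hybrid-estimator error recursions, and a Lyapunov function whose estimator-error weight grows like $(t+2)^{1/3}$ (its $\alpha_t=\Theta_t/\sqrt{1-\beta_t}$), and then uses exactly your two integral-comparison bounds $\sum_t (t+2)^{-1/3}\geq \tfrac{3}{2}\big[(T+3)^{2/3}-2^{2/3}\big]$ and $\sum_t (t+3)^{1/3}(t+2)^{-4/3}\leq 1+\log(T+1)$. One small caveat: the contraction recursion holds for the two estimator errors $\Exp{\norms{\tilde{F}_t-F(x_t)}^2}$ and $\Exp{\norms{\tilde{J}_t-F'(x_t)}^2}$ separately (the paper tracks them with proportional weights $\alpha_t,\hat{\alpha}_t$), not for $E_t=\Exp{\norms{v_t-\nabla\Phi_0(x_t)}^2}$ itself, since the variance bound relating $E_t$ to those errors goes only one way; defining $E_t$ instead as the corresponding weighted sum of the two estimator errors repairs this without changing anything else in your argument.
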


If we set $b = \hat{b}_0 = 1$, then our convergence rate is $\BigO{\frac{\log(T)}{T^{2/3}}}$ with a $\log(T)$ factor slower than \eqref{eq:convergence_rate1_b}.
However, it does not require a large initial mini-batch $\hat{b}_0$ as in Theorem~\ref{th:convergence2_scvx}.
In Theorems~\ref{th:convergence2_scvx} and \ref{th:convergence2_scvx_diminishing}, we do not need to smooth $\phi_0$. 
Hence, $\gamma_t$ is absent in Algorithm~\ref{alg:A1}, i.e., $\gamma_t = 0$ for $t\geq 0$.


\subsubsection{The non-strongly concave case}
Now, we consider the case $\mu_{\psi}  = 0$, i.e., $\psi$ is non-strongly convex (or equivalently, \eqref{eq:min_max_form} is non-strongly concave in $y$), leading to the nonsmoothness of $\phi_0$ in \eqref{eq:com_nlp}.
Theorem~\ref{th:convergence2} states convergence of Algorithm~\ref{alg:A1} in this case, whose proof is in Supp. Doc.~\ref{apdx:subsec:th:convergence2}.

\begin{theorem}[\textbf{Constant step-size}]\label{th:convergence2}
Assume that Assumptions~\ref{ass:A1} and \ref{ass:A2} hold, $\psi$ in \eqref{eq:min_max_form} is non-strongly convex $($i.e., $\phi_0$ is nonsmooth$)$, and $P$, $Q$, and $L_{\Phi_{\gamma}}$ are defined in \eqref{eq:constant_defs}.
Let $b$ and $\hat{b}_0$ be two positive integers, $c_0 > 0$, and $\sets{x_t}_{t=0}^T$ be generated by Algorithm~\ref{alg:A1} after $T$ iterations using:
\begin{equation}\label{eq:choice_of_para_ncvx}
\hspace{-0.25ex}
\arraycolsep=0.1em
\left\{\begin{array}{ll}
&\hat{b}_1 = \hat{b}_2 := b, \quad b_1 = b_2 := \frac{c_0b}{\gamma^2}, \quad \hat{b}_0 := c_1^2[b(T+1)]^{1/3}, \quad  b_0 := \frac{c_0\hat{b}_0}{\gamma^2}, \quad \gamma_t := \gamma \in (0,1], \vspace{1ex}\\
&\beta_t = \hat{\beta}_t = 1 - \frac{b^{1/2}}{[\hat{b}_0(T+1)]^{1/2}},  \quad \theta_t =  \theta :=  \frac{L_{\Phi_{\gamma}} b^{3/4}}{P [\hat{b}_0(T+1)]^{1/4}}, \quad\text{and}\quad \eta_t  = \eta := \frac{2}{L_{\Phi_{\gamma}}(3 + \theta)}.
\end{array}\right.
\hspace{-3ex}
\end{equation}
Then, with $B_{\psi}$ defined in Lemma~\ref{le:properties_of_phi},  the following bound holds
\begin{equation}\label{eq:key_est5}
\Exp{\norms{\Gc_{\eta}(\bar{x}_T)}^2} \leq \frac{\hat{\Delta}_0}{[b(T+1)]^{2/3}}, 
\quad\text{where}\quad 
\hat{\Delta}_0 := 16\sqrt{c_1}P\big( \Psi_{0}(x_0) \! - \! \Psi^{\star}_0 \! + \! B_{\psi}\big) + \frac{24Q}{c_1}.
\end{equation}
The total number of iterations $T$ to achieve $\Exp{\norms{\Gc_{\eta}(\bar{x}_T)}^2}\leq\varepsilon^2$ is at most $T := \big\lfloor\frac{\hat{\Delta}_0^{3/2}}{b\varepsilon^3}\big\rfloor$.
The total numbers of function evaluations $\Tc_F$ and Jacobian evaluations $\Tc_J$ are respectively at most
\begin{equation*}
\begin{array}{l}
\Tc_F := \frac{c_0\hat{\Delta}_0^{1/2}}{\gamma^2\varepsilon} + \frac{3c_0\hat{\Delta}_0^{3/2}}{\gamma^2\varepsilon^3} = \mathcal{O}\Big( \frac{\hat{\Delta}_0^{3/2}}{\gamma^2\varepsilon^{3}} \Big)
\quad\text{and}\quad
\Tc_J :=  \frac{\hat{\Delta}_0^{1/2}}{\varepsilon} + \frac{3\hat{\Delta}_0^{3/2}}{\varepsilon^{3}} = \BigO{\frac{\hat{\Delta}_0^{1.5}}{\varepsilon^{3}}}.
\end{array}
\end{equation*}
If we choose $\gamma := c_2\varepsilon$ for some $c_2 > 0$, then $\Tc_F =  \frac{c_0\hat{\Delta}_0^{1/2}}{c_2^2\varepsilon^3} + \frac{3c_0\hat{\Delta}_0^{3/2}}{c_2^2\varepsilon^5}  = \mathcal{O}\big( \frac{\hat{\Delta}_0^{3/2}}{\varepsilon^{5}} \big)$.
\end{theorem}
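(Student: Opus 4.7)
The plan is to reduce Theorem~\ref{th:convergence2} to essentially the same analysis as Theorem~\ref{th:convergence2_scvx} applied to the smoothed surrogate $\min_{x}\Psi_{\gamma}(x)$ from \eqref{eq:smoothed_compositional_func}, while keeping careful track of (i) the smoothing error that enters through $\Psi_{\gamma}(x_0) - \Psi_{\gamma}^{\star}$ and (ii) the inflation of the inner mini-batches by a factor $1/\gamma^2$ that is needed to compensate for the blow-up of $L_{\Phi_{\gamma}}$ as $\gamma \to 0$.

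First, I would establish a descent-type inequality on $\Psi_{\gamma}$. Using the $L_{\Phi_{\gamma}}$-smoothness stated in \eqref{eq:Phi_smoothness} together with the nonexpansiveness of $\tprox{\eta_t\Rc}{\cdot}$ applied to the updates $\hat{x}_{t+1} = \tprox{\eta_t\Rc}{x_t - \eta_tv_t}$ and $x_{t+1} = (1-\theta_t)x_t + \theta_t\hat{x}_{t+1}$, one obtains, after rearrangement, an inequality of the form
$$
\Exp{\Psi_{\gamma}(x_{t+1})} \leq \Exp{\Psi_{\gamma}(x_t)} - \tfrac{\theta_t\eta_t}{2}\Exp{\norms{\Gc_{\eta_t}(x_t)}^2} + \tfrac{\theta_t\eta_t}{2}\Exp{\norms{v_t - \nabla\Phi_{\gamma}(x_t)}^2},
$$
up to lower-order terms from the $L_{\Phi_{\gamma}}$-quadratic upper bound. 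Telescoping over $t=0,\ldots,T$ controls the residual $\sum_t\theta_t\eta_t\Exp{\norms{\Gc_{\eta_t}(x_t)}^2}$ by $\Psi_{\gamma}(x_0) - \Psi_{\gamma}^{\star}$ plus the accumulated variance.

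Second, and this is the main technical step, I would bound the variance $\Exp{\norms{v_t - \nabla\Phi_{\gamma}(x_t)}^2}$ of the hybrid estimator $v_t = \tilde{J}_t^{\top}Ky_{\gamma}^{*}(\tilde{F}_t)$. Splitting the error into a Jacobian component, a function component, and a cross term, and using that $\nabla\phi_{\gamma}$ is $L_{\phi_{\gamma}} = \norms{K}^2/\gamma$-Lipschitz from Lemma~\ref{le:properties_of_phi} (since now $\mu_{\psi}=0$), the hybrid recursion from \cite{Tran-Dinh2019a} yields schematically
$$
\Exp{\norms{v_t - \nabla\Phi_{\gamma}(x_t)}^2} \leq \beta_{t-1}^2\Exp{\norms{v_{t-1} - \nabla\Phi_{\gamma}(x_{t-1})}^2} + \BigO{\tfrac{L_{\phi_{\gamma}}^2}{b_1}\norms{x_t-x_{t-1}}^2} + \BigO{(1-\beta_{t-1})^2\big(\tfrac{\sigma_F^2}{b_2} + \tfrac{\sigma_J^2}{\hat{b}_2}\big)}.
$$
The factor $L_{\phi_{\gamma}}^2 = \norms{K}^4/\gamma^2$ is precisely what is cancelled by the inflated choice $b_1 = b_2 = c_0 b/\gamma^2$ in \eqref{eq:choice_of_para_ncvx}, so that after substitution all effective constants collapse back onto the $\gamma$-independent quantities $P$ and $Q$ of \eqref{eq:constant_defs}.

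Third, I would combine the descent inequality with the variance recursion, plug in the parameters \eqref{eq:choice_of_para_ncvx}, and invoke Lemma~\ref{le:properties_of_phi} to bound the smoothing-induced gap by $\Psi_{\gamma}(x_0) - \Psi_{\gamma}^{\star} \leq \Psi_0(x_0) - \Psi_0^{\star} + B_{\psi}$ (using $\gamma \leq 1$ and $|\phi_{\gamma} - \phi_0| \leq \gamma B_{\psi}$). Using the output rule of Step~\ref{step:o5} in Algorithm~\ref{alg:A1} to turn the telescoped sum into $\Exp{\norms{\Gc_{\eta}(\bar{x}_T)}^2}$ then yields \eqref{eq:key_est5} with the announced $\hat{\Delta}_0$. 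Inverting $\hat{\Delta}_0/[b(T+1)]^{2/3} \leq \varepsilon^2$ gives $T = \round{\hat{\Delta}_0^{3/2}/(b\varepsilon^3)}$; summing the per-iteration cost $b_1 + b_2 = 2c_0b/\gamma^2$ for $\Fb$ and $\hat{b}_1+\hat{b}_2 = 2b$ for $\Fb'$, together with the one-time $b_0$ and $\hat{b}_0$, produces the stated $\Tc_F$ and $\Tc_J$. The final choice $\gamma = c_2\varepsilon$ trades one factor of $\varepsilon^{-2}$ in $\Tc_F$ against leaving $\Tc_J = \BigO{\varepsilon^{-3}}$, matching the best known complexity.

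The main obstacle will be the variance bound in step two: the composite estimator $v_t$ couples two noise sources through the $(1/\gamma)$-Lipschitz map $u \mapsto K y_{\gamma}^{*}(u)$, and a naive splitting picks up an unavoidable $1/\gamma^2$ factor. Showing that this $1/\gamma^2$ factor is exactly (and only) cancelled by the inflated batch sizes $b_1, b_2 = c_0 b/\gamma^2$, while the surviving constants reassemble into the same $P$ and $Q$ that appeared in the strongly concave analysis, is the delicate bookkeeping underlying the theorem.
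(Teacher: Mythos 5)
Your proposal follows essentially the same route as the paper: smooth $\phi_0$ with a fixed $\gamma$, run the hybrid variance-reduced descent analysis on $\Psi_{\gamma}$, cancel the $L_{\phi_{\gamma}}^2 = \norms{K}^4/\gamma^2$ blow-up with the inflated batches $b_1 = b_2 = c_0 b/\gamma^2$ so that the constants collapse back to the $\gamma$-independent $P$ and $Q$ of \eqref{eq:constant_defs}, and absorb the smoothing bias into the $B_{\psi}$ term of $\hat{\Delta}_0$ via $\phi_{\gamma} \leq \phi_0 \leq \phi_{\gamma} + \gamma B_{\psi}$ and $\gamma \leq 1$. The one refinement the paper makes is that the recursion is carried on the component errors $\Exp{\norms{\tilde{F}_t - F(x_t)}^2}$ and $\Exp{\norms{\tilde{J}_t - F'(x_t)}^2}$ separately, combined through a Lyapunov function and the pointwise bound $\norms{v_t - \nabla{\Phi}_{\gamma}(x_t)}^2 \leq 2M_F^2 L_{\phi_{\gamma}}^2\norms{\tilde{F}_t - F(x_t)}^2 + 2M_{\phi_{\gamma}}^2\norms{\tilde{J}_t - F'(x_t)}^2$, rather than directly on $\norms{v_t - \nabla{\Phi}_{\gamma}(x_t)}^2$, which is a nonlinear composite of the two estimators and does not itself satisfy the STORM-type recursion you write schematically.
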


Alternatively, we can also establish convergence and estimate the complexity of Algorithm~\ref{alg:A1} with diminishing step-size in Theorem~\ref{th:nonsmooth_diminishing}, whose proof is in Supp. Doc.~\ref{apdx:subsec:th:convergence2_diminishing}.

\begin{theorem}[\textbf{Diminishing step-size}]\label{th:nonsmooth_diminishing}
Suppose that Assumptions~\ref{ass:A1} and \ref{ass:A2} hold, $\psi$ is non-strongly convex $($i.e., $\phi_0$ is possibly nonsmooth$)$, and $P$, $Q$, $L_{\Phi_{\gamma_t}}$ are defined by \eqref{eq:constant_defs}.
Given mini-batch sizes $b > 0$ and $\hat{b}_0 > 0$, let $b_0 := \frac{c_0\hat{b}_0}{\gamma_0^2}$, $b_1^t = b_2^t := \frac{c_0b}{\gamma_{t}^2}$, and $\hat{b}_1 = \hat{b}_2 := b$ for some $c_0 > 0$.
Let $\sets{x_t}_{t=0}^T$ be generated by Algorithm~\ref{alg:A1} using increasing weight and diminishing step-sizes as
\begin{equation}\label{eq:para_config0_a2}
\begin{array}{l}
\gamma_t := \frac{1}{(t+2)^{1/3}}, \quad
\beta_t = \hat{\beta}_t := 1 - \frac{1}{(t+2)^{2/3}},\quad 
\theta_t :=  \frac{L_{\Phi_{\gamma_t}}b^{1/2}}{P(t+2)^{1/3}},
\quad\text{and}\quad \eta_t :=  \frac{2}{L_{\Phi_{\gamma_t}}(3 + \theta_t)}.
\end{array}
\hspace{-2ex}
\end{equation}
For $(\bar{x}_T,\bar{\eta}_T)$ chosen as $\mathbf{Prob}\big\{ \Gc_{\bar{\eta}_T}(\bar{x}_T) \!=\! \Gc_{\eta_t}(x_t) \big\} \!=\! \big[\sum_{t=0}^T (\theta_t/L_{\Phi_{\gamma_t}})\big]^{-1}(\theta_t/L_{\Phi_{\gamma_t}})$, we have{\!\!}
\begin{equation}\label{eq:convergence_rate1_b_a2} 
\hspace{0ex}
\begin{array}{lcl}
\Exp{\norms{\Gc_{\bar{\eta}_T}(\bar{x}_T)}^2}  & \leq & \frac{32P}{3\sqrt{b}[(T+3)^{2/3} - 2^{2/3}]}\big( \Psi_0(x_0) - \Psi^{\star}_0  + \frac{B_{\psi}}{(T+2)^{1/3}} \big) \vspace{1ex}\\
&& + {~}   \frac{16Q}{3[(T+3)^{2/3} - 2^{2/3}]}\Big(\frac{2^{1/3}}{\hat{b}_0} + \frac{2(1 + \log(T+1))}{b} \Big) = \BigO{\frac{\log(T)}{T^{2/3}}}.
\end{array}
\hspace{-2ex}
\end{equation}
\end{theorem}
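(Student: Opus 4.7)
The plan is to extend the argument used for the strongly-concave diminishing step-size result (Theorem~\ref{th:convergence2_scvx_diminishing}) by carefully tracking the time-dependent smoothing parameter $\gamma_t$. The building blocks should be: (i) a one-step descent inequality on $\Psi_{\gamma_t}$, (ii) a recursive variance bound for the hybrid estimator $v_t$, (iii) a bound relating $\Psi_{\gamma_t}$ to $\Psi_0$ via the smoothing bias, and (iv) a weighted telescoping argument that uses the randomization rule for $(\bar{x}_T,\bar{\eta}_T)$.

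First, I would derive a descent inequality for $\Psi_{\gamma_t}$. Since $\Phi_{\gamma_t}$ is $L_{\Phi_{\gamma_t}}$-smooth by \eqref{eq:Phi_smoothness}, the standard proximal-gradient analysis applied to $\hat{x}_{t+1} = \prox_{\eta_t \Rc}(x_t - \eta_t v_t)$ together with the convex combination $x_{t+1} = (1-\theta_t)x_t + \theta_t \hat{x}_{t+1}$ yields an inequality of the form
\begin{equation*}
\Psi_{\gamma_t}(x_{t+1}) \leq \Psi_{\gamma_t}(x_t) - \tfrac{\theta_t \eta_t}{2}\norms{\Gc_{\eta_t}(x_t)}^2 + C_1 \theta_t \eta_t \norms{v_t - \nabla \Phi_{\gamma_t}(x_t)}^2,
\end{equation*}
for an absolute constant $C_1 > 0$ coming from the choice $\eta_t = 2/[L_{\Phi_{\gamma_t}}(3+\theta_t)]$.

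Second, I would bound $\Exp{\norms{v_t - \nabla \Phi_{\gamma_t}(x_t)}^2}$ by splitting $v_t - \nabla\Phi_{\gamma_t}(x_t) = \tilde{J}_t^\top K y^*_{\gamma_t}(\tilde{F}_t) - F'(x_t)^\top K y^*_{\gamma_t}(F(x_t))$, using the $M_F$-Lipschitz and $L_F$-smoothness properties of $\Fb$ together with the $1/(\gamma_t + \mu_\psi)$-Lipschitzness of $\nabla\phi_{\gamma_t}$ (Lemma~\ref{le:properties_of_phi}), to obtain an estimator error that recursively satisfies
\begin{equation*}
\Exp{\norms{v_t - \nabla\Phi_{\gamma_t}(x_t)}^2} \leq \beta_{t-1}^2 \Exp{\norms{v_{t-1} - \nabla\Phi_{\gamma_{t-1}}(x_{t-1})}^2} + \tfrac{A_1 \theta_{t-1}^2}{\gamma_{t-1}^2 b} + \tfrac{A_2 (1-\beta_{t-1})^2}{\gamma_{t-1}^2 b},
\end{equation*}
up to an additional bias term controlling the shift $\nabla\Phi_{\gamma_t} - \nabla\Phi_{\gamma_{t-1}}$. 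This is where the choices $b_1^t = b_2^t = c_0 b/\gamma_t^2$ cancel the $1/\gamma_t^2$ blowup of the variance of the smoothed gradient. Unrolling this recursion with $\beta_t = 1 - (t+2)^{-2/3}$ and $\theta_t \sim (t+2)^{-1/3}\cdot \gamma_t^{-1}$ gives a bound of order $\Exp{\norms{v_t - \nabla\Phi_{\gamma_t}(x_t)}^2} = \mathcal{O}(1/(b(t+2)^{2/3}))$ up to an initialization term of order $Q/(\hat{b}_0 (t+2)^{2/3})$.

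Third, I would combine the descent inequality with the variance bound, telescope, and handle the time-varying smoothing objective by writing
\begin{equation*}
\Psi_{\gamma_{t+1}}(x_{t+1}) - \Psi_{\gamma_t}(x_t) = \big[\Psi_{\gamma_{t+1}}(x_{t+1}) - \Psi_{\gamma_t}(x_{t+1})\big] + \big[\Psi_{\gamma_t}(x_{t+1}) - \Psi_{\gamma_t}(x_t)\big],
\end{equation*}
where the first bracket is bounded by $B_{\psi}(\gamma_t - \gamma_{t+1}) \geq 0$ since $\gamma_t$ is decreasing and $\phi_{\gamma}$ is monotone in $\gamma$ (Lemma~\ref{le:properties_of_phi}). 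The telescoped left-hand side then contributes $\Psi_{\gamma_T}(x_{T+1}) - \Psi_{\gamma_0}(x_0) \geq \Psi_0^{\star} - \Psi_0(x_0) - B_{\psi}\gamma_{T+1}$ after applying $|\Psi_{\gamma_t} - \Psi_0| \leq B_{\psi}\gamma_t$. The right-hand side becomes $\sum_{t=0}^T \theta_t \eta_t \Exp{\norms{\Gc_{\eta_t}(x_t)}^2}/2$ minus error terms summing to $\mathcal{O}\big(\sum_{t=0}^T \theta_t/(L_{\Phi_{\gamma_t}}\cdot b(t+2)^{2/3})\big) = \mathcal{O}(\log T / b)$ after using $\theta_t/L_{\Phi_{\gamma_t}} \sim \sqrt{b}/(P(t+2)^{1/3})$.

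Finally, dividing by $\sum_{t=0}^T \theta_t/L_{\Phi_{\gamma_t}} = \Omega\big(\sqrt{b}\,[(T+3)^{2/3} - 2^{2/3}]/P\big)$ and applying the randomization rule $\mathbf{Prob}\{\Gc_{\bar{\eta}_T}(\bar{x}_T) = \Gc_{\eta_t}(x_t)\} \propto \theta_t/L_{\Phi_{\gamma_t}}$ converts the weighted sum into $\Exp{\norms{\Gc_{\bar\eta_T}(\bar x_T)}^2}$ and yields exactly the bound \eqref{eq:convergence_rate1_b_a2}. The main obstacle I anticipate is verifying that the error terms coming from $\nabla\Phi_{\gamma_t} - \nabla\Phi_{\gamma_{t-1}}$, which arise because the ``target'' of the hybrid estimator drifts between iterations, can be absorbed into the existing recursion without degrading the exponent $2/3$; the compensating factor $1/\gamma_t^2$ in the batch size $b_1^t$ must be tuned precisely so that the $\gamma_t$-dependent noise inflation and the $L_{\Phi_{\gamma_t}} \sim 1/\gamma_t$ growth of the descent constant conspire to produce the same rate as in the strongly-convex case, modulo the additive $B_{\psi}/(T+2)^{1/3}$ smoothing bias.
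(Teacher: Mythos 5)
Your overall architecture — one-step descent on $\Psi_{\gamma_t}$, a variance recursion for the hybrid estimator, the smoothing-bias telescoping via Lemma~\ref{le:properties_of_phi}(e), and the weighted averaging induced by the randomization rule — is the same as the paper's (which instantiates the general bound of Theorem~\ref{thm:comp_D1} with the schedule \eqref{eq:para_config0_a2} and then evaluates $\sum_t (t+2)^{-1/3}\ge\tfrac{3}{2}[(T+3)^{2/3}-2^{2/3}]$ and $\sum_t (t+3)^{1/3}(t+2)^{-4/3}\le 1+\log(T+1)$). Your handling of the drifting objective, $\Psi_{\gamma_{t+1}}(x_{t+1})\le \Psi_{\gamma_t}(x_{t+1})+(\gamma_t-\gamma_{t+1})B_\psi$ and the final $\gamma_T B_\psi$ correction, is exactly what the paper does.

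The genuine gap is in your step (ii). You posit a recursion of the form $\Exp{\norms{v_t-\nabla\Phi_{\gamma_t}(x_t)}^2}\le \beta_{t-1}^2\,\Exp{\norms{v_{t-1}-\nabla\Phi_{\gamma_{t-1}}(x_{t-1})}^2}+\cdots$, but $v_t=\tilde J_t^{\top}\nabla\phi_{\gamma_t}(\tilde F_t)$ is \emph{not} itself a hybrid/momentum estimator — only $\tilde F_t$ and $\tilde J_t$ are, via \eqref{eq:est_update} — so the STORM-type contraction with leading factor $\beta_{t-1}^2$ does not follow for $v_t$: the nonlinear map $\nabla\phi_{\gamma_t}(\cdot)$ applied to $\tilde F_t$ and the product with $\tilde J_t$ destroy the linear recursion structure, and the target drift $\nabla\Phi_{\gamma_t}-\nabla\Phi_{\gamma_{t-1}}$ you flag as your "main obstacle" is a real one on this route (it scales like $(\gamma_{t-1}-\gamma_t)/\gamma_t^2$ and is not obviously absorbable). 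The paper avoids this entirely: it keeps \emph{separate} recursions for $\Exp{\norms{\tilde F_t-F(x_t)}^2}$ and $\Exp{\norms{\tilde J_t-F'(x_t)}^2}$ (Lemma~\ref{lem:F_J_est_var}), whose targets $F(x_t)$ and $F'(x_t)$ are independent of $\gamma_t$ so no drift arises there, puts them into the Lyapunov function \eqref{eq:Lyapunov_func} with $\gamma_t$-dependent weights $\alpha_t,\hat\alpha_t\propto\Theta_t/\sqrt{1-\beta_t}$, and only converts to the $v_t$-error \emph{pointwise at iteration $t$} through Lemma~\ref{lem:vt_var} inside the descent inequality. To make your proof go through you should replace your direct recursion on $v_t$ by this two-component Lyapunov argument; the rest of your plan then yields \eqref{eq:convergence_rate1_b_a2} as you describe.
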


Note that since $\gamma_t := \frac{1}{(t+2)^{1/3}}$ (diminishing) and $b_1^t = b_2^t := \frac{c_0b}{\gamma_{t}^2}$, we have $b_1^t = b_2^t = c_0b(t+2)^{2/3}$, which shows that the mini-batch sizes of the function estimation $\tilde{F}_t$ are chosen in increasing manner (not fixed at a large size for all $t$), which can save computational cost for $F$.
The batch sizes $b$ and $\hat{b}_0$ in Theorems~\ref{th:convergence2} and \ref{th:nonsmooth_diminishing} must be chosen to guarantee $\beta_t, \theta_t \in (0, 1]$.

\beforesubsec
\subsection{Constructing approximate KKT point for \eqref{eq:min_max_form} from Algorithm~\ref{alg:A1}}
\aftersubsec
Existing works such as \cite{luo2020stochastic,zhang2019multi,zhang2020stochastic} do not show how to construct an $\epsilon$-KKT point of \eqref{eq:min_max_form}  or an $\epsilon$-stationary point of \eqref{eq:com_nlp} from $\bar{x}_T$ with $\Exp{\norms{\Gc_{\bar{\eta}_T}(\bar{x}_T)}^2} \leq\varepsilon^2$.
Lemma~\ref{le:approx_KKT0}, whose proof is in Supp. Doc.~\ref{apdx:le:approx_KKT0}, shows one way to construct an $\epsilon$-KKT point of \eqref{eq:min_max_form} in the sense of Definition~\ref{de:approx_KKT_point} with $\epsilon := \BigO{\varepsilon}$ from the output $\bar{x}_T$ of Theorems~\ref{th:convergence2_scvx}, \ref{th:convergence2_scvx_diminishing}, \ref{th:convergence2}, and \ref{th:nonsmooth_diminishing}.

\begin{lemma}\label{le:approx_KKT0}
Let $\bar{x}_T$ be computed by Algorithm~\ref{alg:A1} up to an accuracy $\varepsilon > 0$ after $T$ iterations.
Assume that we can approximate $F'(\bar{x}_T)$, $F(\bar{x}_T)$, and $F(\tilde{x}^{*}_{\gamma_T})$, respectively such that
\begin{equation}\label{eq:approx_oralce10}
\hspace{-0.5ex}
\arraycolsep=0.2em
\begin{array}{ll}
&\Exp{\norms{\tilde{F}(\bar{x}_T) - F(\bar{x}_T)}} \leq  (\mu_{\psi}+\gamma_T)\varepsilon, \quad \Exp{\norms{(\tilde{J}(\bar{x}_T) - F'(\bar{x}_T))^{\top}\nabla{\phi_{\gamma_T}}(\tilde{F}(\bar{x}_T))}} \leq \varepsilon,\vspace{1ex}\\
\text{and}~&\Exp{\norms{\tilde{F}(\tilde{x}^{*}_{\gamma_T}) - F(\tilde{x}^{*}_{\gamma_T})}} \leq \varepsilon.
\end{array}
\hspace{-3ex}
\end{equation}
Let us denote $\widetilde{\nabla}{\Phi}_{\gamma_T}(\bar{x}_T) := \tilde{J}(\bar{x}_T)^{\top}\nabla{\phi_{\gamma}}(\tilde{F}(\bar{x}_T))$ and compute $(\tilde{x}^{*}_{\gamma_T}, \tilde{y}^{*}_{\gamma_T})$ as
\begin{equation}\label{eq:KKT_point_t}
\tilde{x}^{*}_{\gamma_T} := \prox_{\bar{\eta}_T\Rc}(\bar{x}_T - \bar{\eta}_T\widetilde{\nabla}{\Phi}_{\gamma_T}(\bar{x}_T)) \quad\text{and}\quad \tilde{y}^{*}_{\gamma_T} := y^{*}_{\gamma_T}(\tilde{F}(\tilde{x}^{*}_{\gamma_T}))~\text{by \eqref{eq:smoothed_phi}}.
\end{equation}
Suppose that $\Exp{\norms{\Gc_{\bar{\eta}_T}(\bar{x}_T)}^2} \leq\varepsilon^2$ and $0 \leq \gamma_T \leq c_2\varepsilon$ for a constant $c_2 \geq 0$.
Then
\begin{equation}\label{eq:approx_KKT2_main}
\Exp{\Ec(\tilde{x}^{*}_{\gamma_T}, \tilde{y}^{*}_{\gamma_T})} \leq \epsilon, \quad\text{where}\quad \epsilon := \big[\tfrac{13}{3} + \tfrac{8}{3}M_F\norms{K}^2 + c_2D_{\psi}\big]\varepsilon,
\end{equation}
where $D_{\psi}$ is  in Lemma~\ref{le:properties_of_phi} and  $\Ec(\cdot)$ is given by \eqref{eq:approx_KKT_point}.
In other words, $(\tilde{x}^{*}_{\gamma_T}, \tilde{y}^{*}_{\gamma_T})$ is an $\epsilon$-KKT of \eqref{eq:min_max_form}.
\end{lemma}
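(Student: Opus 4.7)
The plan is to bound the expected KKT residual $\Exp{\Ec(\tilde{x}^{*}_{\gamma_T}, \tilde{y}^{*}_{\gamma_T})}$ by splitting it into its primal and dual components
\[
\Ec_1 := \dist{0, F'(\tilde{x}^{*}_{\gamma_T})^{\top}K\tilde{y}^{*}_{\gamma_T} + \partial\Rc(\tilde{x}^{*}_{\gamma_T})}, \qquad \Ec_2 := \dist{0, K^{\top}F(\tilde{x}^{*}_{\gamma_T}) - \partial\psi(\tilde{y}^{*}_{\gamma_T})},
\]
and treating each by invoking the exact first-order optimality of the two defining operations in \eqref{eq:KKT_point_t}, then carefully accounting for the three sources of error: the gradient-estimator bias at $\bar{x}_T$, the function-value bias at $\tilde{x}^{*}_{\gamma_T}$, and the smoothing bias $\gamma_T$.

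For $\Ec_2$, I would use the optimality of $\tilde{y}^{*}_{\gamma_T} = y^{*}_{\gamma_T}(\tilde{F}(\tilde{x}^{*}_{\gamma_T}))$ in the smoothed max-problem \eqref{eq:smoothed_phi}, which yields $h \in \partial\psi(\tilde{y}^{*}_{\gamma_T})$ with $K^{\top}\tilde{F}(\tilde{x}^{*}_{\gamma_T}) - h - \gamma_T\nabla b(\tilde{y}^{*}_{\gamma_T}) = 0$. Adding and subtracting $K^{\top}\tilde{F}(\tilde{x}^{*}_{\gamma_T})$ gives
$K^{\top}F(\tilde{x}^{*}_{\gamma_T}) - h = K^{\top}[F(\tilde{x}^{*}_{\gamma_T}) - \tilde{F}(\tilde{x}^{*}_{\gamma_T})] + \gamma_T\nabla b(\tilde{y}^{*}_{\gamma_T})$.
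Taking norms, invoking the third approximation bound in \eqref{eq:approx_oralce10}, the uniform bound $\norms{\nabla b(\cdot)}\leq D_\psi$ from Lemma~\ref{le:properties_of_phi}, and the assumption $\gamma_T \leq c_2\varepsilon$, then taking expectation, one gets $\Exp{\Ec_2} \leq (\norms{K} + c_2 D_\psi)\varepsilon$ up to absolute constants.

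For $\Ec_1$, I would exploit the prox optimality of $\tilde{x}^{*}_{\gamma_T}$ in \eqref{eq:KKT_point_t}, which produces the explicit subgradient element $g := \frac{1}{\bar{\eta}_T}(\bar{x}_T - \tilde{x}^{*}_{\gamma_T}) - \widetilde{\nabla}\Phi_{\gamma_T}(\bar{x}_T) \in \partial\Rc(\tilde{x}^{*}_{\gamma_T})$. The key identity is
\[
F'(\tilde{x}^{*}_{\gamma_T})^{\top}K\tilde{y}^{*}_{\gamma_T} + g = \underbrace{[F'(\tilde{x}^{*}_{\gamma_T})^{\top}Ky^{*}_{\gamma_T}(\tilde{F}(\tilde{x}^{*}_{\gamma_T})) - \nabla\Phi_{\gamma_T}(\tilde{x}^{*}_{\gamma_T})]}_{A} + \underbrace{[\nabla\Phi_{\gamma_T}(\tilde{x}^{*}_{\gamma_T}) - \nabla\Phi_{\gamma_T}(\bar{x}_T)]}_{B} + \underbrace{[\nabla\Phi_{\gamma_T}(\bar{x}_T) - \widetilde{\nabla}\Phi_{\gamma_T}(\bar{x}_T)]}_{C} + \underbrace{\tfrac{\bar{x}_T - \tilde{x}^{*}_{\gamma_T}}{\bar{\eta}_T}}_{D}.
\]
Term $A$ is the function-value bias at $\tilde{x}^{*}_{\gamma_T}$ passed through the $\norms{K}^2/(\mu_\psi+\gamma_T)$-Lipschitz continuity of $\nabla\phi_{\gamma_T}$, which the scaling $(\mu_\psi+\gamma_T)\varepsilon$ in \eqref{eq:approx_oralce10} is designed to cancel, yielding the $M_F\norms{K}^2\varepsilon$ contribution. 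Term $C$ splits further as $(F'-\tilde J)^{\top}\nabla\phi_{\gamma_T}(\tilde F)$ (bounded by the second condition in \eqref{eq:approx_oralce10}) plus $F'^{\top}(\nabla\phi_{\gamma_T}(F) - \nabla\phi_{\gamma_T}(\tilde F))$ at $\bar x_T$, again using the $(\mu_\psi+\gamma_T)$-scaled first bound to neutralize the Lipschitz denominator. Term $D$ equals the biased gradient mapping $\tilde{\Gc}_{\bar{\eta}_T}(\bar{x}_T)$, which by nonexpansiveness of $\prox$ differs from $\Gc_{\bar{\eta}_T}(\bar{x}_T)$ by at most $\norms{C}$; combining with Jensen's inequality and $\Exp{\norms{\Gc_{\bar{\eta}_T}(\bar{x}_T)}^2}\leq\varepsilon^2$ gives $\Exp{\norms{D}}\leq(1 + 1 + M_F\norms{K}^2)\varepsilon$. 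Term $B$ is handled via $L_{\Phi_{\gamma_T}}$-smoothness of $\Phi_{\gamma_T}$ applied to $\norms{\tilde{x}^{*}_{\gamma_T} - \bar{x}_T} = \bar{\eta}_T\norms{\tilde{\Gc}_{\bar{\eta}_T}(\bar{x}_T)}$; since $\bar{\eta}_T L_{\Phi_{\gamma_T}} = 2/(3+\theta_T) = \mathcal{O}(1)$, this also gives an $\mathcal{O}(\varepsilon)$ contribution independent of $\gamma_T$.

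Summing the four contributions, taking expectations, and collecting all numerical constants produces the claimed bound with coefficient $\tfrac{13}{3} + \tfrac{8}{3}M_F\norms{K}^2$ for the primal part and the extra $c_2 D_\psi$ from the dual part, so $\Exp{\Ec(\tilde{x}^{*}_{\gamma_T}, \tilde{y}^{*}_{\gamma_T})} \leq \epsilon$ as claimed. The main obstacle I expect is the careful bookkeeping in term $A$ (and the gradient-Lipschitz piece of $C$): the factor $\norms{K}^2/(\mu_\psi+\gamma_T)$ would blow up when $\psi$ is non-strongly convex and $\gamma_T\to 0$, so the proof crucially leverages the scaling $(\mu_\psi+\gamma_T)\varepsilon$ in the first approximation bound of \eqref{eq:approx_oralce10} to cancel this denominator and keep the final $\gamma_T$-free coefficient $M_F\norms{K}^2$.
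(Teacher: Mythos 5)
Your overall architecture (prox optimality for the primal residual, optimality of the smoothed max-problem for the dual residual, nonexpansiveness of $\prox$ plus $L_{\Phi_{\gamma_T}}$-smoothness, Jensen for $\Exp{\norms{\Gc_{\bar\eta_T}(\bar x_T)}}$) matches the paper's, and your treatment of $\Ec_2$ and of terms $B$, $C$, $D$ is sound. The gap is in term $A$. You claim that the scaling $(\mu_\psi+\gamma_T)\varepsilon$ in the first condition of \eqref{eq:approx_oralce10} cancels the Lipschitz constant $L_{\phi_{\gamma_T}} = \norms{K}^2/(\mu_\psi+\gamma_T)$ appearing in $A$. But that scaled condition bounds $\Exp{\norms{\tilde F(\bar x_T)-F(\bar x_T)}}$, i.e.\ the approximation at $\bar x_T$ — it is consumed entirely by term $C$. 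Term $A = F'(\tilde x^{*}_{\gamma_T})^{\top}K\bigl[y^{*}_{\gamma_T}(\tilde F(\tilde x^{*}_{\gamma_T})) - y^{*}_{\gamma_T}(F(\tilde x^{*}_{\gamma_T}))\bigr]$ instead involves the approximation at $\tilde x^{*}_{\gamma_T}$, for which the hypothesis only provides the \emph{unscaled} bound $\Exp{\norms{\tilde F(\tilde x^{*}_{\gamma_T})-F(\tilde x^{*}_{\gamma_T})}}\le\varepsilon$. Hence the best you get is $\Exp{\norms{A}} \le \frac{M_F\norms{K}^2}{\mu_\psi+\gamma_T}\,\varepsilon$, which in the non-strongly-concave case ($\mu_\psi=0$, $\gamma_T=c_2\varepsilon$) equals $M_F\norms{K}^2/c_2 = \mathcal{O}(1)$, not $\mathcal{O}(\varepsilon)$ — precisely the regime the smoothing is meant to handle.

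The paper avoids this term altogether: in its Lemma~\ref{le:approx_KKT} the primal residual is measured as $\dist{0,\nabla\Phi_{\gamma_T}(\tilde x^{*}_{\gamma_T})+\partial\Rc(\tilde x^{*}_{\gamma_T})}$, i.e.\ with $y^{*}_{\gamma_T}(F(\tilde x^{*}_{\gamma_T}))$ rather than with $\tilde y^{*}_{\gamma_T}=y^{*}_{\gamma_T}(\tilde F(\tilde x^{*}_{\gamma_T}))$, so your term $A$ never appears; the third condition in \eqref{eq:approx_oralce10} is used only in the dual residual, where the error enters through $K^{\top}(F-\tilde F)(\tilde x^{*}_{\gamma_T})$ and picks up a factor $\norms{K}$ rather than $L_{\phi_{\gamma_T}}$. (Your decomposition is arguably the more literal reading of Definition~\ref{de:approx_KKT_point}, but under the stated hypotheses it cannot be closed.) To repair your route you would need either to strengthen the third condition to $\Exp{\norms{\tilde F(\tilde x^{*}_{\gamma_T})-F(\tilde x^{*}_{\gamma_T})}}\le(\mu_\psi+\gamma_T)\varepsilon$, or to drop term $A$ by adopting the paper's definition of the primal residual.
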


If we use stochastic estimators as in \eqref{eq:est_snap_def} to form $\tilde{F}(\bar{x}_T)$ , $\tilde{J}(\bar{x}_T)$, and $\tilde{F}(\tilde{x}^{*}_{\gamma_T})$ with batch sizes $b_T$, $\hat{b}_T$, and $\tilde{b}_T$, respectively, then \eqref{eq:approx_oralce10} holds if we choose $b_T {\!}:= {\!} \rounds{\frac{\sigma_F^2}{ (\mu_{\psi}+\gamma_T)^2\varepsilon^2}}$,  $\hat{b}_T := \rounds{\frac{\sigma_J^2}{\varepsilon^2}}$, and $\tilde{b}_T := \rounds{\frac{\sigma_F^2}{\varepsilon^2}}$.
We do not explicitly compute Jacobian $\tilde{J}(\bar{x}_T)$, but its matrix-vector product $\tilde{J}(\bar{x}_T)^{\top}\nabla{\phi_{\gamma_T}}(\tilde{F}(\bar{x}_T))$.
This extra cost is dominated by $\Tc_J$ and $\Tc_F$ in Theorems~\ref{th:convergence2_scvx}, \ref{th:convergence2_scvx_diminishing}, \ref{th:convergence2}, and \ref{th:nonsmooth_diminishing}.
For $\bar{x}_T$ computed by Theorems~\ref{th:convergence2_scvx} and \ref{th:convergence2_scvx_diminishing}, we can set $\gamma_T := 0$, or equivalently, $c_2 := 0$.
For $\bar{x}_T$ computed by Theorem~ \ref{th:convergence2}, since $\gamma_T := c_2\varepsilon$ and $\mu_{\psi} = 0$, we have $b_T = \rounds{\frac{\sigma_F^2}{ c_2^2\varepsilon^4}} < \Tc_F = \mathcal{O}\big( \frac{\hat{\Delta}_0^{3/2}}{\varepsilon^{5}} \big)$.

\beforesec
\section{Numerical experiments}\label{sec:num_exps}
\aftersec
We use two examples to illustrate our algorithm and compare it with existing methods. Our code is implemented in Python 3.6.3, running on  a Linux desktop (3.6GHz Intel Core i7 and 16Gb memory).

\beforesubsec
\subsection{Risk-averse portfolio optimization}\label{subsec:example1}
\aftersubsec
We consider a risk-averse portfolio optimization problem studied in \cite{Markowitz1952}, and recent used in \cite{zhang2019stochastic}:
\begin{equation}\label{eq:portfolio_exam}
\max_{x\in\R^p} \Big\{\Exps{\xi}{h_{\xi}(x)} - \rho\Vars{\xi}{h_{\xi}(x)} \equiv \Exps{\xi}{h_{\xi}(x)} + \rho\Exps{\xi}{h_{\xi}(x)}^2 - \rho\Exps{\xi}{h_{\xi}^2(x)}\Big\},
\end{equation} 
where $\rho > 0$ is a trade-off parameter and $h_{\xi}(x)$ is a reward for the portfolio vector $x$.
Following \cite{zhang2019stochastic}, \eqref{eq:portfolio_exam} can be reformulated into \eqref{eq:com_nlp}, where $\phi_0(u) = u_1 + \rho u_1^2 - \rho u_2$ is smooth, and $\Fb(x,\xi) = (h_{\xi}(x), h_{\xi}^2(x))^{\top}$.
Suppose further that we only consider $N$ periods of time. 
Then we can view $\xi \in \set{1,\cdots, N}$ as a discrete random variable and define $h_{i}(x) := \iprods{r_i, x}$ as a linear reward function, where 
$r_i := (r_{i1},\cdots,r_{ip})^{\top}$ and $r_{ij}$ represents the return per unit of $j$ at time $i$.
We also choose $\Rc(x) := \lambda\norms{x}_1$ as a regularizer to promote sparsity as in \cite{zhang2019stochastic}.

\begin{figure}[ht!]
\begin{center}
\includegraphics[width=1\textwidth]{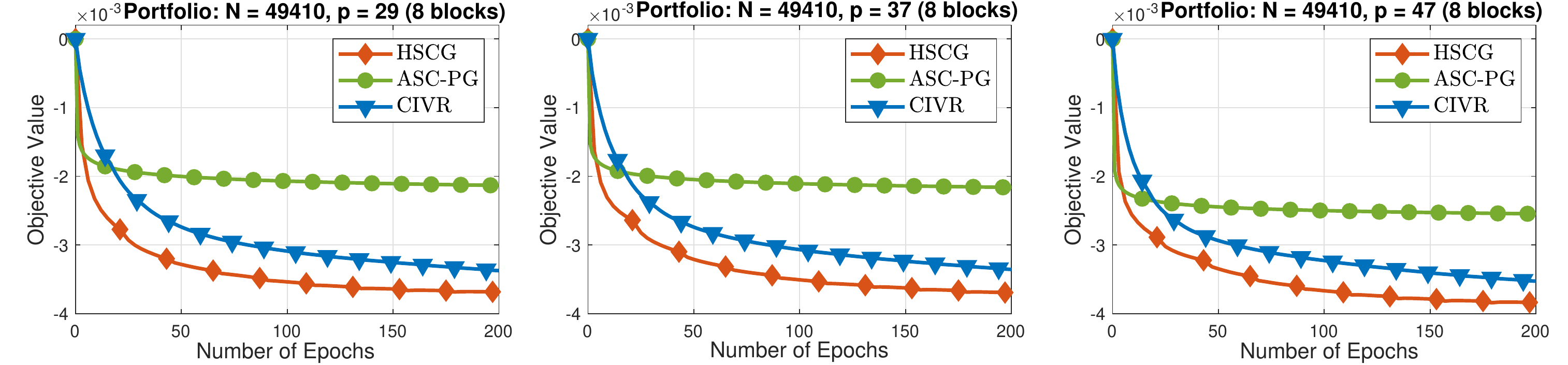}
\caption{
Comparison of three algorithms for solving \eqref{eq:portfolio_exam} on $3$ different datasets.
}
\label{fig:risk_averse}
\end{center}
\end{figure}

We implement our algorithm, abbreviated by \texttt{HSCG} (i.e., Hybrid Stochastic Compositional Gradient for short), and test it on three real-world portfolio datasets, which contain $29$, $37$, and $47$ portfolios, respectively, from the Keneth R. French Data Library \cite{PortfolioData2020}.
We set $\rho := 0.2$ and $\lambda := 0.01$ as in \cite{zhang2019stochastic}.
For comparison, we also implement 2 methods, called \texttt{CIVR} in \cite{zhang2019stochastic} and \texttt{ASC-PG} in \cite{wang2017accelerating}.
The step-size $\eta$ of all algorithms are well tuned from a set of trials $\set{1, 0.5, 0.1, 0.05, 0.01, 0.001,0.0001}$.
The performance of 3 algorithms are shown in Figure \ref{fig:risk_averse} for three datasets using $b:=\rounds{N/8}$ (8 blocks). 

One can observe from Fig. \ref{fig:risk_averse} that both \texttt{HSCG} and  \texttt{CIVR} highly outperform \texttt{ASC-PG} due to their variance-reduced property. 
\texttt{HSCG} is slightly better than  \texttt{CIVR} since it has a flexible step-size $\theta_t$. 
Note that, in theory, \texttt{CIVR} requires a large batch for both function values and Jacobian, which may affect its performance, while \texttt{HSCG} can work with a wide range of batches, including singe sample.

\beforesubsec
\subsection{Stochastic minimax problem}\label{subsec:example2}
\aftersubsec
We consider the following regularized stochastic minimax problem studied, e.g., in \cite{shapiro2002minimax}:
\begin{equation}\label{eq:min_max_stochastic_opt}
\min_{x\in \R^p}\Big\{\max_{1\leq i\leq m}\{\Exps{\xi}{\Fb_i(x, \xi)}\} + \tfrac{\lambda}{2}\norms{x}^2\Big\},
\end{equation}
where $\Fb_i : \R^p\times\Omega\to\R_{+}$ can be taken as the loss function of the $i$-th model.
If we define $\phi_0(u) := \max_{1\leq i\leq m}\{u_i\}$ and $\Rc(x) := \frac{\lambda}{2}\norms{x}^2$, then \eqref{eq:min_max_stochastic_opt} can be reformulated into \eqref{eq:com_nlp}.
Since $u_i \geq 0$, we have $\phi_0(u) := \max_{1\leq i\leq m}\{u_i\} = \norms{u}_{\infty} = \max_{\norms{y}_1 \leq 1}\{\iprods{u, y}\}$, which is nonsmooth.
Therefore, we can smooth $\phi_0$ as $\phi_{\gamma}(u) := \max_{\norms{y}_1 \leq 1}\{\iprods{u, y} - (\gamma/2)\norms{y}^2\}$ using $b(y) := \frac{1}{2}\norms{y}^2$.

In this example, we employ \eqref{eq:min_max_stochastic_opt} to solve a model selection problem in binary classification with nonconvex loss, see, e.g., \cite{zhao2010convex}.
Suppose that we have four $(m=4)$ different nonconvex losses: $\Fb_1(x, \xi) := 1 - \tanh(b\iprods{a,x})$, $\Fb_2(x, \xi) := \log(1 + \exp(-b\iprods{a,x})) - \log(1 + \exp(-b\iprods{a,x}-1))$,  $\Fb_3(x, \xi) := (1 - 1/(\exp(-b\iprods{a,x})+1))^2$, and $\Fb_4(x, \xi) := \log(1 + \exp(-b\iprods{a,x}))$ (see \cite{zhao2010convex} for more details), where $\xi := (a, b)$ represents examples.
We assume that we have $N$ examples of $\xi$.

\begin{figure}[ht!]
\hspace{-2ex}
\centering
\includegraphics[width=1\textwidth]{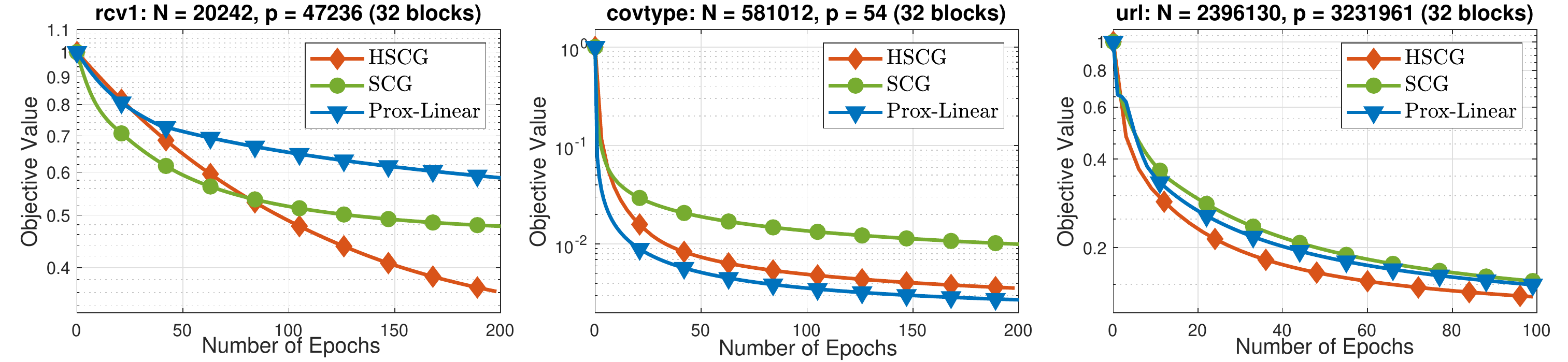}
\hspace{-2ex}
\caption{
Comparison of three algorithms for solving \eqref{eq:min_max_stochastic_opt} on $3$ different datasets.
}
\label{fig:min_max_stochastic_opt}
\end{figure}

We implement three algorithms: \texttt{HSCG}, \texttt{SCG} in \cite{wang2017stochastic}, and \texttt{Prox-Linear} in \cite{zhang2020stochastic}.
We test them on 3 datasets from LIBSVM \cite{CC01a}.
We set $\lambda := 10^{-4}$ and update our $\gamma_t$ parameter as $\gamma_t := \frac{1}{2(t+1)^{1/3}}$. 
The step-size $\eta$ of all algorithms are well tuned from $\{1, 0.5, 0.1, 0.05, 0.01, 0.001,0.0001\}$, and their performance is shown in Figure \ref{fig:min_max_stochastic_opt} for three datasets: \textbf{rcv1}, \textbf{covtype}, and \textbf{url} with $32$ blocks.

One can observe from Figure \ref{fig:min_max_stochastic_opt} that \texttt{HSCG} outperforms \texttt{SCG} and  \texttt{Prox-Linear} on \textbf{rcv1} and \textbf{url}. 
For \textbf{covtype}, since $p$ is very small, allowing us to evaluate the prox-linear operator to a high accuracy, \texttt{Prox-Linear} slightly performs better than ours and much better than \texttt{SCG}. 
Note that solving the subproblem of \texttt{Prox-Linear} is expensive when $p$ is large.
Hence, if $p$ is large, \texttt{Prox-Linear} becomes much slower than  \texttt{HSCG} and \texttt{SCG} in terms of time.
Due to space limit, we refer to Supp. Doc. \ref{apdx:sec:add_num_exam} for further details of experiments and additional results.

\beforesec
\section{Conclusions}\label{sec:concl}
\aftersec
We have proposed a new single loop hybrid variance-reduced SGD algorithm, Algorithm~\ref{alg:A1}, to solve a class of nonconvex-concave saddle-point problems.
The main idea is to combine both smoothing idea \cite{Nesterov2005c} and hybrid SGD approach in \cite{Tran-Dinh2019a} to develop novel algorithms with less tuning effort.
Our algorithm relies on standard assumptions, and can achieve the best-known oracle complexity, and in some cases, the optimal oracle complexity. 
It also has several computational advantages compared to existing methods such as avoiding expensive subproblems, working with both single sample and mini-batches, and using constant and diminishing step-sizes.
We have also proposed a simple restarting variant, Algorithm~\ref{alg:A2}, in Supp. Doc. \ref{apdx:sec:restarting_hSGD} to improve practical performance in the constant step-size case without sacrificing complexity bounds.
We believe that both algorithms and theoretical results are new, even in the smooth case, compared to \cite{tran2020stochastic,zhang2019multi,zhang2020stochastic}.

\clearpage
\newpage
\beforesec
\section{Broader Impact}\label{sec:broader_impact}
\aftersec
This work could potentially have positive impact in different fields where nonconvex-concave minimax and nonconvex compositional optimization models as \eqref{eq:min_max_form} and \eqref{eq:com_nlp} are used.
For instance, robust learning, distributionally robust optimization, zero-sum game, and generative adversarial nets (GANs) applications are concrete examples under certain settings.
We emphasize that the nonconvex-concave minimax problem \eqref{eq:min_max_form} studied in this paper remains challenging to solve for global solutions.
Existing methods can only find an approximate KKT (Karush-Kuhn-Tucker) point in general.
This paper proposed new algorithms to tackle a class of nonconvex-concave minimax problems, but they can only guarantee to find an approximate KKT point, which may not be an approximate global solution of the model. 
This could lead to a negative impact if one expects to find an approximate global solution instead of an approximate KKT point without further investigation.
Apart from the above impact, since this paper is a theoretical work, it does not present any other foreseeable societal consequence.

\begin{ack}
The work of Quoc Tran-Dinh and Deyi Liu is partially supported by the National Science Foundation (NSF), grant no. DMS-1619884, and the Office of Naval Research (ONR), grant No. N00014-20-1-2088.
The authors would also like to thank all the anonymous reviewers and the ACs for their constructive comments to improve the paper.
%
%
\end{ack}

\bibliographystyle{plain}


\clearpage
\newpage
\appendix
\begin{center}
\centering{\textbf{\large Supplementary Document}}

\textbf{\Large 
Hybrid Variance-Reduced SGD Algorithms For 
\vspace{0.5ex}\\ 
Minimax Problems with Nonconvex-Linear Function
}

\vspace{2ex}
\textbf{Quoc Tran-Dinh$^{*}$ \quad\qquad Deyi Liu$^{*}$ \quad\qquad Lam M. Nguyen$^{\dagger}$}\vspace{1ex}\\
  $^{*}$Department of Statistics and Operations Research\\
  The University of North Carolina at Chapel Hill, Chapel Hill, NC 27599 \\
  Emails: \texttt{\{quoctd@email.unc.edu, deyi.liu@live.unc.edu\}} \vspace{0.25ex}\\
$^{\dagger}$IBM Research, Thomas J. Watson Research Center \\ Yorktown Heights, NY10598, USA.\\
Email: \texttt{lamnguyen.mltd@ibm.com}
\end{center}
\vspace{0ex}

\beforesec
\section{Some technical results and proof of Lemma~\ref{le:approx_KKT0}}\label{app:A}
\aftersec
In this Supp. Doc., we provide some useful properties of $\phi_0$ in \eqref{eq:phi_func} and its smoothed approximation $\phi_{\gamma}$ defined by \eqref{eq:smoothed_phi} in Section~\ref{sec:math_tools}.
Then we recall and prove some bounds of variance for $\tilde{F}_t$, $\tilde{J}_t$, and $v_t$.
Finally, we prove Lemma~\ref{le:approx_KKT0} in the main text.

\beforesubsec
\subsection{Properties of the smoothed function $\phi_{\gamma}$}\label{apdx:subsec:smooth_properties}
\aftersubsec
Under Assumption~\ref{ass:A2}, $\phi_0$ in \eqref{eq:phi_func} and $\phi_{\gamma}$ defined by \eqref{eq:smoothed_phi} have the following properties. 

\begin{lemma}\label{le:properties_of_phi}
Let $\phi_0$ be defined by \eqref{eq:phi_func} and $\phi_{\gamma}$ be defined by \eqref{eq:smoothed_phi}. 
Then, the following statements hold:
\begin{compactitem}
\item[$\mathrm{(a)}$] $\dom{\psi}$ is bounded by $M_{\psi}$ iff $\phi_0$ is $M_{\phi_0}$-Lipschitz continuous with $M_{\phi_0} := M_{\psi}\norms{K}$.
\item[$\mathrm{(b)}$] $\dom{\psi}$ is bounded by $M_{\psi}$ iff $\phi_{\gamma}$ is Lipschitz continuous with $M_{\phi_{\gamma}} := M_{\psi}\norms{K}$.
\item[$\mathrm{(c)}$] $\phi_{\gamma}$ is convex and $L_{\phi_{\gamma}}$-smooth with $L_{\phi_{\gamma}} := \frac{\norms{K}^2}{\gamma + \mu_{\psi}}$.
\item[$\mathrm{(d)}$] It holds that $\phi_{\gamma}(u) \leq \phi_{0}(u) \leq \phi_{\gamma}(u) + \gamma B_{\psi}$ for all $u\in\R^q$, where $\gamma > 0$ and $B_{\psi} := \sup\set{ b(y) \mid y\in\dom{\psi}}$.
In addition, we have $D_{\psi} := \max_{v\in\dom{\psi}}\norms{\nabla{b}(v)} <+\infty$.
\item[$\mathrm{(e)}$] We have $\phi_{\gamma}(u) \leq \phi_{\hat{\gamma}}(u) + (\hat{\gamma} - \gamma)b(y_{\gamma}^{*}(u)) \leq \phi_{\hat{\gamma}}(u) + (\hat{\gamma} - \gamma)B_{\psi}$ for all $\hat{\gamma} \geq \gamma > 0$.
\end{compactitem}
\end{lemma}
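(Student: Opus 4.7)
My plan is to exploit the max-representation of both $\phi_0$ and $\phi_\gamma$ throughout, treating them as support-type functions of $\psi$ transported through the linear map $K$. The common key object is the maximizer $y^{\ast}_\gamma(u) = \argmax_y\{\iprods{u,Ky} - \psi(y) - \gamma b(y)\}$, which under Assumption~\ref{ass:A2} always lies in $\dom{\psi}$ and thus satisfies $\norms{y^{\ast}_\gamma(u)} \leq M_\psi$. When $\psi$ is $\mu_\psi$-strongly convex or $\gamma>0$, this maximizer is unique, so Danskin's theorem yields the gradient identity $\nabla \phi_\gamma(u) = K y^{\ast}_\gamma(u)$.

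For parts (a) and (b), the forward directions follow by estimating differences: for any $u,u'$, pick the maximizer $y^{\ast}$ (resp.\ $y^{\ast}_\gamma$) at $u$, then
\[
\phi(u) - \phi(u') \leq \iprods{Ky^{\ast}, u - u'} \leq \norms{K}\,\norms{y^{\ast}}\,\norms{u-u'} \leq M_\psi \norms{K}\,\norms{u-u'},
\]
and symmetrize. The reverse directions follow from the standard Fenchel-duality fact that a proper closed convex function is $L$-Lipschitz on $\R^q$ iff the effective domain of its conjugate is contained in an $L$-ball; applied to $\phi_0 = \psi^{\ast}\circ K^\top$ and $\phi_\gamma$ this translates into the required bound on $\dom \psi$ via the range of $K$ (passing to a suitable argmax sequence if $K$ is not surjective). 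For (c), convexity is immediate since $\phi_\gamma$ is a supremum of affine functions of $u$. The smoothness will come from the fact that when $\psi$ is $\mu_\psi$-strongly convex (possibly with $\mu_\psi=0$) and $\gamma>0$, the inner objective is $(\gamma+\mu_\psi)$-strongly concave in $y$, so the map $u\mapsto y^{\ast}_\gamma(u)$ is $\norms{K}/(\gamma+\mu_\psi)$-Lipschitz by a standard strong-monotonicity argument on the optimality condition $0 \in Ku - \partial\psi(y^{\ast}_\gamma(u)) - \gamma\nabla b(y^{\ast}_\gamma(u))$; composing with $K$ yields the claimed constant $L_{\phi_\gamma} = \norms{K}^2/(\gamma+\mu_\psi)$.

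For (d), the lower bound $\phi_\gamma \leq \phi_0$ holds pointwise because we subtract $\gamma b(y) \geq 0$ in the inner problem. The upper bound is obtained by taking $y^{\ast}$ a maximizer for $\phi_0(u)$ and splitting
\[
\phi_0(u) = \bigl(\iprods{u,Ky^{\ast}} - \psi(y^{\ast}) - \gamma b(y^{\ast})\bigr) + \gamma b(y^{\ast}) \leq \phi_\gamma(u) + \gamma B_\psi.
\]
Finiteness of $D_\psi$ follows since $\dom\psi$ is closed (as $\psi$ is closed convex) and bounded hence compact, so continuity of $\nabla b$ suffices. Part (e) uses exactly the same add-and-subtract trick with $\gamma$ and $\hat\gamma$: write $\phi_\gamma(u)$ using its own maximizer $y^{\ast}_\gamma(u)$, insert $\pm \hat\gamma b(y^{\ast}_\gamma(u))$, and bound the inserted term by $B_\psi$ via part (d)'s definition of $B_\psi$.

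The only mildly delicate point is the reverse implication in (a)/(b), where one needs to argue that Lipschitz continuity of $\phi_0 = \psi^{\ast}(K^\top \cdot)$ forces $\dom\psi$ to be bounded rather than just the image of $\dom\psi$ under some projection. I would handle this by picking a sequence $y_k \in \dom\psi$ with $\norms{y_k}\to\infty$ (toward contradiction), choosing $u_k$ along the direction of $K y_k$, and using the Lipschitz bound to contradict linear growth of $\phi_0$ along that ray; in the smoothed case (b), the same argument works since $b$ is bounded on $\dom\psi$ by part (d). Everything else is a routine chain of max-representation manipulations.
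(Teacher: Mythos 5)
Your proposal is correct in substance and covers all five parts, but it proceeds by direct first-principles arguments where the paper mostly cites standard results. For (a)--(b) the paper invokes \cite[Corollary 17.19]{Bauschke2011} (Lipschitz continuity of a closed convex function is equivalent to boundedness of the domain of its conjugate), combined with the gradient bound $\norms{\nabla\phi_\gamma(u)} = \norms{Ky^{*}_\gamma(u)} \leq M_\psi\norms{K}$; your subgradient-inequality-and-symmetrize argument proves the forward direction by hand and is entirely equivalent. For (c) the paper appeals to the Baillon--Haddad/conjugate-of-strongly-convex machinery \cite[Corollary 18.17]{Bauschke2011}, whereas you derive the $\norms{K}/(\gamma+\mu_\psi)$-Lipschitzness of $u\mapsto y^{*}_\gamma(u)$ from strong monotonicity of the optimality condition; this is the direct proof of the same fact and yields the same constant. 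For (e) the paper observes that $\gamma\mapsto\phi_\gamma(u)$ is convex with derivative $-b(y^{*}_\gamma(u))$ and uses the gradient inequality in $\gamma$; your add-and-subtract of $\hat\gamma b(y^{*}_\gamma(u))$ at the maximizer gives exactly the same inequality more elementarily. Part (d) is handled identically in both (the paper defers the sandwich to \cite{Nesterov2005c}).

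One caveat: your fix for the reverse implication in (a)/(b) does not quite close. If $K$ has a nontrivial kernel, a sequence $y_k\in\dom\psi$ with $\norms{y_k}\to\infty$ may have $Ky_k$ bounded (or zero), in which case choosing $u_k$ along $Ky_k$ produces no contradiction with Lipschitz continuity of $\phi_0$ --- indeed, Lipschitz continuity of $\phi_0=\psi^{*}\circ K^{\top}$ only forces boundedness of $K(\dom\psi)$, not of $\dom\psi$ itself. This is a genuine limitation of the ``only if'' direction as stated, but the paper's own one-line citation glosses over the same composition with $K^{\top}$, and only the forward direction (bounded domain implies Lipschitz) is ever used in the analysis, so this does not affect anything downstream.
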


\begin{proof}
The statement (a) can be found in \cite[Corollary 17.19]{Bauschke2011}.

Since $\nabla{\phi}_{\gamma}(u) = Ky_{\gamma}^{*}(u)$ with $y^{*}_{\gamma}(u)\in\dom{\psi}$, we have $\norms{\nabla{\phi}_{\gamma}(u)} \leq \norms{K}\norms{y^{*}_{\gamma}(u)} \leq M_{\psi}\norms{K}$.
Applying again \cite[Corollary 17.19]{Bauschke2011} we prove (b).

The statement (c) holds due to the well-known Baillon-Haddad theorem \cite[Corollary 18.17]{Bauschke2011}.

The proof of the first part of (d) can be found in \cite{Nesterov2005c}.
Under Assumption~\ref{ass:A2} and the continuous differentiability of $b$, we have $D_{\psi} := \max_{v\in\dom{\psi}}\norms{\nabla{b}(v)} <+\infty$.

Finally, for any $u$ and $y$, since $s(\gamma;u, y) :=\iprods{u, Ky} - \psi(y) - \gamma b(y)$ is linear in $\gamma$.
Therefore, $\phi_{\gamma}(u) := \max_{y\in\R^n}s(\gamma; u, y)$ is convex in $\gamma$ and $\frac{d}{d\gamma}\phi_{\gamma}(u) = -b(y^{*}_{\gamma}(u)) \leq 0$.
Consequently, we have $\phi_{\gamma}(u) + \frac{d}{d\gamma}\phi_{\gamma}(u)(\hat{\gamma} - \gamma) = \phi_{\gamma}(u) - (\hat{\gamma} - \gamma)b(y^{*}_{\gamma}(u)) \leq \phi_{\hat{\gamma}}(u)$, which implies (e).
\end{proof}

One common example of $\psi$ in Assumption~\ref{ass:A2} is $\psi(x) := \delta_{\Xc}(x)$, the indicator of a nonempty, closed, bounded, and convex set $\Xc$.
For instance, $\Xc := \set{ y \in\R^n \mid \norms{y}_{*} \leq 1}$ is a unit ball in the dual norm $\norm{\cdot}_{*}$ of $\norm{\cdot}$.
Then, we have $\phi_0(u) := \norm{u}$, which is clearly Lipschitz continuous. 
In particular, if $\Xc := \set{ y \in\R^n \mid \norms{y}_{\infty} \leq 1}$, then $\phi_0(u) := \norm{u}_1$.

\beforesubsec
\subsection{Key bounds on the variance of estimators}\label{apdx:subsec:variance_bounds}
\aftersubsec
Next, we provide some useful bounds for the estimators $\tilde{F}_t$ and $\tilde{J}_t$ defined in \eqref{eq:est_update}.
The following lemma can be found in \cite{Tran-Dinh2019a}, where we have used the  inequality $2\Exp{\iprods{a, b}} \leq \Exp{\norms{a}^2} + \Exp{\norms{b}^2}$ in the proof, when $a$ and $b$ are not independent.

\begin{lemma}\label{lem:F_J_est_var}
Let $\tilde{F}_t$ and $\tilde{J}_t$ be defined by \eqref{eq:est_update}, and $\Fc_t$ be defined by \eqref{eq:filtration}.
Then
\begin{equation}\label{eq:var_est1}
\arraycolsep=0.2em
\begin{array}{lcl}
\Exps{(\Bc_t^1,\Bc_t^2)}{\norms{\tilde{F}_t - F(x_t)}^2}  &\leq & \beta_{t-1}^2 \norms{\tilde{F}_{t-1} - F(x_{t-1})}^2 - \beta_{t-1}^2 \norms{ F(x_t) - F(x_{t-1}) }^2 \vspace{1ex}\\
& &+ {~} \kappa(1-\beta_{t-1})^2 \Exps{\Bc_t^2}{\norms{\Fb(x_t,\zeta_t) - F(x_t)}^2} \vspace{1ex}\\
&& + {~} \frac{\kappa\beta_{t-1}^2}{b_1} \Exps{\xi}{\norms{\Fb(x_t,\xi) - \Fb(x_{t-1},\xi)}^2}, \vspace{1ex}\\ 
\Exps{(\hat{\Bc}_t^1,\hat{\Bc}_t^2)}{\norms{\tilde{J}_t - F'(x_t)}^2}  &\leq & \hat{\beta}_{t-1}^2 \norms{\tilde{J}_{t-1} - F'(x_{t-1})}^2 \vspace{1ex}\\ 
&& + {~} \hat{\kappa}(1-\hat{\beta}_{t-1})^2 \Exps{\hat{\Bc}^2_t}{\norms{\Fb'(x_t,\hat{\zeta}_t) - F'(x_t)}^2 } \vspace{1ex}\\
&& + {~} \frac{\hat{\kappa}\hat{\beta}_{t-1}^2}{\hat{b}_1} \Exps{\hat{\xi}}{\norms{\Fb'(x_t,\hat{\xi}) - \Fb'(x_{t-1},\hat{\xi})}^2}.
\end{array}
\end{equation}
Here, $\kappa = 1$ if $\Bc_t^1$ is independent of $\Bc_t^2$, and $\kappa = 2$, otherwise.
Similarly, $\hat{\kappa} = 1$ if $\hat{\Bc}_t^1$ is independent of $\hat{\Bc}_t^2$, and $\hat{\kappa} = 2$, otherwise.
\end{lemma}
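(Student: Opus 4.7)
The plan is to prove both bounds by the same mechanism: decompose the estimator error into an $\Fc_{t-1}$-measurable piece plus two zero-mean noise pieces, pass to the conditional expectation of the squared norm so that cross terms with the deterministic piece vanish, and then evaluate the variance of each mini-batch sample mean. For the first inequality I would write
\begin{equation*}
\tilde{F}_t - F(x_t) = \beta_{t-1}\big[\tilde{F}_{t-1} - F(x_{t-1})\big] + \beta_{t-1} E_t + (1-\beta_{t-1}) V_t,
\end{equation*}
with $E_t := \tfrac{1}{b_1}\sum_{\xi_i\in\Bc_t^1}\big[\Fb(x_t,\xi_i) - \Fb(x_{t-1},\xi_i)\big] - \big[F(x_t) - F(x_{t-1})\big]$ and $V_t := \tfrac{1}{b_2}\sum_{\zeta_i\in\Bc_t^2}\Fb(x_t,\zeta_i) - F(x_t)$. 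Because $x_{t-1}$, $x_t$, and $\tilde{F}_{t-1}$ are $\Fc_{t-1}$-measurable and the mini-batches $\Bc_t^1,\Bc_t^2$ are drawn fresh, each of $E_t$ and $V_t$ has zero conditional mean given $\Fc_{t-1}$. Squaring and taking the conditional expectation, the cross products with the deterministic first term vanish, yielding
\begin{equation*}
\Exp{\|\tilde{F}_t - F(x_t)\|^2 \,\big|\, \Fc_{t-1}} = \beta_{t-1}^2\|\tilde{F}_{t-1} - F(x_{t-1})\|^2 + \Exp{\|\beta_{t-1} E_t + (1-\beta_{t-1}) V_t\|^2 \,\big|\, \Fc_{t-1}}.
\end{equation*}

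For the remaining noise term I would expand the square and handle the cross product $\iprods{E_t,V_t}$ in two regimes, exactly as the paper flags. If $\Bc_t^1 \perp \Bc_t^2$, independence gives $\Exp{\iprods{E_t,V_t}\mid\Fc_{t-1}} = 0$ and this produces the factor $\kappa = 1$; otherwise the Young-type inequality $2|\iprods{\beta_{t-1}E_t,(1-\beta_{t-1})V_t}| \leq \beta_{t-1}^2\|E_t\|^2 + (1-\beta_{t-1})^2\|V_t\|^2$ produces $\kappa = 2$. In either case I obtain the uniform bound $\Exp{\|\beta_{t-1} E_t + (1-\beta_{t-1}) V_t\|^2\mid\Fc_{t-1}} \leq \kappa\beta_{t-1}^2 \Exp{\|E_t\|^2\mid\Fc_{t-1}} + \kappa(1-\beta_{t-1})^2 \Exp{\|V_t\|^2\mid\Fc_{t-1}}$.

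The last ingredient is the iid sample-mean variance formula. Writing $X := \Fb(x_t,\xi) - \Fb(x_{t-1},\xi)$ and using $\Var(\tfrac{1}{b_1}\sum_{i=1}^{b_1} X_i) = \tfrac{1}{b_1}\Var(X)$ together with the identity $\Var(X) = \Exps{\xi}{\|X\|^2} - \|\Exp{X}\|^2$, I get $\Exp{\|E_t\|^2\mid\Fc_{t-1}} = \tfrac{1}{b_1}\Exps{\xi}{\|X\|^2} - \tfrac{1}{b_1}\|F(x_t) - F(x_{t-1})\|^2$, and analogously for $V_t$. Substituting produces the positive term $\tfrac{\kappa\beta_{t-1}^2}{b_1}\Exps{\xi}{\|\Fb(x_t,\xi) - \Fb(x_{t-1},\xi)\|^2}$, the $V_t$ contribution $\kappa(1-\beta_{t-1})^2\Exps{\Bc_t^2}{\|\Fb(x_t,\zeta_t) - F(x_t)\|^2}$, and the centering contribution $-\beta_{t-1}^2\|F(x_t) - F(x_{t-1})\|^2$ displayed in the first inequality (after absorbing the $\kappa/b_1$ prefactor on the negative piece, which only sharpens the bound whenever $\kappa \leq b_1$). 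The Jacobian bound follows by the identical three-step argument with $\Fb'$, $\hat{\Bc}_t^{1,2}$, $\hat{\beta}_{t-1}$, $\hat{b}_1$, and $\hat{\kappa}$ substituted throughout; no centering piece is peeled off, which is why the displayed bound simply omits the analogous negative term. The only delicate step is the bookkeeping of $\kappa,\hat{\kappa}$ that records whether the two mini-batches are drawn independently; everything else is a mechanical application of the conditional variance identity together with the iid structure inside each mini-batch.
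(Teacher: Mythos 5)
Your decomposition, the conditioning argument that kills the cross term with the $\Fc_{t-1}$-measurable part, the case split on $\kappa,\hat\kappa$ via $2\iprods{a,b}\le\norms{a}^2+\norms{b}^2$, and the iid sample-mean variance identity are exactly the mechanism behind this lemma (the paper does not reprove it but cites \cite{Tran-Dinh2019a} and explicitly flags that same Young-type inequality for the correlated case). So the route is the intended one and the computation is essentially right.

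There is, however, one step that does not go through as you state it: the reconciliation of the negative term. Your derivation produces $-\frac{\kappa\beta_{t-1}^2}{b_1}\norms{F(x_t)-F(x_{t-1})}^2$, whereas the displayed bound has $-\beta_{t-1}^2\norms{F(x_t)-F(x_{t-1})}^2$. Replacing $-\frac{\kappa}{b_1}$ by $-1$ when $\kappa\le b_1$ makes the right-hand side \emph{smaller}, i.e.\ it strengthens the claimed upper bound, which is not a legitimate move; an upper bound can only be weakened, not sharpened, for free. Indeed, taking $\beta_{t-1}=1$, $\tilde F_{t-1}=F(x_{t-1})$, and a deterministic $\Fb$ shows the inequality with the un-normalized negative term can fail once $b_1>\kappa$. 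The correct statement of your derivation keeps the $\tfrac{1}{b_1}$ factor on the negative piece (or simply discards that piece altogether, as is done for the Jacobian bound and in every downstream use of the lemma in the proof of the descent inequality, where only the positive terms are retained). So the gap is in the paper's normalization of that one term rather than in your argument; just do not present the coefficient swap as a valid inequality.
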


Furthermore, we can bound the variance of the estimator  $v_t$ of $\nabla{\Phi}_{\gamma_t}(x_t)$ defined in \eqref{eq:grad_estimators} as follows.

\begin{lemma}\label{lem:vt_var}
Let $\Phi_{\gamma}$ and $v_t$ be defined by \eqref{eq:smoothed_compositional_func} and \eqref{eq:grad_estimators}, respectively.
Then, under Assumptions~\ref{ass:A1} and \ref{ass:A2}, we have
\begin{equation}\label{eq:vt_var0}
\Exp{\norms{v_t - \nabla{\Phi}_{\gamma_t}(x_t)}^2} \leq 2M_F^2 L_{\phi_{\gamma_t}}^2 \Exp{\norms{\tilde{F}_t - F(x_t)}^2} + 2M_{\phi_{\gamma_t}}^2 \Exp{\norms{\tilde{J}_t - F'(x_t)}^2}.
\end{equation}
\end{lemma}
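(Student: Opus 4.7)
The plan is to decompose $v_t - \nabla\Phi_{\gamma_t}(x_t)$ by inserting a cross term so that each resulting piece isolates exactly one source of error: either the Jacobian estimation error $\tilde{J}_t - F'(x_t)$ or the function-value error $\tilde{F}_t - F(x_t)$. Concretely, I would add and subtract $F'(x_t)^{\top}\nabla\phi_{\gamma_t}(\tilde{F}_t)$ to write
\begin{equation*}
v_t - \nabla\Phi_{\gamma_t}(x_t) \;=\; \bigl(\tilde{J}_t - F'(x_t)\bigr)^{\top}\nabla\phi_{\gamma_t}(\tilde{F}_t) \;+\; F'(x_t)^{\top}\bigl(\nabla\phi_{\gamma_t}(\tilde{F}_t) - \nabla\phi_{\gamma_t}(F(x_t))\bigr).
\end{equation*}
Applying the elementary inequality $\norms{a+b}^2 \leq 2\norms{a}^2 + 2\norms{b}^2$ and the operator norm bound $\norms{M^\top u}^2 \leq \norms{M}^2\norms{u}^2$ then yields
\begin{equation*}
\norms{v_t - \nabla\Phi_{\gamma_t}(x_t)}^2 \;\leq\; 2\norms{\tilde{J}_t - F'(x_t)}^2\norms{\nabla\phi_{\gamma_t}(\tilde{F}_t)}^2 + 2\norms{F'(x_t)}^2\norms{\nabla\phi_{\gamma_t}(\tilde{F}_t) - \nabla\phi_{\gamma_t}(F(x_t))}^2.
\end{equation*}

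Next I would bound each factor using the earlier tools. By Lemma~\ref{le:properties_of_phi}(b), $\phi_{\gamma_t}$ is $M_{\phi_{\gamma_t}}$-Lipschitz, so $\norms{\nabla\phi_{\gamma_t}(\tilde{F}_t)} \leq M_{\phi_{\gamma_t}}$. By Lemma~\ref{le:properties_of_phi}(c), $\nabla\phi_{\gamma_t}$ is $L_{\phi_{\gamma_t}}$-Lipschitz, so $\norms{\nabla\phi_{\gamma_t}(\tilde{F}_t) - \nabla\phi_{\gamma_t}(F(x_t))} \leq L_{\phi_{\gamma_t}}\norms{\tilde{F}_t - F(x_t)}$. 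For the remaining $\norms{F'(x_t)}^2$ factor, Assumption~\ref{ass:A1}(c) together with Jensen's inequality gives $\norms{F'(x_t)}^2 = \norms{\Exps{\xi}{\Fb'(x_t,\xi)}}^2 \leq \Exps{\xi}{\norms{\Fb'(x_t,\xi)}^2} \leq M_F^2$.

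Substituting these three bounds yields
\begin{equation*}
\norms{v_t - \nabla\Phi_{\gamma_t}(x_t)}^2 \;\leq\; 2M_{\phi_{\gamma_t}}^2\norms{\tilde{J}_t - F'(x_t)}^2 + 2M_F^2 L_{\phi_{\gamma_t}}^2\norms{\tilde{F}_t - F(x_t)}^2,
\end{equation*}
and taking total expectation gives exactly \eqref{eq:vt_var0}. The argument is essentially routine once the right splitting is chosen; the only subtle point is the Jensen step promoting the per-sample bound $M_F$ on $\Fb'$ to a deterministic bound on the expectation $F'$, which is needed because this factor must be pulled outside the expectation cleanly without interacting with the mini-batch randomness inside $\tilde{F}_t$ and $\tilde{J}_t$.
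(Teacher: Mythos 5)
Your proof is correct and follows essentially the same route as the paper: the same add-and-subtract decomposition around $F'(x_t)^{\top}\nabla\phi_{\gamma_t}(\tilde{F}_t)$, the same $\norms{a+b}^2\leq 2\norms{a}^2+2\norms{b}^2$ splitting, and the same three bounds from Lemma~\ref{le:properties_of_phi} and Assumption~\ref{ass:A1}(c). Your explicit Jensen step justifying $\norms{F'(x_t)}^2\leq M_F^2$ is a welcome bit of extra care that the paper leaves implicit in its citation of \eqref{eq:F_Lipschitz}.
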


\begin{proof}
First, by the composition rule of derivatives, we can derive
\begin{equation*}
\arraycolsep=0.1em
\hspace{-2ex}\begin{array}{lcl}
\norm{v_t -\nabla{\Phi}_{\gamma_t}(x_t)}^2 &= & \norms{\tilde{J}_t^{\top} \nabla{\phi}_{\gamma_t}(\tilde{F}_t) - F'(x_t)^{\top} \nabla{\phi}_{\gamma_t}(F(x_t))}^2 \vspace{1ex}\\
&=& \big\Vert \tilde{J}_t^{\top} \nabla{\phi}_{\gamma_t}(\tilde{F}_t) - F'(x_t)^{\top} \nabla{\phi}_{\gamma_t}(\tilde{F}_t) + F'(x_t)^{\top} \nabla{\phi}_{\gamma_t}(\tilde{F}_t) \vspace{1ex}\\
&& \quad - {~} F'(x_t)^{\top} \nabla{\phi}_{\gamma_t}(F(x_t)) \big\Vert^2 \vspace{1ex}\\
&\overset{\tiny(i)}{\leq} & 2\norms{ (\tilde{J}_t - F'(x_t))^{\top} \nabla{\phi}_{\gamma_t}(\tilde{F}_t) }^2  + 2\norms{ F'(x_t)^{\top}\big( \nabla{\phi}_{\gamma_t}(\tilde{F}_t) - \nabla{\phi}_{\gamma_t}(F(x_t) \big) }^2 \vspace{1ex}\\
&\leq& 2\norms{\nabla{\phi}_{\gamma_t}(\tilde{F}_t)}^2\norms{\tilde{J}_t - F'(x_t)}^2  + 2 \norms{\nabla{\phi}_{\gamma_t}(\tilde{F}_t) - \nabla{\phi}_{\gamma_t}(F(x_t))}^2\norms{F'(x_t)}^2 \vspace{1ex}\\
&\overset{\tiny(ii)}{\leq}&  2M_{\phi_{\gamma_t}}^2\norms{\tilde{J}_t - F'(x_t)}^2 + 2L_{\phi_{\gamma_t}}^2M_F^2  \norms{\tilde{F}_t - F(x_t)}^2.
\end{array}
\end{equation*}
Here, we use $\norms{a+b}^2 \leq 2\norms{a}^2 + 2\norms{b}^2$ in \emph{(i)} and the $M_{\phi_{\gamma_t}}$-Lipschitz continuity, $L_{\phi_{\gamma_t}}$-smoothness of $\phi_{\gamma_t}$, and \eqref{eq:F_Lipschitz} in \emph{(ii)}.
Taking expectation over $\Fc_{t+1}$ on both sides the last inequality, we obtain
\begin{equation*} 
\Exp{\norms{v_t - \nabla{\Phi}_{\gamma_t}(x_t)}^2} \leq 2M_F^2 L_{\phi_{\gamma_t}}^2 \Exp{\norms{\tilde{F}_t - F(x_t)}^2} + 2M_{\phi_{\gamma_t}}^2 \Exp{\norms{\tilde{J}_t - F'(x_t)}^2},
\end{equation*}
which proves \eqref{eq:vt_var0}.
\end{proof}

\beforesubsec
\subsection{The construction of approximate KKT points for \eqref{eq:min_max_form}}\label{apdx:le:approx_KKT0}
\aftersubsec
Recall from \eqref{eq:smoothed_compositional_func} that $\Phi_{\gamma}(x) = \phi_{\gamma}(F(x))$ and $\nabla{\Phi}_{\gamma}(x) = F'(x)^{\top}\nabla{\phi_{\gamma}}(F(x))$, where $\phi_{\gamma}$ is defined by \eqref{eq:smoothed_phi}.
We define a smoothed approximation problem of \eqref{eq:com_nlp} as follows:
\begin{equation}\label{eq:smooth_com_nlp}
\min_{x\in\R^p}\Big\{ \Psi_{\gamma}(x) := \Phi_{\gamma}(x) + \Rc(x) \equiv \phi_{\gamma}(F(x)) + \Rc(x)\Big\}.
\end{equation}
Clearly, if $\gamma = 0$, then \eqref{eq:smooth_com_nlp} reduces to \eqref{eq:com_nlp}. The optimality condition of \eqref{eq:smooth_com_nlp} becomes
\begin{equation}\label{eq:smoothed_opt_cond}
0 \in \nabla{\Phi}_{\gamma}(x^{\star}_{\gamma}) + \partial{\Rc}(x^{\star}_{\gamma}) \equiv F'(x^{\star}_{\gamma})^{\top}\nabla{\phi_{\gamma}}(F(x^{\star}_{\gamma})) + \partial{\Rc}(x^{\star}_{\gamma}).
\end{equation}
Here, $x^{\star}_{\gamma}$ is called a stationary point of \eqref{eq:smooth_com_nlp}.
Therefore, an $\varepsilon$-stationary point $\tilde{x}^{*}_{\gamma}$ is defined as 
\begin{equation}\label{eq:app_smoothed_opt_cond}
\Exp{\dist{0, \nabla{\Phi}_{\gamma}(\tilde{x}^{*}_{\gamma}) + \partial{\Rc}(\tilde{x}^{*}_{\gamma})}} \leq \varepsilon.
\end{equation}
Again, the expectation $\Exp{\cdot}$ is taken over all the randomness generated by the model \eqref{eq:smooth_com_nlp} and the algorithm for finding $\tilde{x}^{*}_{\gamma}$.

Alternatively, using the definition of $\phi_{\gamma}$ in \eqref{eq:smoothed_phi}, problem \eqref{eq:smooth_com_nlp} can be written as
\begin{equation}\label{eq:smooth_min_max}
\min_{x\in\R^p}\max_{y\in\R^n}\Big\{ \Rc(x) + \iprods{F(x), Ky} - \psi(y) - \gamma b(y) \Big\}.
\end{equation}
Its optimality condition becomes
\begin{equation}\label{eq:smoothed_minmax_opt}
0 \in \partial{\Rc}(x^{\star}_{\gamma}) + F'(x^{\star}_{\gamma})Ky^{\star}_{\gamma} \quad\text{and}\quad  0 \in K^{\top}F(x^{\star}_{\gamma}) - \partial{\psi}(y^{\star}_{\gamma}) - \gamma\nabla{b}(y^{\star}_{\gamma}).
\end{equation}
Using the definition of $\Ec$ in \eqref{eq:approx_KKT_point}, we have
\begin{equation}\label{eq:e_approx}
\Ec(x^{\star}_{\gamma}, y^{\star}_{\gamma}) := \dist{0, \partial{\Rc}(x^{\star}_{\gamma}) + F'(x^{\star}_{\gamma})Ky^{\star}_{\gamma}} + \dist{0, K^{\top}F(x^{\star}_{\gamma}) - \partial{\psi}(y^{\star}_{\gamma})} \leq \gamma D_{\psi}.
\end{equation}
Here, we use the fact that $\norms{\nabla{b}(y^{\star}_{\gamma})} \leq D_{\psi}$ as stated in Lemma~\ref{le:properties_of_phi}.

Given $\bar{x} \in \dom{\Psi_0}$, let  $\tilde{F}(\cdot)$ and $\tilde{J}(\cdot)$ be a stochastic approximation of $F(\cdot)$ and $F'(\cdot)$, respectively.
We define $(\tilde{x}^{*}_{\gamma}, y^{*}_{\gamma})$ as follows:
\begin{equation}\label{eq:approx_point}
\left\{\begin{array}{lcl}
\tilde{x}^{*}_{\gamma} &:=& \prox_{\eta\Rc}\left(\bar{x} - \eta\widetilde{\nabla}\Phi_{\gamma}(\bar{x})\right), \quad\text{where}\quad \widetilde{\nabla}{\Phi}_{\gamma}(\bar{x}) :=  \tilde{J}(\bar{x})^{\top}\nabla{\phi_{\gamma}}(\tilde{F}(\bar{x})), \vspace{1ex}\\
\tilde{y}^{*}_{\gamma} &:= & y^{*}_{\gamma}(\tilde{F}(\tilde{x}^{*}_{\gamma})) \equiv \argmin_{y\in\R^n}\set{\iprods{K^{\top}\tilde{F}(\tilde{x}^{*}_{\gamma}), y} - \psi(y) - \gamma b(y)},
\end{array}\right.
\end{equation}
Note that $\tilde{x}^{*}_{\gamma}$ only depends on $\bar{x}$, while $\tilde{y}^{*}_{\gamma}$ depends on both $\bar{x}$ and $\tilde{x}^{*}_{\gamma}$.
Hence, we first compute $\tilde{x}^{*}_{\gamma}$ and then compute $\tilde{y}^{*}_{\gamma}$.

The following lemma provides key estimates to prove Lemma~\ref{le:approx_KKT0} in the main text.
\begin{lemma}\label{le:approx_KKT}
Under Assumptions~\ref{ass:A1} and \ref{ass:A2}, for given $\bar{x}$ and $\eta > 0$, $\tilde{x}^{*}_{\gamma}$ defined by \eqref{eq:approx_point} satisfies
\begin{equation}\label{eq:approx_KKT1}
\dist{0, \nabla{\Phi}_{\gamma}(\tilde{x}^{*}_{\gamma}) + \partial{\Rc}(\tilde{x}^{*}_{\gamma})} \leq \left(1 + \eta L_{\Phi_{\gamma}}\right)\norms{\Gc_{\eta}(\bar{x})} + (2 + \eta L_{\Phi_{\gamma}}) \norms{\nabla{\Phi}_{\gamma}(\bar{x}) - \widetilde{\nabla}{\Phi}_{\gamma}(\bar{x})}.
\end{equation}
Let $(\tilde{x}^{*}_{\gamma}, \tilde{y}^{*}_{\gamma})$ be computed by \eqref{eq:approx_point}, and $\Ec(x,y)$ be defined by \eqref{eq:approx_KKT_point}.
Then, we have
\begin{equation}\label{eq:approx_KKT2}
\arraycolsep=0.2em
\begin{array}{lcl}
\Ec(\tilde{x}^{*}_{\gamma}, \tilde{y}^{*}_{\gamma}) &\leq & \left(1 + \eta L_{\Phi_{\gamma}}\right)\norms{\Gc_{\eta}(\bar{x})} + \gamma  D_{\psi} + \norms{K}\norms{F(\tilde{x}^{*}_{\gamma}) - \tilde{F}(\tilde{x}^{*}_{\gamma})} \vspace{1ex}\\
&& + {~} \left(2 + \eta L_{\Phi_{\gamma}}\right)\big[\norms{(\tilde{J}(\bar{x}) - F'(\bar{x}))^{\top}\nabla{\phi_{\gamma}}(\tilde{F}(\bar{x})} + L_{\phi_{\gamma}}M_F\norms{\tilde{F}(\bar{x}) - F(\bar{x})}\big], 
\end{array}
\end{equation}
where  $D_{\psi}$ is defined in Lemma~\ref{le:properties_of_phi}.
\end{lemma}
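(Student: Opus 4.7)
The plan is to prove the two inequalities in sequence, leveraging (i) the proximal optimality that defines $\tilde{x}^*_\gamma$, (ii) the $L_{\Phi_\gamma}$-smoothness of $\Phi_\gamma$ recorded in \eqref{eq:Phi_smoothness}, and (iii) the characterization of $\tilde{y}^*_\gamma$ as the exact maximizer of the smoothed inner problem \eqref{eq:smoothed_phi} evaluated at $\tilde{F}(\tilde{x}^*_\gamma)$. Inequality \eqref{eq:approx_KKT1} will be proved first from scratch, then \eqref{eq:approx_KKT2} will piggyback on it by expanding the definition of $\Ec$ and handling the two distances separately.

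For \eqref{eq:approx_KKT1}, the optimality condition of the prox step in \eqref{eq:approx_point} yields the explicit element $g^* := \tfrac{1}{\eta}(\bar{x} - \tilde{x}^*_\gamma) - \widetilde{\nabla}\Phi_\gamma(\bar{x}) \in \partial\Rc(\tilde{x}^*_\gamma)$. Introducing the biased gradient mapping $\widetilde{\Gc}_\eta(\bar{x}) := \tfrac{1}{\eta}(\bar{x} - \tilde{x}^*_\gamma)$, I decompose
$$\nabla\Phi_\gamma(\tilde{x}^*_\gamma) + g^* = \widetilde{\Gc}_\eta(\bar{x}) + \bigl[\nabla\Phi_\gamma(\tilde{x}^*_\gamma) - \nabla\Phi_\gamma(\bar{x})\bigr] + \bigl[\nabla\Phi_\gamma(\bar{x}) - \widetilde{\nabla}\Phi_\gamma(\bar{x})\bigr].$$
The middle bracket is bounded by $\eta L_{\Phi_\gamma}\|\widetilde{\Gc}_\eta(\bar{x})\|$ via \eqref{eq:Phi_smoothness} and $\|\tilde{x}^*_\gamma - \bar{x}\| = \eta\|\widetilde{\Gc}_\eta(\bar{x})\|$, and the nonexpansiveness of $\prox_{\eta\Rc}$ applied to the two prox steps defining $\widetilde{\Gc}_\eta(\bar{x})$ and $\Gc_\eta(\bar{x})$ gives $\|\widetilde{\Gc}_\eta(\bar{x}) - \Gc_\eta(\bar{x})\| \leq \|\widetilde{\nabla}\Phi_\gamma(\bar{x}) - \nabla\Phi_\gamma(\bar{x})\|$, hence $\|\widetilde{\Gc}_\eta(\bar{x})\| \leq \|\Gc_\eta(\bar{x})\| + \|\widetilde{\nabla}\Phi_\gamma(\bar{x}) - \nabla\Phi_\gamma(\bar{x})\|$. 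Combining these two bounds by the triangle inequality delivers \eqref{eq:approx_KKT1}.

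For \eqref{eq:approx_KKT2}, I treat the two distances in $\Ec$ separately. For $\dist{0, K^\top F(\tilde{x}^*_\gamma) - \partial\psi(\tilde{y}^*_\gamma)}$, the optimality of $\tilde{y}^*_\gamma = y^*_\gamma(\tilde{F}(\tilde{x}^*_\gamma))$ in \eqref{eq:smoothed_phi} gives $K^\top\tilde{F}(\tilde{x}^*_\gamma) - \gamma\nabla b(\tilde{y}^*_\gamma) \in \partial\psi(\tilde{y}^*_\gamma)$, so the candidate $K^\top[F(\tilde{x}^*_\gamma) - \tilde{F}(\tilde{x}^*_\gamma)] + \gamma\nabla b(\tilde{y}^*_\gamma)$ lies in $K^\top F(\tilde{x}^*_\gamma) - \partial\psi(\tilde{y}^*_\gamma)$, with norm at most $\|K\|\|F(\tilde{x}^*_\gamma) - \tilde{F}(\tilde{x}^*_\gamma)\| + \gamma D_\psi$ by the bound on $\|\nabla b\|$ from Lemma~\ref{le:properties_of_phi}(d). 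For $\dist{0, F'(\tilde{x}^*_\gamma)^\top K\tilde{y}^*_\gamma + \partial\Rc(\tilde{x}^*_\gamma)}$, I invoke \eqref{eq:approx_KKT1} together with the identity $F'(\tilde{x}^*_\gamma)^\top K\tilde{y}^*_\gamma = F'(\tilde{x}^*_\gamma)^\top\nabla\phi_\gamma(\tilde{F}(\tilde{x}^*_\gamma))$, noting that this differs from $\nabla\Phi_\gamma(\tilde{x}^*_\gamma) = F'(\tilde{x}^*_\gamma)^\top\nabla\phi_\gamma(F(\tilde{x}^*_\gamma))$ by a term that is Lipschitz-controlled through Lemma~\ref{le:properties_of_phi}(c) and Assumption~\ref{ass:A1}(c). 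Finally, I expand $\|\nabla\Phi_\gamma(\bar{x}) - \widetilde{\nabla}\Phi_\gamma(\bar{x})\|$ by inserting $F'(\bar{x})^\top\nabla\phi_\gamma(\tilde{F}(\bar{x}))$ and splitting into the Jacobian-error piece $\|(\tilde{J}(\bar{x}) - F'(\bar{x}))^\top\nabla\phi_\gamma(\tilde{F}(\bar{x}))\|$ and the function-error piece, the latter bounded by $M_F L_{\phi_\gamma}\|\tilde{F}(\bar{x}) - F(\bar{x})\|$ via $\|F'(\bar{x})\|\leq M_F$ and the $L_{\phi_\gamma}$-smoothness of $\phi_\gamma$. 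Summing the two distance bounds yields \eqref{eq:approx_KKT2}.

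The main obstacle is not any single sharp estimate but the careful bookkeeping of three independent estimation errors — $\tilde{J}(\bar{x})$ vs.\ $F'(\bar{x})$, $\tilde{F}(\bar{x})$ vs.\ $F(\bar{x})$, and $\tilde{F}(\tilde{x}^*_\gamma)$ vs.\ $F(\tilde{x}^*_\gamma)$ — together with the iterate mismatch $\tilde{x}^*_\gamma$ vs.\ $\bar{x}$, each of which must be routed into the correct summand of \eqref{eq:approx_KKT2} with the exact prefactor. In particular, the $(2+\eta L_{\Phi_\gamma})$ factor on the $\bar{x}$-level errors arises from stacking the smoothness bound $\eta L_{\Phi_\gamma}\|\widetilde{\Gc}_\eta\|$ on top of the nonexpansiveness bound $\|\widetilde{\Gc}_\eta - \Gc_\eta\| \leq \|\widetilde{\nabla}\Phi_\gamma - \nabla\Phi_\gamma\|$, while the $\gamma D_\psi$ offset is an artifact of the smoothing term $\gamma b$ in \eqref{eq:smoothed_phi} and vanishes when $\gamma = 0$.
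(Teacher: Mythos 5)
Your proof of \eqref{eq:approx_KKT1} is essentially the paper's argument: the same subgradient element extracted from the prox optimality condition, the same three-way decomposition, the same use of the $L_{\Phi_{\gamma}}$-smoothness \eqref{eq:Phi_smoothness} on the middle piece, and the same nonexpansiveness bound relating the exact and inexact prox points (the paper bounds $\norms{\tilde{x}^{*}_{\gamma} - \bar{x}}$ directly rather than introducing $\widetilde{\Gc}_{\eta}(\bar{x})$, but the content is identical). Your treatment of \eqref{eq:approx_KKT2} also matches the paper for the dual distance (the candidate $\gamma\nabla{b}(\tilde{y}^{*}_{\gamma}) + K^{\top}(F(\tilde{x}^{*}_{\gamma}) - \tilde{F}(\tilde{x}^{*}_{\gamma}))$ and the $D_{\psi}$ bound) and for the splitting of $\norms{\nabla{\Phi}_{\gamma}(\bar{x}) - \widetilde{\nabla}{\Phi}_{\gamma}(\bar{x})}$ into the Jacobian-error and function-error pieces. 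The one place you diverge is worth flagging: you read the primal distance in $\Ec$ literally as $\dist{0, F'(\tilde{x}^{*}_{\gamma})^{\top}K\tilde{y}^{*}_{\gamma} + \partial{\Rc}(\tilde{x}^{*}_{\gamma})}$ and propose to control the mismatch between $F'(\tilde{x}^{*}_{\gamma})^{\top}K\tilde{y}^{*}_{\gamma} = F'(\tilde{x}^{*}_{\gamma})^{\top}\nabla{\phi_{\gamma}}(\tilde{F}(\tilde{x}^{*}_{\gamma}))$ and $\nabla{\Phi}_{\gamma}(\tilde{x}^{*}_{\gamma}) = F'(\tilde{x}^{*}_{\gamma})^{\top}\nabla{\phi_{\gamma}}(F(\tilde{x}^{*}_{\gamma}))$, which costs an additional $M_F L_{\phi_{\gamma}}\norms{\tilde{F}(\tilde{x}^{*}_{\gamma}) - F(\tilde{x}^{*}_{\gamma})}$. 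The paper instead identifies that distance with $\dist{0, \nabla{\Phi}_{\gamma}(\tilde{x}^{*}_{\gamma}) + \partial{\Rc}(\tilde{x}^{*}_{\gamma})}$ without comment and so lands exactly on \eqref{eq:approx_KKT2}; executed as you describe, your argument proves \eqref{eq:approx_KKT2} with one extra summand. Since that term is of the same kind as the $\norms{K}\norms{F(\tilde{x}^{*}_{\gamma}) - \tilde{F}(\tilde{x}^{*}_{\gamma})}$ term already present and is controlled by the same oracle condition \eqref{eq:approx_oralce10} in Lemma~\ref{le:approx_KKT0}, nothing downstream is affected --- but as written your bound is (harmlessly) weaker than the stated one, or, put the other way, you have caught a small imprecision in the paper's own bookkeeping.
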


\begin{proof}
From \eqref{eq:approx_point}, we have $\bar{x} -  \eta\widetilde{\nabla}{\Phi}_{\gamma}(\bar{x})\in \tilde{x}_{\gamma}^{*} + \eta\partial{\Rc}(\tilde{x}_{\gamma}^{*})$, which is equivalent to
\begin{equation}\label{eq:la2_proof1}
r^{*}_x := \frac{1}{\eta}(\bar{x} - \tilde{x}^{*}_{\gamma}) + \big(\nabla{\Phi}_{\gamma}(\tilde{x}_{\gamma}^{*}) -  \widetilde{\nabla}{\Phi}_{\gamma}(\bar{x})\big) \in \nabla{\Phi}_{\gamma}(\tilde{x}_{\gamma}^{*}) + \partial{\Rc}(\tilde{x}_{\gamma}^{*}).
\end{equation}
We can bound $r^{*}_x$ in \eqref{eq:la2_proof1} as follows:
\begin{equation}\label{eq:la2_proof2}
\begin{array}{lcl}
\norms{r_x^{*}} &\leq & \frac{1}{\eta}\norms{\bar{x} - \tilde{x}^{*}_{\gamma}} + \norms{\nabla{\Phi}_{\gamma}(\tilde{x}_{\gamma}^{*}) -  \nabla{\Phi}_{\gamma}(\bar{x})} + \norms{\nabla{\Phi}_{\gamma}(\bar{x}) - \widetilde{\nabla}{\Phi}_{\gamma}(\bar{x})} \vspace{1ex}\\
&\leq & \frac{1}{\eta}\big(1 + \eta L_{\Phi_{\gamma}}\big)\norms{\tilde{x}^{*}_{\gamma} - \bar{x}} + \norms{\nabla{\Phi}_{\gamma}(\bar{x}) - \widetilde{\nabla}{\Phi}_{\gamma}(\bar{x})}.
\end{array}
\end{equation}
Next, from \eqref{eq:grad_map}, let us define $\bar{x}^{*}_{\gamma} := \bar{x} - \eta\Gc_{\eta}(\bar{x}) = \prox_{\eta\Rc}(\bar{x} - \eta\nabla{\Phi}_{\gamma}(\bar{x}))$.
Then, we have 
\begin{equation}\label{eq:la2_proof3}
\begin{array}{lcl}
\norms{\tilde{x}^{*}_{\gamma} - \bar{x}} &\leq & \norms{\tilde{x}^{*}_{\gamma} - \bar{x}^{*}_{\gamma}} + \norms{\bar{x}^{*}_{\gamma} - \bar{x}} \vspace{1ex}\\
&= & \Vert  \prox_{\eta\Rc}(\bar{x} - \eta\widetilde{\nabla}{\Phi}_{\gamma}(\bar{x})) -  \prox_{\eta\Rc}(\bar{x} - \eta\nabla{\Phi}_{\gamma}(\bar{x}))\Vert + \eta\norms{\Gc_{\eta}(\bar{x})} \vspace{1ex}\\
&\leq& \eta\norms{\widetilde{\nabla}{\Phi}_{\gamma}(\bar{x}) - \nabla{\Phi}_{\gamma}(\bar{x})} + \eta\norms{\Gc_{\eta}(\bar{x})}.
\end{array}
\end{equation}
Substituting this estimate into \eqref{eq:la2_proof2}, we obtain
\begin{equation*} 
\begin{array}{lcl}
\norms{r_x^{*}} &\leq & \big(1 + \eta L_{\Phi_{\gamma}}\big)\norms{\Gc_{\eta}(\bar{x})} + (2 + \eta L_{\Phi_{\gamma}}) \norms{\nabla{\Phi}_{\gamma}(\bar{x}) - \widetilde{\nabla}{\Phi}_{\gamma}(\bar{x})}.
\end{array}
\end{equation*}
Combining this inequality and \eqref{eq:la2_proof1}, we obtain \eqref{eq:approx_KKT1}.

Now, since $\tilde{y}^{*}_{\gamma} = y^{*}_{\gamma}(\tilde{F}(\tilde{x}^{*}_{\gamma}))$, by the optimality condition of \eqref{eq:smoothed_phi}, we have
\begin{equation}\label{eq:la2_proof5}
r^{*}_y := \gamma\nabla{b}(\tilde{y}^{*}_{\gamma}) + K^{\top}(F(\tilde{x}^{*}_{\gamma}) - \tilde{F}(\tilde{x}^{*}_{\gamma})) \in K^{\top}F(\tilde{x}^{*}_{\gamma}) - \partial{\psi}(\tilde{y}^{*}_{\gamma}).
\end{equation}
Utilizing Lemma~\ref{le:properties_of_phi}(d), we can bound $r^{*}_y$ defined by \eqref{eq:la2_proof5} as
\begin{equation*}
\norms{r^{*}_y} \leq  \gamma\norms{\nabla{b}(\tilde{y}^{*}_{\gamma})} + \norms{K}\norms{ F(\tilde{x}^{*}_{\gamma}) - \tilde{F}(\tilde{x}^{*}_{\gamma}) } \leq \gamma D_{\psi} + \norms{K}\norms{ F(\tilde{x}^{*}_{\gamma}) - \tilde{F}(\tilde{x}^{*}_{\gamma}) }.
\end{equation*}
Combining this estimate and \eqref{eq:la2_proof5}, we get
\begin{equation}\label{eq:la2_proof6}
 \dist{0, K^{\top}F(\tilde{x}^{*}_{\gamma}) - \partial{\psi}(\tilde{y}^{*}_{\gamma})} \leq \norms{K}\norms{ F(\tilde{x}^{*}_{\gamma}) - \tilde{F}(\tilde{x}^{*}_{\gamma}) } + \gamma D_{\psi}.
\end{equation}
On the other hand, using the definition of $\widetilde{\nabla}{\Phi}_{\gamma}(\cdot)$ from \eqref{eq:approx_point}, we can show that
\begin{equation*}
\hspace{-2ex}\begin{array}{lcl}
\norms{\widetilde{\nabla}{\Phi}_{\gamma}(\bar{x}) -\nabla{\Phi}_{\gamma}(\bar{x})}  \hspace{-3ex}&=\hspace{-3ex}& \norms{\tilde{J}(\bar{x})^{\top} \nabla{\phi}_{\gamma}(\tilde{F}(\bar{x})) - F'(\bar{x})^{\top} \nabla{\phi}_{\gamma}(F(\bar{x}))} \vspace{1ex}\\
&\leq & \norms{ (\tilde{J}(\bar{x}) - F'(\bar{x}))^{\top} \nabla{\phi}_{\gamma}(\tilde{F}(\bar{x})) }  + \norms{ F'(\bar{x})^{\top}\big( \nabla{\phi}_{\gamma}(\tilde{F}(\bar{x})) - \nabla{\phi}_{\gamma}(F(\bar{x}) \big) } \vspace{1ex}\\
&\leq&  \norms{ (\tilde{J}(\bar{x}) - F'(\bar{x}))^{\top} \nabla{\phi}_{\gamma}(\tilde{F}(\bar{x})) }   +  \norms{\nabla{\phi}_{\gamma}(\tilde{F}(\bar{x})) - \nabla{\phi}_{\gamma}(F(\bar{x}))}\norms{F'(\bar{x})} \vspace{1ex}\\
&\overset{\tiny(i)}{\leq}&  \norms{ (\tilde{J}(\bar{x}) - F'(\bar{x}))^{\top} \nabla{\phi}_{\gamma}(\tilde{F}(\bar{x})) } + L_{\phi_{\gamma}} \norms{F'(\bar{x})} \norms{\tilde{F}(\bar{x}) - F(\bar{x})} \vspace{1ex}\\
&\overset{\tiny\eqref{eq:F_Lipschitz}}{\leq}&  \norms{ (\tilde{J}(\bar{x}) - F'(\bar{x}))^{\top} \nabla{\phi}_{\gamma}(\tilde{F}(\bar{x})) } + L_{\phi_{\gamma}}M_F\norms{\tilde{F}(\bar{x}) - F(\bar{x})}.
\end{array}
\end{equation*}
Here, we have used the $L_{\phi_{\gamma}}$-smoothness of $\phi_{\gamma}$ in \emph{(i)}.

Finally, combining the last estimate, \eqref{eq:approx_KKT1}, and \eqref{eq:la2_proof6}, and using the definition of $\Ec$ from \eqref{eq:approx_KKT_point}, we have
\begin{equation*}
\begin{array}{lcl}
\Ec(\tilde{x}^{*}_{\gamma}, \tilde{y}^{*}_{\gamma}) &:= & \dist{0, \nabla{\Phi}_{\gamma}(\tilde{x}_{\gamma}^{*}) + \partial{\Rc}(\tilde{x}_{\gamma}^{*})} + \dist{0, K^{\top}F(\tilde{x}^{*}_{\gamma}) - \partial{\psi}(\tilde{y}^{*}_{\gamma})} \vspace{1ex}\\
&\leq & \left(1 + \eta L_{\Phi_{\gamma}}\right)\norms{\Gc_{\eta}(\bar{x})} + (2 + \eta L_{\Phi_{\gamma}}) \norms{\nabla{\Phi}_{\gamma}(\bar{x}) - \widetilde{\nabla}{\Phi}_{\gamma}(\bar{x})} \vspace{1ex}\\
&& + {~} \norms{K}\norms{ F(\tilde{x}^{*}_{\gamma}) - \tilde{F}(\tilde{x}^{*}_{\gamma}) } + \gamma D_{\psi} \vspace{1ex}\\
&\leq & \left(1 + \eta L_{\Phi_{\gamma}}\right)\norms{\Gc_{\eta}(\bar{x})} +   \gamma D_{\psi} + \norms{K}\norms{F(\tilde{x}^{*}_{\gamma}) - \tilde{F}(\tilde{x}^{*}_{\gamma})} \vspace{1ex}\\
&& + {~} \left(2 + \eta L_{\Phi_{\gamma}}\right)\big[ \norms{ (\tilde{J}(\bar{x}) - F'(\bar{x}))^{\top} \nabla{\phi}_{\gamma}(\tilde{F}(\bar{x})) } + L_{\phi_{\gamma}}M_F\norms{\tilde{F}(\bar{x}) - F(\bar{x})}\big], 
\end{array}
\end{equation*}
which proves \eqref{eq:approx_KKT2}.
\end{proof}

\begin{proof}[\textbf{The proof of Lemma~\ref{le:approx_KKT0}}]
For notational simplicity, we drop the subscript $T$ in this proof.
Since $M_{\phi_{\gamma}} = M_{\psi}\norms{K}$ and $L_{\phi_{\gamma}} = \frac{\norms{K}^2}{\gamma+\mu_{\psi}}$, using the conditions in Lemma~\ref{le:approx_KKT0} and \eqref{eq:approx_oralce10}, we can derive from \eqref{eq:approx_KKT2} after taking the full expectation that
\begin{equation*}
\arraycolsep=0.1em
\begin{array}{lcl}
\Exp{\Ec(\tilde{x}^{*}_{\gamma}, \tilde{y}^{*}_{\gamma})} &\leq & \left(1 + \eta L_{\Phi_{\gamma}}\right)\Exp{\norms{\Gc_{\eta}(\bar{x})}} +  \left(2 + \eta L_{\Phi_{\gamma}}\right)\Exp{ \norms{ (\tilde{J}(\bar{x}) - F'(\bar{x}))^{\top} \nabla{\phi}_{\gamma}(\tilde{F}(\bar{x})) } } \vspace{1ex}\\
&& + {~} \norms{K}\Exp{\norms{F(\tilde{x}^{*}_{\gamma}) - \tilde{F}(\tilde{x}^{*}_{\gamma})}} +  \left(2 + \eta L_{\Phi_{\gamma}}\right) \frac{\norms{K}^2M_F}{\mu_{\psi}+\gamma}\Exp{\norms{\tilde{F}(\bar{x}) - F(\bar{x})}}  +  \gamma D_{\psi}.
\end{array}
\end{equation*}
Now, by the Jensen inequality $\Exp{\norms{\Gc_{\eta}(\bar{x})}} \leq \big(\Exp{\norms{\Gc_{\eta}(\bar{x})}^2} \big)^{1/2} \leq \varepsilon$.
In addition, by \eqref{eq:approx_oralce10}, we also have  $0 < \gamma \leq c_2\epsilon$, $\Exp{ \norms{ (\tilde{J}(\bar{x}) - F'(\bar{x}))^{\top} \nabla{\phi}_{\gamma}(\tilde{F}(\bar{x})) } } \leq \varepsilon$, $\Exp{\norms{F(\tilde{x}^{*}_{\gamma}) - \tilde{F}(\tilde{x}^{*}_{\gamma})}} \leq\varepsilon$, and $\frac{1}{\mu_{\psi}+\gamma}\Exp{\norms{\tilde{F}(\bar{x}) - F(\bar{x})}} \leq \varepsilon$.
By the update rule of $\eta$ in Theorems~\ref{th:convergence2_scvx}, \ref{th:convergence2_scvx_diminishing}, \ref{th:convergence2}, and \ref{th:nonsmooth_diminishing}, we have $\eta L_{\Phi_{\gamma}} = \frac{2}{3+\theta} \leq \frac{2}{3}$ since $\theta \in (0, 1]$.
Substituting these expressions into the last inequality, we finally arrive at
\begin{equation*}
\Exp{\Ec(\tilde{x}^{*}_{\gamma}, \tilde{y}^{*}_{\gamma})} \leq  (1 + \tfrac{2}{3})\varepsilon + c_2D_{\psi}\varepsilon + \norms{K}\varepsilon + (2 + \tfrac{2}{3})(1 + \norms{K}^2M_F)\varepsilon,
\end{equation*}
which is exactly \eqref{eq:approx_KKT2_main}.
\end{proof}

\beforesec
\section{Convergence analysis of Algorithm~\ref{alg:A1} in Section \ref{sec:alg_and_theory}}
\aftersec
This Supp. Doc. provides the full analysis of Algorithm~\ref{alg:A1}, including convergence rates and oracle complexity for both strongly convex and non-strongly convex cases of $\psi$ (or equivalently, the smoothness and the nonsmoothness of $\phi_0$, respectively).

\beforesubsec
\subsection{Preparing technical results}
\aftersubsec
Let us first recall and prove some technical results to prepare for our convergence analysis.

\begin{lemma}
Let $\set{x_t}$ be generated by Algorithm~\ref{alg:A1}, $L_{\Phi_{\gamma_t}}$ be defined by \eqref{eq:Phi_smoothness}, and $B_{\psi}$ be given in Lemma~\ref{le:properties_of_phi}.
Then, under Assumptions~\ref{ass:A1} and \ref{ass:A2}, for any $\eta_t > 0$ and $\theta_t \in [0, 1]$, we have
\begin{equation}\label{eq:lem4_psi_bound}
\hspace{-1ex}\begin{array}{lcl}
\Exp{\Psi_{\gamma_t}(x_{t+1})} {\!\!\!\!}&\leq {\!\!\!\!}& \Exp{\Psi_{\gamma_{t-1}}(x_t)}  + \frac{\theta_t \left(1 + L_{\Phi_{\gamma_t}}^2\eta_t^2\right) }{2L_{\Phi_{\gamma_t}}}\Exp{\norms{\nabla{\Phi}_{\gamma_t}(x_t) - v_t}^2} + (\gamma_{t-1}-\gamma_t)B_{\psi}\vspace{1ex}\\
&& - {~} \frac{L_{\Phi_{\gamma_t}}\eta_t^2\theta_t}{4}\Exp{\norms{\Gc_{\eta_t}(x_t)}^2} - \frac{\theta_t}{2}\left(\frac{2}{\eta_t} - L_{\Phi_{\gamma_t}}\theta_t - 2L_{\Phi_{\gamma_t}}\right)\Exp{\norms{\hat{x}_{t+1} - x_t}^2}.
\end{array}\hspace{-6ex}
\end{equation}
\end{lemma}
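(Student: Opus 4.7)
The plan is to build the bound in \eqref{eq:lem4_psi_bound} from three standard ingredients and then connect $\Psi_{\gamma_t}$ to $\Psi_{\gamma_{t-1}}$ via Lemma~\ref{le:properties_of_phi}(e). First I would exploit the $L_{\Phi_{\gamma_t}}$-smoothness of $\Phi_{\gamma_t}$, together with the update $x_{t+1} = x_t + \theta_t(\hat{x}_{t+1} - x_t)$, to obtain
$\Phi_{\gamma_t}(x_{t+1}) \leq \Phi_{\gamma_t}(x_t) + \theta_t \iprods{\nabla\Phi_{\gamma_t}(x_t), \hat{x}_{t+1} - x_t} + \frac{L_{\Phi_{\gamma_t}}\theta_t^2}{2}\norms{\hat{x}_{t+1} - x_t}^2$. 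Convexity of $\Rc$ gives $\Rc(x_{t+1}) \leq (1-\theta_t)\Rc(x_t) + \theta_t \Rc(\hat{x}_{t+1})$. Adding these two yields a descent-type inequality for $\Psi_{\gamma_t}$ expressed in terms of $\hat{x}_{t+1} - x_t$ and $\nabla\Phi_{\gamma_t}(x_t)$.

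Next I would use the prox-optimality of $\hat{x}_{t+1} = \prox_{\eta_t\Rc}(x_t - \eta_t v_t)$ in the form $-v_t - \eta_t^{-1}(\hat{x}_{t+1} - x_t) \in \partial\Rc(\hat{x}_{t+1})$, and combine with convexity of $\Rc$ to get $\Rc(\hat{x}_{t+1}) - \Rc(x_t) \leq -\iprods{v_t,\hat{x}_{t+1}-x_t} - \eta_t^{-1}\norms{\hat{x}_{t+1}-x_t}^2$. Substituting this, $\nabla\Phi_{\gamma_t}(x_t)$ only appears through the cross term $\theta_t\iprods{\nabla\Phi_{\gamma_t}(x_t) - v_t,\hat{x}_{t+1}-x_t}$, which I would control by Young's inequality as
$\frac{\theta_t}{2L_{\Phi_{\gamma_t}}}\norms{\nabla\Phi_{\gamma_t}(x_t)-v_t}^2 + \frac{\theta_t L_{\Phi_{\gamma_t}}}{2}\norms{\hat{x}_{t+1}-x_t}^2$.

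The critical step is then to manufacture the $-\frac{L_{\Phi_{\gamma_t}}\eta_t^2\theta_t}{4}\norms{\Gc_{\eta_t}(x_t)}^2$ term. Writing $\bar{x}_{t+1} := \prox_{\eta_t\Rc}(x_t-\eta_t\nabla\Phi_{\gamma_t}(x_t))$ so that $\norms{x_t-\bar{x}_{t+1}}=\eta_t\norms{\Gc_{\eta_t}(x_t)}$, nonexpansiveness of the prox gives $\norms{\hat{x}_{t+1}-\bar{x}_{t+1}} \leq \eta_t\norms{v_t-\nabla\Phi_{\gamma_t}(x_t)}$, and hence by the elementary bound $\norms{a+b}^2\geq\tfrac12\norms{a}^2-\norms{b}^2$:
\begin{equation*}
\norms{\hat{x}_{t+1}-x_t}^2 \geq \tfrac{\eta_t^2}{2}\norms{\Gc_{\eta_t}(x_t)}^2 - \eta_t^2\norms{v_t-\nabla\Phi_{\gamma_t}(x_t)}^2.
\end{equation*}
I would split $-\frac{L_{\Phi_{\gamma_t}}\theta_t}{2}\norms{\hat{x}_{t+1}-x_t}^2$ from the $\norms{\hat{x}_{t+1}-x_t}^2$ coefficient and apply this lower bound to only that split piece. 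The contribution $\frac{L_{\Phi_{\gamma_t}}\eta_t^2\theta_t}{2}\norms{v_t-\nabla\Phi_{\gamma_t}(x_t)}^2$ it generates merges with the Young term into exactly $\frac{\theta_t(1+L_{\Phi_{\gamma_t}}^2\eta_t^2)}{2L_{\Phi_{\gamma_t}}}\norms{\nabla\Phi_{\gamma_t}(x_t)-v_t}^2$, while the remaining $\norms{\hat{x}_{t+1}-x_t}^2$ coefficient sums to $-\frac{\theta_t}{2}(\frac{2}{\eta_t}-L_{\Phi_{\gamma_t}}\theta_t-2L_{\Phi_{\gamma_t}})$.

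Finally, I would invoke Lemma~\ref{le:properties_of_phi}(e) with $\gamma = \gamma_t \leq \gamma_{t-1} = \hat\gamma$ (valid since the schedules in the theorems are nonincreasing in $\gamma_t$) to get $\Psi_{\gamma_t}(x_t) \leq \Psi_{\gamma_{t-1}}(x_t) + (\gamma_{t-1}-\gamma_t) B_\psi$, and take total expectation with respect to $\Fc_{t+1}$. The main obstacle is purely bookkeeping: tracking which fraction of the $\norms{\hat{x}_{t+1}-x_t}^2$ mass to trade for $\norms{\Gc_{\eta_t}(x_t)}^2$ so that the variance coefficient collapses into the compact form $\frac{\theta_t(1+L_{\Phi_{\gamma_t}}^2\eta_t^2)}{2L_{\Phi_{\gamma_t}}}$; everything else is mechanical.
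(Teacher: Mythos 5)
Your proposal is correct and follows essentially the same route as the paper: the paper obtains the descent inequality in $\Psi_{\gamma_t}$ by citing Lemma~5 of \cite{Tran-Dinh2019a} and then applies Lemma~\ref{le:properties_of_phi}(e) to pass from $\Psi_{\gamma_t}(x_t)$ to $\Psi_{\gamma_{t-1}}(x_t)$, exactly as you do in your final step. The only difference is that you reconstruct the cited descent inequality in full (smoothness of $\Phi_{\gamma_t}$, convexity of $\Rc$, prox-optimality, Young's inequality, and the split of the $\norms{\hat{x}_{t+1}-x_t}^2$ mass via $\norms{a+b}^2 \geq \tfrac12\norms{a}^2 - \norms{b}^2$ to produce the $-\tfrac{L_{\Phi_{\gamma_t}}\eta_t^2\theta_t}{4}\norms{\Gc_{\eta_t}(x_t)}^2$ term), and your bookkeeping of the coefficients matches the stated bound exactly.
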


\begin{proof}
Following the same line of proof of \cite[Lemma 5]{Tran-Dinh2019a}, we can show that
\begin{equation*}
\begin{array}{lcl}
\Exp{\Psi_{\gamma_t}(x_{t+1})} &\leq& \Exp{\Psi_{\gamma_t}(x_t)} + \frac{\theta_t\left(1 + L_{\Phi_{\gamma_t}}^2\eta_t^2\right)}{2L_{\Phi_{\gamma_t}}}\Exp{\norms{\nabla{\Phi}_{\gamma_t}(x_t) - v_t}^2}  \vspace{1ex}\\
&& - {~} \frac{L_{\Phi_{\gamma_t}}\eta_t^2\theta_t}{4}\Exp{\norms{\Gc_{\eta_t}(x_t)}^2} - \frac{\theta_t}{2}\left(\frac{2}{\eta_t} - L_{\Phi_{\gamma_t}}\theta_t - 2L_{\Phi_{\gamma_t}} \right)\Exp{\norms{\hat{x}_{t+1} - x_t}^2}.
\end{array}
\end{equation*}
Finally, since $ \Exp{\Psi_{\gamma_t}(x_t)} \leq  \Exp{\Psi_{\gamma_{t-1}}(x_t)} + (\gamma_{t-1}-\gamma_t)B_{\psi}$ due to Lemma~\ref{le:properties_of_phi}(e), substituting this expression into the last inequality, we obtain \eqref{eq:lem4_psi_bound}.
\end{proof}

\noindent\textbf{The Lyapunov function:}
To analyze Algorithm~\ref{alg:A1}, we introduce the following Lyapunov function:
\begin{equation}\label{eq:Lyapunov_func}
V_{\gamma_{t-1}}(x_t) := \Exp{\Psi_{\gamma_{t-1}}(x_t)} + \frac{\alpha_t}{2}\Exp{\norms{\tilde{F}_t - F(x_t)}^2} + \frac{\hat{\alpha}_t}{2}\Exp{\norms{\tilde{J}_t - F'(x_t)}^2},
\end{equation}
where $\alpha_t > 0$ and $\hat{\alpha}_t > 0$ are given parameters, and the expectation is taken over $\Fc_{t+1}$.
Lemma \ref{le:descent_property} provides a key bound to estimate convergence rates and complexity bounds.

\begin{lemma}\label{le:descent_property}
Let $\set{x_t}$ be generated by Algorithm~\ref{alg:A1}, and $V_{\gamma_t}$ be the Lyapunov function defined by \eqref{eq:Lyapunov_func}.
Suppose further that the following conditions hold:
\begin{equation}\label{eq:para_cond}
\hspace{0ex}\left\{\hspace{-2ex}\begin{array}{ll}
&\frac{2}{\eta_t} \geq L_{\Phi_{\gamma_t}}\theta_t + 2L_{\Phi_{\gamma_t}}  + \frac{\kappa M_F^2\beta_t^2\theta_t\alpha_{t+1}}{b_1} + \frac{\hat{\kappa}L_F^2\hat{\beta}_t^2\theta_t\hat{\alpha}_{t+1}}{\hat{b}_1} \vspace{1ex}\\
&2M_F^2 L_{\phi_{\gamma_t}}^2\theta_t\Big(\frac{1 + L_{\Phi_{\gamma_t}}^2\eta_t^2}{L_{\Phi_{\gamma_t}}}\Big) + \alpha_{t+1}\beta_t^2  \leq \alpha_t 
\quad\text{and}\quad 
2M_{\phi_{\gamma_t}}^2\theta_t\Big(\frac{1 + L_{\Phi_{\gamma_t}}^2\eta_t^2}{L_{\Phi_{\gamma_t}}}\Big)  +  \hat{\alpha}_{t+1}\hat{\beta}_t^2 \leq \hat{\alpha}_t.
\end{array}\right.
\hspace{-6ex}
\end{equation}
Then, for all $t\geq 0$, one has
\begin{equation}\label{eq:Lyapunov_key}
\hspace{-1ex}
\begin{array}{lcl}
V_{\gamma_t}(x_{t+1}) &\leq& V_{\gamma_{t-1}}(x_t) - \frac{L_{\Phi_{\gamma_t}}\eta_t^2\theta_t}{4}\Exp{\norms{\Gc_{\eta_t}(x_t)}^2} + \frac{\kappa(1-\beta_t)^2\alpha_{t+1}\sigma_F^2}{b_2} + \frac{\hat{\kappa}(1-\hat{\beta}_t)^2\hat{\alpha}_{t+1}\sigma_J^2}{\hat{b}_2} \vspace{1ex}\\
&&+ {~} (\gamma_{t-1}-\gamma_t)B_{\psi}.
\end{array}
\hspace{-1ex}
\end{equation}
\end{lemma}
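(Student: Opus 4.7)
\textbf{Proof proposal for Lemma~\ref{le:descent_property}.}
The plan is to expand $V_{\gamma_t}(x_{t+1})$ by definition and bound each of its three pieces separately using the estimates already established, then collect terms and invoke the parameter conditions \eqref{eq:para_cond} to ensure the non-positivity of the unwanted contributions. Concretely, the three ingredients at my disposal are (i) the objective descent inequality \eqref{eq:lem4_psi_bound}, (ii) the one-step variance recursions of Lemma~\ref{lem:F_J_est_var} for $\tilde F_{t+1}$ and $\tilde J_{t+1}$, and (iii) the gradient-approximation bound \eqref{eq:vt_var0} from Lemma~\ref{lem:vt_var} which converts $\Exp{\norms{\nabla\Phi_{\gamma_t}(x_t) - v_t}^2}$ into a weighted sum of $\Exp{\norms{\tilde F_t - F(x_t)}^2}$ and $\Exp{\norms{\tilde J_t - F'(x_t)}^2}$.

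First, I substitute \eqref{eq:vt_var0} into \eqref{eq:lem4_psi_bound} to obtain a bound on $\Exp{\Psi_{\gamma_t}(x_{t+1})}$ in which the coefficient of $\Exp{\norms{\tilde F_t - F(x_t)}^2}$ is $2M_F^2 L_{\phi_{\gamma_t}}^2\theta_t(1+L_{\Phi_{\gamma_t}}^2\eta_t^2)/L_{\Phi_{\gamma_t}}$ and the coefficient of $\Exp{\norms{\tilde J_t - F'(x_t)}^2}$ is $2M_{\phi_{\gamma_t}}^2\theta_t(1+L_{\Phi_{\gamma_t}}^2\eta_t^2)/L_{\Phi_{\gamma_t}}$. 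Next, I apply Lemma~\ref{lem:F_J_est_var} to write
\begin{equation*}
\begin{array}{l}
\tfrac{\alpha_{t+1}}{2}\Exp{\norms{\tilde F_{t+1}\! -\! F(x_{t+1})}^2} \leq \tfrac{\alpha_{t+1}\beta_t^2}{2}\Exp{\norms{\tilde F_t - F(x_t)}^2} + \tfrac{\kappa\alpha_{t+1}(1-\beta_t)^2\sigma_F^2}{2 b_2}\vspace{0.5ex}\\
\hspace{20ex} + {~} \tfrac{\kappa\alpha_{t+1}\beta_t^2}{2 b_1}\Exps{\xi}{\norms{\Fb(x_{t+1},\xi)-\Fb(x_t,\xi)}^2} - \tfrac{\alpha_{t+1}\beta_t^2}{2}\Exp{\norms{F(x_{t+1}) - F(x_t)}^2},
\end{array}
\end{equation*}
and similarly for the Jacobian piece. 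The term $\Exps{\xi}{\norms{\Fb(x_{t+1},\xi) - \Fb(x_t,\xi)}^2}$ is bounded by $M_F^2\norms{x_{t+1}-x_t}^2$ via \eqref{eq:F_Lipschitz}, while the Jacobian analogue uses the $L_F$-average smoothness \eqref{eq:F_smooth} to give $L_F^2\norms{x_{t+1}-x_t}^2$. Since $x_{t+1} - x_t = \theta_t(\hat x_{t+1} - x_t)$, each such term contributes a multiple of $\theta_t^2\norms{\hat x_{t+1} - x_t}^2$; the $F(x_{t+1})-F(x_t)$ subtractions may be dropped (they are non-positive after being subtracted with a negative sign).

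Summing the three bounded pieces, the coefficient of $\Exp{\norms{\tilde F_t - F(x_t)}^2}$ is $\tfrac{1}{2}\bigl[2M_F^2 L_{\phi_{\gamma_t}}^2\theta_t(1+L_{\Phi_{\gamma_t}}^2\eta_t^2)/L_{\Phi_{\gamma_t}} + \alpha_{t+1}\beta_t^2\bigr]$, and by the second inequality in \eqref{eq:para_cond} this is bounded by $\alpha_t/2$, which is precisely what is needed so that this block is absorbed into $V_{\gamma_{t-1}}(x_t)$. The same argument handles the Jacobian block by the third inequality in \eqref{eq:para_cond}. The coefficient of $\Exp{\norms{\hat x_{t+1} - x_t}^2}$ gathers $-\tfrac{\theta_t}{2}(2/\eta_t - L_{\Phi_{\gamma_t}}\theta_t - 2L_{\Phi_{\gamma_t}})$ from \eqref{eq:lem4_psi_bound} together with $\tfrac{\kappa M_F^2\beta_t^2\theta_t^2\alpha_{t+1}}{2b_1}$ and $\tfrac{\hat\kappa L_F^2\hat\beta_t^2\theta_t^2\hat\alpha_{t+1}}{2\hat b_1}$; the first condition in \eqref{eq:para_cond} makes this total $\le 0$ (after factoring $\theta_t/2$), so it can simply be dropped. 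What survives is the gradient-mapping term $-\tfrac{L_{\Phi_{\gamma_t}}\eta_t^2\theta_t}{4}\Exp{\norms{\Gc_{\eta_t}(x_t)}^2}$, the two variance-injection terms $\kappa(1-\beta_t)^2\alpha_{t+1}\sigma_F^2/b_2$ and $\hat\kappa(1-\hat\beta_t)^2\hat\alpha_{t+1}\sigma_J^2/\hat b_2$, and the smoothing-parameter drift $(\gamma_{t-1}-\gamma_t)B_\psi$, giving exactly \eqref{eq:Lyapunov_key}.

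The main obstacle is purely bookkeeping: one must correctly pair the coefficient $\alpha_{t+1}\beta_t^2$ coming from the recursion of $\tilde F_{t+1}$ with the $2M_F^2L_{\phi_{\gamma_t}}^2\theta_t(1+L_{\Phi_{\gamma_t}}^2\eta_t^2)/L_{\Phi_{\gamma_t}}$ coefficient produced by Lemma~\ref{lem:vt_var}, and likewise track the additional $\theta_t^2$ factor entering the $\norms{\hat x_{t+1} - x_t}^2$ term through $x_{t+1}-x_t = \theta_t(\hat x_{t+1}-x_t)$, so that the three inequalities of \eqref{eq:para_cond} line up exactly with the three collected coefficients. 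No new analytical idea is required beyond these substitutions.
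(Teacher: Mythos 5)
Your proposal is correct and follows essentially the same route as the paper's proof: substitute the bound of Lemma~\ref{lem:vt_var} into \eqref{eq:lem4_psi_bound}, add the weighted one-step variance recursions of Lemma~\ref{lem:F_J_est_var} with $x_{t+1}-x_t = \theta_t(\hat{x}_{t+1}-x_t)$, and match the three collected coefficients to the three conditions in \eqref{eq:para_cond}. The bookkeeping you describe (including dropping the non-positive $-\beta_t^2\norms{F(x_{t+1})-F(x_t)}^2$ term) is exactly what the paper does.
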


\begin{proof}
First of all, by combining \eqref{eq:vt_var0} and \eqref{eq:lem4_psi_bound}, we obtain
\begin{equation}\label{eq:lmb2_proof1}
\arraycolsep=0.1em
\hspace{-1ex}\begin{array}{lcl}
\Exp{\Psi_{\gamma_t}(x_{t+1})} &\leq& \Exp{\Psi_{\gamma_{t-1}}(x_t)}  -  \frac{\theta_t}{2}\left(\frac{2}{\eta_t} - L_{\Phi_{\gamma_t}}\theta_t - 2L_{\Phi_{\gamma_t}} \right)\Exp{\norms{\hat{x}_{t+1} - x_t}^2}\vspace{1ex}\\
&& - {~} \frac{L_{\Phi_{\gamma_t}}\eta_t^2\theta_t}{4}\Exp{\norms{\Gc_{\eta_t}(x_t)}^2} + (\gamma_{t-1}-\gamma_t)B_{\psi} \vspace{1ex}\\
&& + {~} \theta_t\Big(\frac{1+L_{\Phi_{\gamma_t}}^2\eta_t^2}{L_{\Phi_{\gamma_t}}}\Big)\left(M_F^2L_{\phi_{\gamma_t}}^2 \Exp{\norms{\tilde{F}_t - F(x_t)}^2} +M_{\phi_{\gamma_t}}^2 \Exp{\norms{\tilde{J}_t - F'(x_t)}^2}\right).
\end{array}\hspace{-4ex}
\end{equation}
Due to the mini-batch estimators in \eqref{eq:est_update}, it is well-known that 
\begin{equation*}
\begin{array}{lclcl}
 \Exps{\Bc_t^2}{\norms{\Fb(x_t,\zeta_t) - F(x_t)}^2} &= & \Exp{\big\Vert\tfrac{1}{b_2}\sum_{\zeta_i\in\Bc_t^2}\Fb(x_t,\zeta_i) - F(x_t)\big\Vert^2} &\leq & \frac{\sigma_F^2}{b_2} \vspace{1ex}\\
 \Exps{\hat{\Bc}_t^2}{\norms{\Fb'(x_t, \hat{\zeta}_t) - F'(x_t)}^2} &= & \Exp{\big\Vert\frac{1}{\hat{b}_2}\sum_{\hat{\zeta}_i\in\hat{\Bc}^2}\Fb'(x_t,\hat{\zeta}_i) - F'(x_t)\big\Vert^2 }  &\leq& \frac{\sigma_J^2}{\hat{b}_2}.
\end{array}
\end{equation*}
Substituting these bounds and $x_{t+1} - x_t = \theta_t(\hat{x}_{t+1} - x_t)$ into \eqref{eq:var_est1} and taking full expectation the resulting inequality over $\Fc_{t+1}$, we obtain
\begin{equation*} 
\begin{array}{lcl}
\Exp{\norms{\tilde{F}_{t+1} - F(x_{t+1})}^2}  &\leq & \beta_t^2 \Exp{\norms{\tilde{F}_{t} - F(x_{t})}^2} + \frac{\kappa\beta_t^2\theta_t^2M_F^2}{b_1}\Exp{\norms{\hat{x}_{t+1}-x_t}^2} + \frac{\kappa(1-\beta_t)^2\sigma_F^2}{b_2} \vspace{1ex}\\
\Exp{\norms{\tilde{J}_{t+1}  - F'(x_{t+1})}^2}  &\leq & \hat{\beta_t}^2\Exp{\norms{\tilde{J}_{t} - F'(x_{t})}^2} + \frac{\hat{\kappa}\hat{\beta}_t^2\theta_t^2L_F^2}{\hat{b}_1} \Exp{\norms{\hat{x}_{t+1}-x_t}^2} +  \frac{\hat{\kappa}(1-\hat{\beta}_t)^2\sigma_J^2}{\hat{b}_2}.
\end{array}
\end{equation*}
Multiplying these inequalities by $\alpha_{t+1} > 0$ and $\hat{\alpha}_{t+1} > 0$, respectively, and adding the results to \eqref{eq:lmb2_proof1}, we can further derive
\begin{equation*}
\arraycolsep=0.2em
\begin{array}{lcl}
V_{\gamma_t}(x_{t+1}) &\overset{\tiny\eqref{eq:Lyapunov_func}}{:=} & \Exp{\Psi_{\gamma_t}(x_{t+1})} + \frac{\alpha_{t+1}}{2}\Exp{\norms{\tilde{F}_{t+1} - F(x_{t+1})}^2} + \frac{\hat{\alpha}_{t+1}}{2}\Exp{\norms{\tilde{J}_{t+1} - F'(x_{t+1})}^2} \vspace{1ex}\\
&\leq& \Exp{\Psi_{\gamma_{t-1}}(x_t)} + \left[ M_F^2L_{\phi_{\gamma_t}}^2\theta_t\Big(\frac{1 + L_{\Phi_{\gamma_t}}^2\eta_t^2}{L_{\Phi_{\gamma_t}}} \Big) + \frac{\alpha_{t+1}\beta_t^2}{2} \right] \Exp{\norms{\tilde{F}_t - F(x_t)}^2} \vspace{1ex}\\
&& + \left[M_{\phi_{\gamma_t}}^2 \theta_t \Big(\frac{1 + L_{\Phi_{\gamma_t}}^2\eta_t^2}{L_{\Phi_{\gamma_t}}} \Big) + \frac{\hat{\alpha}_{t+1}\hat{\beta}_t^2}{2}\right] \Exp{\norms{\tilde{J}_t - F'(x_t)}^2} - \frac{L_{\Phi_{\gamma_t}}\eta_t^2\theta_t}{4}\Exp{\norms{\Gc_{\eta_t}(x_t)}^2} \vspace{1ex}\\
&& - {~}  \frac{\theta_t}{2}\left( \frac{2}{\eta_t} - L_{\Phi_{\gamma_t}}\theta_t  - 2L_{\Phi_{\gamma_t}}  - \frac{\kappa M_F^2\beta_t^2\theta_t\alpha_{t+1}}{b_1} -  \frac{\hat{\kappa}L_F^2\hat{\beta}_t^2\theta_t\hat{\alpha}_{t+1}}{\hat{b}_1} \right)\Exp{\norms{\hat{x}_{t+1} - x_t}^2} \vspace{1ex}\\
&& + {~} \frac{\kappa(1-\beta_t)^2\alpha_{t+1}\sigma_F^2}{b_2} + \frac{\hat{\kappa}(1-\hat{\beta}_t)^2\hat{\alpha}_{t+1}\sigma_J^2}{\hat{b}_2} + (\gamma_{t-1}-\gamma_t)B_{\psi}.
\end{array}
\end{equation*}
Let us choose $\alpha_t > 0$ and $\hat{\alpha}_t > 0$ and impose three conditions as in \eqref{eq:para_cond}, i.e.:
\begin{equation*}
\left\{\begin{array}{ll}
&\frac{2}{\eta_t} \geq L_{\Phi_{\gamma_t}}\theta_t + 2L_{\Phi_{\gamma_t}} + \frac{\kappa M_F^2\beta_t^2\theta_t\alpha_{t+1}}{b_1} + \frac{\hat{\kappa}L_F^2\hat{\beta}_t^2\theta_t\hat{\alpha}_{t+1}}{\hat{b}_1},\vspace{1ex}\\
&2M_F^2L_{\phi_{\gamma_t}}^2\theta_t \Big( \frac{1 + L_{\Phi_{\gamma_t}}^2\eta_t^2}{L_{\Phi_{\gamma_t}}} \Big) + \alpha_{t+1}\beta_t^2  \leq \alpha_t, 
\quad\text{and}\quad
2M_{\phi_{\gamma_t}}^2\theta_t \Big(\frac{1 + L_{\Phi_{\gamma_t}}^2\eta_t^2}{L_{\Phi_{\gamma_t}}} \Big)  +  \hat{\alpha}_{t+1}\hat{\beta}_t^2 \leq \hat{\alpha}_t.
\end{array}\right.
\end{equation*}
Then, by using \eqref{eq:Lyapunov_func}, the last inequality can be further upper bounded as 
\begin{equation*}
\hspace{-1ex}
\arraycolsep=0.3em
\begin{array}{lcl}
V_{\gamma_t}(x_{t+1}) & \leq & V_{\gamma_{t-1}}(x_t) - \frac{L_{\Phi_{\gamma_t}}\eta_t^2\theta_t}{4}\Exp{\norms{\Gc_{\eta_t}(x_t)}^2} + \frac{\kappa(1-\beta_t)^2\alpha_{t+1}\sigma_F^2}{b_2} \vspace{1ex}\\
&& + {~} \frac{\hat{\kappa}(1-\hat{\beta}_t)^2\hat{\alpha}_{t+1}\sigma_J^2}{\hat{b}_2} + (\gamma_{t-1}-\gamma_t)B_{\psi},
\hspace{-1ex}
\end{array}
\end{equation*}
which proves \eqref{eq:Lyapunov_key}.
\end{proof}

\beforesubsec
\subsection{A general key bound for Algorithm~\ref{alg:A1}}
\aftersubsec
Now, we are ready to prove one key result, Theorem~\ref{thm:comp_D1},  for oracle complexity analysis  of Algorithm~\ref{alg:A1}.
To simplify our expressions, let us introduce the following notations in advance:
\begin{equation}\label{eq:D_new_quatities}
\left\{\begin{array}{lcl}
\omega_t &:= & \frac{\theta_t}{L_{\Phi_{\gamma_t}}} \quad\text{and}\quad \Sigma_T  :=  \sum_{t=0}^T\omega_t, \vspace{1ex}\\
\Theta_t & := & \frac{M_F^2L_{\phi_{\gamma_t}}^2\sqrt{26b_1\hat{b}_1}}{3\big(\kappa M_F^4L_{\phi_{\gamma_t}}^2\hat{b}_1 + \hat{\kappa} M_{\phi_{\gamma_t}}^2L_F^2b_1\big)^{1/2}}, \vspace{1ex}\\
\Pi_0 & := & \frac{\sqrt{26b_1\hat{b}_1}}{3\big(\hat{b}_1\kappa M_F^4L_{\phi_{\gamma_0}}^2 + b_1\hat{\kappa} L_F^2M_{\phi_{\gamma_0}}^2\big)^{1/2}}\left(\frac{\kappa M_F^2L_{\phi_{\gamma_0}}^2\sigma_F^2}{b_0} + \frac{\hat{\kappa} M_{\phi_{\gamma_0}}^2\sigma_J^2}{\hat{b}_0}\right), \vspace{1ex}\\
\Gamma_t &:= & \frac{\sqrt{26b_1\hat{b}_1}}{3\big(\hat{b}_1\kappa M_F^4L_{\phi_{\gamma_t}}^2 + b_1\hat{\kappa} L_F^2M_{\phi_{\gamma_t}}^2\big)^{1/2}}\left(\frac{\kappa M_F^2L_{\phi_{\gamma_t}}^2\sigma_F^2}{b_2} + \frac{\hat{\kappa} M_{\phi_{\gamma_t}}^2\sigma_J^2}{\hat{b}_2}\right).
\end{array}\right.
\end{equation}

\begin{theorem}\label{thm:comp_D1}
Suppose that Assumptions~\ref{ass:A1} and \ref{ass:A2} hold, and $\omega_t$, $\Sigma_T$, $\Theta_t$, $\Pi_0$, and $\Gamma_t$ are defined by \eqref{eq:D_new_quatities}.
Let $\set{x_t}_{t=0}^T$ be generated by Algorithm~\ref{alg:A1} using the following step-sizes:
\begin{equation}\label{eq:D_para_config}
\theta_t := \frac{3L_{\Phi_{\gamma_t}} \big[ b_1\hat{b}_1(1-\beta_t) \big]^{1/2}}{\sqrt{26}(\kappa M_F^4L_{\phi_{\gamma_t}}^2\hat{b}_1 + \hat{\kappa}M_{\phi_{\gamma_t}}^2L_F^2b_1)^{1/2}} \quad \text{and}\quad \eta_t  := \frac{2}{L_{\Phi_{\gamma_t}}(3 + \theta_t)},
\end{equation}
where $\beta_t, \hat{\beta}_t \in (0, 1]$ are chosen such that $\beta_t = \hat{\beta}_t$, $0 \leq \gamma_{t+1} \leq \gamma_t$, and
\begin{equation}\label{eq:D_beta_cond}
\arraycolsep=0.2em
\left\{ \begin{array}{ll}
& \frac{\beta_t^2(1-\beta_t)}{\Theta_t^2} \leq \frac{1-\beta_{t+1}}{\Theta_{t+1}^2} \leq \frac{1-\beta_t}{\Theta_t^2},  \vspace{1ex}\\
& \beta_t > \max\set{0, 1 - \tfrac{26}{9L_{\Phi_{\gamma_t}}^2}\Big(\tfrac{\kappa M_F^4L_{\phi_{\gamma_t}}^2}{b_1} + \tfrac{\hat{\kappa} L_F^2M_{\phi_{\gamma_t}}^2}{\hat{b}_1}\Big)}.
 \end{array}\right.
\end{equation}
Let $\bar{x}_T$ be randomly chosen between $\set{x_0,\cdots, x_T}$ such that $\Prob{\bar{x}_T = x_t} = \frac{\omega_t}{\Sigma_T}$, and $\bar{\eta}_T$ be corresponding to $\eta_t$ of $\bar{x}_T$.
Then, the following estimate holds:
\begin{equation}\label{eq:D_convergence1}
\displaystyle \Exp{\norms{\Gc_{\bar{\eta}_T}(\bar{x}_T)}^2} \leq  \displaystyle\frac{16}{\Sigma_T}\Big(\Exp{\Psi_0(x_0) - \Psi^{\star}_0}  + \gamma_T B_{\psi} \Big) +  \frac{8\Pi_0}{\Sigma_T\sqrt{1-\beta_0}} +  \displaystyle \frac{16}{\Sigma_T}\sum_{t=0}^T\frac{\Gamma_{t+1}(1-\beta_t)^2}{\sqrt{1 - \beta_{t+1}}}.  
\end{equation}
\end{theorem}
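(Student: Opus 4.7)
The plan is to telescope the one-step Lyapunov descent \eqref{eq:Lyapunov_key} from Lemma~\ref{le:descent_property}, once $\alpha_t$ and $\hat{\alpha}_t$ are pinned down so that the three inequalities in \eqref{eq:para_cond} are exactly enforced by the step-size schedule \eqref{eq:D_para_config}. The natural ansatz suggested by the structure of \eqref{eq:para_cond}(ii)--(iii) is to take $\alpha_t$ proportional to $\Theta_t^2/(1-\beta_{t-1})$ with a matching expression for $\hat{\alpha}_t$, chosen so that the cross term $\alpha_{t+1}\beta_t^2$ is dominated on a backward recursion. With this choice, the two-sided monotonicity assumption $\tfrac{\beta_t^2(1-\beta_t)}{\Theta_t^2} \leq \tfrac{1-\beta_{t+1}}{\Theta_{t+1}^2} \leq \tfrac{1-\beta_t}{\Theta_t^2}$ in \eqref{eq:D_beta_cond} is exactly what is needed to simultaneously upper- and lower-control the sequence $\{\alpha_t\}$ across $t$, and the bound $L_{\Phi_{\gamma_t}}\eta_t \leq 2/3$ (which follows from $\eta_t = 2/[L_{\Phi_{\gamma_t}}(3+\theta_t)]$ together with $\theta_t \geq 0$) gives $1+L_{\Phi_{\gamma_t}}^2\eta_t^2 \leq 13/9$, needed in (ii)--(iii).

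Next I would verify \eqref{eq:para_cond}(i). With $2/\eta_t = L_{\Phi_{\gamma_t}}(3+\theta_t)$, (i) reduces to $L_{\Phi_{\gamma_t}} \geq \tfrac{\kappa M_F^2 \beta_t^2\theta_t\alpha_{t+1}}{b_1} + \tfrac{\hat{\kappa} L_F^2 \hat{\beta}_t^2\theta_t\hat{\alpha}_{t+1}}{\hat{b}_1}$. Substituting the definition of $\theta_t$ from \eqref{eq:D_para_config} and the ansatz for $\alpha_{t+1},\hat{\alpha}_{t+1}$, this inequality collapses to a direct relation on $1-\beta_t$; the explicit lower bound on $\beta_t$ in the second line of \eqref{eq:D_beta_cond}, namely $\beta_t > 1 - \tfrac{26}{9L_{\Phi_{\gamma_t}}^2}\big(\tfrac{\kappa M_F^4 L_{\phi_{\gamma_t}}^2}{b_1} + \tfrac{\hat{\kappa}L_F^2 M_{\phi_{\gamma_t}}^2}{\hat{b}_1}\big)$, is precisely calibrated to close this estimate and in passing to guarantee $\theta_t \in (0,1]$, which is used throughout.

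Once \eqref{eq:para_cond} is verified, I would sum \eqref{eq:Lyapunov_key} from $t=0$ to $T$ and take full expectation. Lemma~\ref{le:properties_of_phi}(d) gives $V_{\gamma_T}(x_{T+1}) \geq \Psi_{\gamma_T}^{\star} \geq \Psi_0^{\star} - \gamma_T B_\psi$, and the telescoped $(\gamma_{t-1}-\gamma_t)B_\psi$ terms collapse under $\gamma_{t+1}\leq\gamma_t$ (with the convention $\gamma_{-1}=\gamma_0$) into at most $\gamma_T B_\psi$ on the right. For the initial Lyapunov value I would use $\Psi_{\gamma_0}(x_0)\leq\Psi_0(x_0)$ together with the standard mini-batch variance bounds $\mathbb{E}\|\tilde{F}_0 - F(x_0)\|^2 \leq \sigma_F^2/b_0$ and $\mathbb{E}\|\tilde{J}_0 - F'(x_0)\|^2 \leq \sigma_J^2/\hat{b}_0$; substituting the ansatz for $\alpha_0,\hat{\alpha}_0$ turns $\tfrac{\alpha_0\sigma_F^2}{2b_0} + \tfrac{\hat{\alpha}_0\sigma_J^2}{2\hat{b}_0}$ into exactly $\Pi_0/\sqrt{1-\beta_0}$ (using $\beta_0=\hat{\beta}_0$), and the per-iteration noise in \eqref{eq:Lyapunov_key} becomes $\Gamma_{t+1}(1-\beta_t)^2/\sqrt{1-\beta_{t+1}}$ by the same substitution. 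Finally, dividing by $\sum_{t=0}^T \tfrac{L_{\Phi_{\gamma_t}}\eta_t^2\theta_t}{4} = \sum_t \tfrac{\omega_t}{(3+\theta_t)^2}$, which is bounded below by $\Sigma_T/16$ since $\theta_t\leq 1$, produces the prefactor $16$, and the definition $\mathrm{Prob}(\bar{x}_T = x_t)=\omega_t/\Sigma_T$ rewrites the left-hand side as $\mathbb{E}\|\mathcal{G}_{\bar{\eta}_T}(\bar{x}_T)\|^2$. The main obstacle, and the step I would spend the most care on, is the bookkeeping of the ansatz $\alpha_t \propto \Theta_t^2/(1-\beta_{t-1})$: I must match powers of $1-\beta_t$ on both sides of the monotonicity \eqref{eq:D_beta_cond} so that the $\Pi_0$ and $\Gamma_{t+1}$ coefficients emerge with the sharp $\sqrt{1-\beta_0}$ and $\sqrt{1-\beta_{t+1}}$ normalizations displayed in \eqref{eq:D_convergence1}, rather than with suboptimal powers that would break the eventual $\mathcal{O}(T^{-2/3})$ rate.
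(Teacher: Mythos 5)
Your proposal tracks the paper's proof essentially step for step: enforce the three conditions of \eqref{eq:para_cond} by choosing $\alpha_t,\hat{\alpha}_t$; use $\eta_t=2/[L_{\Phi_{\gamma_t}}(3+\theta_t)]$ to get $1+L_{\Phi_{\gamma_t}}^2\eta_t^2\le 13/9$ and $L_{\Phi_{\gamma_t}}\eta_t^2\theta_t/4\ge\omega_t/16$; telescope \eqref{eq:Lyapunov_key}; bound $V_{\gamma_0}(x_0)\le\Psi_0(x_0)+\Pi_0/(2\sqrt{1-\beta_0})$ and $V_{\gamma_T}(x_{T+1})\ge\Psi_0^{\star}-\gamma_TB_{\psi}$ via Lemma~\ref{le:properties_of_phi}; and divide by $\Sigma_T/16$. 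The one concrete slip is the ansatz: balancing conditions (i) and (ii) of \eqref{eq:para_cond} forces $\alpha_t:=\Theta_t/\sqrt{1-\beta_t}$ together with the coupling $\hat{\alpha}_t:=\tfrac{M_{\phi_{\gamma_t}}^2}{M_F^2L_{\phi_{\gamma_t}}^2}\alpha_t$ --- first power of $\Theta_t$, inverse \emph{square root} of $1-\beta_t$, same index $t$ --- not $\Theta_t^2/(1-\beta_{t-1})$. That specific choice is what turns the admissibility requirement $\alpha_t\le\alpha_{t+1}\le\alpha_t/\beta_t$ into the first line of \eqref{eq:D_beta_cond}, makes the initial-variance contribution equal $\Pi_0/(2\sqrt{1-\beta_0})$ and the per-step noise equal $\Gamma_{t+1}(1-\beta_t)^2/\sqrt{1-\beta_{t+1}}$, and, after maximizing $\theta_t$ subject to the resulting bounds, yields exactly \eqref{eq:D_para_config}; the hypothesis $\gamma_{t+1}\le\gamma_t$ enters precisely to guarantee $M_{\phi_{\gamma_{t+1}}}/L_{\phi_{\gamma_{t+1}}}\le M_{\phi_{\gamma_t}}/L_{\phi_{\gamma_t}}$ so that the coupled $\hat{\alpha}_t$ inherits the recursion from $\alpha_t$. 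Since you explicitly flagged this bookkeeping as the step requiring care, the discrepancy is a fixable calibration rather than a structural gap.
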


\begin{proof}[\textbf{The proof of Theorem~\ref{thm:comp_D1}}]
First, the conditions in \eqref{eq:para_cond} can be simplified as follows:
\begin{equation}\label{eq:D_para_cond1}
\hspace{-1ex}\left\{\hspace{-3ex}\begin{array}{llcl}
&L_{\Phi_{\gamma_t}}\theta_t + 2L_{\Phi_{\gamma_t}} + \big(\frac{\kappa M_F^2\beta_t^2\alpha_{t+1}}{b_1} + \frac{\hat{\kappa}L_F^2\hat{\beta_t}^2\hat{\alpha}_{t+1}}{\hat{b}_1}\big)\theta_t & \leq & \frac{2}{\eta_t}, \vspace{1ex}\\
&2M_F^2L_{\phi_{\gamma_t}}^2(1 + L_{\Phi_{\gamma_t}}^2\eta_t^2)\theta_t & \leq & L_{\Phi_{\gamma_t}}(\alpha_t -\beta_t^2\alpha_{t+1}), \vspace{1ex}\\
&2M_{\phi_{\gamma_t}}^2(1+L_{\Phi_{\gamma_t}}^2\eta_t^2)\theta_t  &\leq & L_{\Phi_{\gamma_t}}(\hat{\alpha}_t -\hat{\beta}_t^2\hat{\alpha}_{t+1}).
\end{array}\right.\hspace{-6ex}
\end{equation}
Let us update $\eta_t := \frac{2}{(3+\theta_t)L_{\Phi_{\gamma_t}}}$ as \eqref{eq:D_para_config}.
Since $\theta_t \in (0, 1]$, we have 
\begin{equation*}
\frac{1}{2L_{\Phi_{\gamma_t}}} \leq \eta_t < \frac{2}{3L_{\Phi_{\gamma_t}}} \quad \text{and}\quad 1 \leq  1 + L_{\Phi_{\gamma_t}}^2\eta_t^2 < \frac{13}{9}.
\end{equation*}
Next, let us choose $\gamma_t$, $\beta_t$, $\hat{\beta}_t$, $\alpha_t$, and $\hat{\alpha}_t$ such that
\begin{equation}\label{eq:alpha_cond}
\hat{\beta}_t = \beta_t \in (0, 1], \quad \hat{\alpha}_t = \frac{M_{\phi_{\gamma_t}}^2}{M_F^2L_{\phi_{\gamma_t}}^2}\alpha_t, \quad \frac{M_{\phi_{\gamma_{t+1}}}}{L_{\phi_{\gamma_{t+1}}}} \leq \frac{M_{\phi_{\gamma_t}}}{L_{\phi_{\gamma_t}}}, \quad\text{and} \quad 0 < \alpha_t \leq \alpha_{t+1} \leq \frac{\alpha_t}{\beta_t}.
\end{equation}
Then, we have  
\begin{equation*}
\begin{array}{llcl}
& \alpha_t - \alpha_{t+1}\beta_t^2 & \geq & \alpha_t(1-\beta_t) > 0, \vspace{1ex}\\
\quad\text{and}\quad & \hat{\alpha}_t - \hat{\beta}_t^2\hat{\alpha}_{t+1} & = &  \frac{M_{\phi_{\gamma_t}}^2}{M_F^2L_{\phi_{\gamma_t}}^2}\alpha_t - \beta_t^2\frac{M_{\phi_{\gamma_{t+1}}}^2}{M_F^2L_{\phi_{\gamma_{t+1}}}^2}\alpha_{t+1} \geq  \frac{M_{\phi_{\gamma_t}}^2}{M_F^2L_{\phi_{\gamma_t}}^2}(\alpha_t - \beta_t^2\alpha_{t+1}) \vspace{1ex}\\
&& \geq &  \frac{M_{\phi_{\gamma_t}}^2}{M_F^2L_{\phi_{\gamma_t}}^2}(1-\beta_t)\alpha_t = (1-\beta_t)\hat{\alpha}_t > 0.
\end{array}
\end{equation*}
By using the last two inequalities, we can show that the conditions in \eqref{eq:D_para_cond1} hold, if we have
\begin{equation}\label{eq:D_para_cond2}
\begin{array}{ll}
& 0 < \theta_t \leq \frac{9L_{\Phi_{\gamma_t}}\alpha_t(1 - \beta_t)}{26M_F^2L_{\phi_{\gamma_t}}^2}, \quad\quad 0 < \theta_t \leq \frac{9L_{\Phi_{\gamma_t}}\hat{\alpha}_t(1-\beta_t)}{26M_{\phi_{\gamma_t}}^2}, \vspace{1ex}\\
\text{and}\quad & 0  < \theta_t \leq L_{\Phi_{\gamma_t}}\left(\frac{\kappa M_F^2\alpha_{t}}{b_1} + \frac{\hat{\kappa}L_F^2\hat{\alpha}_{t}}{\hat{b}_1}\right)^{-1}.
\end{array}
\end{equation}
Therefore, the three conditions in \eqref{eq:D_para_cond2} hold if we choose
\begin{equation*}
\frac{\alpha_t(1 - \beta_t)}{M_F^2L_{\phi_{\gamma_t}}^2} = \frac{\hat{\alpha}_t(1 - \beta_t)}{M_{\phi_{\gamma_t}}^2}\quad \text{and}\quad 
\left(\frac{\kappa M_F^2}{b_1} + \frac{\hat{\kappa}L_F^2M_{\phi_{\gamma_t}}^2}{M_F^2L_{\phi_{\gamma_t}}^2\hat{b}_1}\right)\alpha_t   = \frac{26M_F^2L_{\phi_{\gamma_t}}^2}{9\alpha_t(1 -\beta_t)}.
\end{equation*}
These conditions show that we can choose
\begin{equation*}
\begin{array}{lcl}
\alpha_t :=  \frac{\Theta_t}{\sqrt{1-\beta_t}}\quad \text{and}\quad \hat{\alpha}_t  :=  \frac{M_{\phi_{\gamma_t}}^2\Theta_t}{M_F^2L_{\phi_{\gamma_t}}^2\sqrt{1-\beta_t}},
\quad\text{where}\quad \Theta_t := \frac{M_F^2L_{\phi_{\gamma_t}}^2\sqrt{26b_1\hat{b}_1}}{3\big(\kappa M_F^4L_{\phi_{\gamma_t}}^2\hat{b}_1 + \hat{\kappa}M_{\phi_{\gamma_t}}^2L_F^2b_1\big)^{1/2}}.
\end{array} 
\end{equation*}
Clearly, this $\Theta_t$ is exactly given by \eqref{eq:D_new_quatities}.
With this choice of $\alpha_t$ and $\hat{\alpha}_t$, we obtain
\begin{equation*} 
0 < \theta_t \leq \bar{\theta}_t := \frac{9L_{\Phi_{\gamma_t}}\Theta_t\sqrt{(1-\beta_t)}}{26M_F^2L_{\phi_{\gamma_t}}^2} = \frac{3L_{\Phi_{\gamma_t}}\sqrt{b_1\hat{b}_1(1-\beta_t)}}{\sqrt{26}(\kappa M_F^4L_{\phi_{\gamma_t}}^2\hat{b}_1 + \hat{\kappa} M_{\phi_{\gamma_t}}^2L_F^2b_1)^{1/2}}.
\end{equation*}
We then choose $\theta_t := \bar{\theta}_t$ at the upper bound as in \eqref{eq:D_para_config}.

Now, to guarantee that $0 < \bar{\theta}_t \leq 1$, we impose the following condition as in \eqref{eq:D_beta_cond}, i.e.:
\begin{equation*}
\beta_t > \max\set{0, 1 - \tfrac{26}{9L_{\Phi_{\gamma_t}}^2}\left(\tfrac{\kappa M_F^4L_{\phi_{\gamma_t}}^2}{b_1} + \tfrac{\hat{\kappa} L_F^2M_{\phi_{\gamma_t}}^2}{\hat{b}_1}\right)}.
\end{equation*}
Due to the choice of $\alpha_t$, the condition $\alpha_t \leq \alpha_{t+1} \leq \frac{\alpha_t}{\beta_t}$ in \eqref{eq:alpha_cond}  is equivalent to
\begin{equation*}
\frac{\beta_t^2(1-\beta_t)}{\Theta_t^2} \leq \frac{1-\beta_{t+1}}{\Theta_{t+1}^2} \leq \frac{1-\beta_t}{\Theta_t^2},
\end{equation*}
which is the first condition of \eqref{eq:D_beta_cond}.
Moreover, since $M_{\phi_{\gamma_t}} = M_{\psi}\norms{K}$ and $L_{\phi_{\gamma_t}} = \frac{\norms{K}^2}{\mu_{\psi} + \gamma_t}$ due to Lemma~\ref{le:properties_of_phi}, the third condition of \eqref{eq:alpha_cond} reduces to $\gamma_{t+1} \leq \gamma_t$, which is one of the conditions in Theorem~\ref{thm:comp_D1}.

Next, under the choice of $\alpha_t$ and $\hat{\alpha}_t$, and $\eta_t \geq \frac{1}{2L_{\Phi_{\gamma_t}}}$, \eqref{eq:Lyapunov_key} implies
\begin{equation}\label{eq:D_proof26}
\hspace{-2ex}\begin{array}{ll}
&\frac{\theta_t}{16L_{\Phi_{\gamma_t}}}\Exp{\norms{\Gc_{\eta_t}(x_t)}^2} \leq  V_{\gamma_{t-1}}(x_t) - V_{\gamma_t}(x_{t+1})  + (\gamma_{t-1} - \gamma_t)B_{\psi} \vspace{1ex}\\
&+ {~} \frac{\sqrt{26b_1\hat{b}_1}}{3\big(\hat{b}_1\kappa M_F^4L_{\phi_{\gamma_{t+1}}}^2 + b_1 \hat{\kappa}L_F^2M_{\phi_{\gamma_{t+1}}}^2\big)^{1/2}}\left(\frac{\kappa M_F^2L_{\phi_{\gamma_{t+1}}}^2\sigma_F^2}{b_2} + \frac{\hat{\kappa} M_{\phi_{\gamma_{t+1}}}^2\sigma_J^2}{\hat{b}_2}\right)\frac{(1-\beta_t)^2}{(1 - \beta_{t+1})^{1/2}}.
\end{array}\hspace{-6ex}
\end{equation}
Note that since $\Psi_{\gamma_0}(x_0) \leq \Psi_0(x_0)$ due to Lemma~\ref{le:properties_of_phi}, and $\gamma_{-1} = \gamma_0$ by convention, we have
\begin{equation}\label{eq:D_proof27}
\arraycolsep=0.2em
\hspace{-1ex}\begin{array}{lcl}
V_{\gamma_0}(x_0) &= & \Exp{\Psi_{\gamma_0}(x_0)} + \frac{\alpha_0}{2}\Exp{\norms{\tilde{F}_0 - F(x_0)}^2} + \frac{\hat{\alpha}_0}{2}\Exp{\norms{\tilde{J}_0 - F'(x_0)}^2} \vspace{1ex}\\
&\leq&  \Exp{\Psi_{0}(x_0)} + \frac{\sqrt{26b_1\hat{b}_1}}{6\big(\hat{b}_1 \kappa M_F^4 L_{\phi_{\gamma_0}}^2 + b_1\hat{\kappa} L_F^2M_{\phi_{\gamma_0}}^2\big)^{1/2}} \left(\frac{\kappa M_F^2L_{\phi_{\gamma_0}}^2\sigma_F^2}{b_0} + \frac{\hat{\kappa} M_{\phi_{\gamma_0}}^2\sigma_J^2}{\hat{b}_0}\right)\frac{1}{(1-\beta_0)^{1/2}}.
\end{array}\hspace{-4ex}
\end{equation}
Moreover, by Lemma~\ref{le:properties_of_phi}(d), we have
\begin{equation}\label{eq:D_V_to_Psi}
V_{\gamma_T}(x_{T+1}) \geq   \Exp{\Psi_{\gamma_T}(x_{T+1})} \geq \Exp{\Psi_{0}(x_{T+1})} - \gamma_T B_{\psi} \geq \Psi^{\star}_0  - \gamma_T B_{\psi}.
\end{equation}
Let us define $\Gamma_t$ and $\Pi_0$ as \eqref{eq:D_new_quatities}, i.e.:
\begin{equation*} 
\left\{\begin{array}{lcl}
\Gamma_t &:= & \frac{\sqrt{26b_1\hat{b}_1}}{3\big(\hat{b}_1\kappa M_F^4L_{\phi_{\gamma_t}}^2 + b_1\hat{\kappa} L_F^2M_{\phi_{\gamma_t}}^2\big)^{1/2}}\left(\frac{\kappa M_F^2L_{\phi_{\gamma_t}}^2\sigma_F^2}{b_2} + \frac{\hat{\kappa} M_{\phi_{\gamma_t}}^2\sigma_J^2}{\hat{b}_2}\right), \vspace{1ex}\\
\Pi_0 & := & \frac{\sqrt{26b_1\hat{b}_1}}{3\big(\hat{b}_1\kappa M_F^4L_{\phi_{\gamma_0}}^2 + b_1 \hat{\kappa} L_F^2M_{\phi_{\gamma_0}}^2\big)^{1/2}}\left(\frac{\kappa M_F^2L_{\phi_{\gamma_0}}^2\sigma_F^2}{b_0} + \frac{\hat{\kappa} M_{\phi_{\gamma_0}}^2\sigma_J^2}{\hat{b}_0}\right).
\end{array}\right.
\end{equation*}
Then, summing up \eqref{eq:D_proof26} from $t:=0$ to $t := T$, and using these expressions, \eqref{eq:D_proof27}, and \eqref{eq:D_V_to_Psi}, we get
\begin{equation*}
\sum_{t=0}^T \frac{\theta_t}{16L_{\Phi_{\gamma_t}}}\Exp{\norms{\Gc_{\eta_t}(x_t)}^2} \leq \Exp{\Psi_{0}(x_0) - \Psi^{\star}_0}   + \gamma_T B_{\psi}  + \sum_{t=0}^T\frac{\Gamma_{t+1}(1-\beta_t)^2}{(1 - \beta_{t+1})^{1/2}} +  \frac{\Pi_0}{2(1-\beta_0)^{1/2}}.
\end{equation*}
Dividing this inequality by $\frac{\Sigma_T}{16}$, where $\Sigma_T := \sum_{t=0}^T\omega_t \equiv \sum_{t=0}^T\frac{\theta_t}{L_{\Phi_{\gamma_t}}}$, we obtain
\begin{equation*}
\arraycolsep=0.2em
\hspace{-1ex}\begin{array}{lcl}
\displaystyle\frac{1}{\Sigma_T}\sum_{t=0}^T\omega_t\Exp{\norms{\Gc_{\eta_t}(x_t)}^2} & \leq & \displaystyle\frac{16}{\Sigma_T}\Big(\Exp{\Psi_0(x_0) - \Psi^{\star}_0}  + \gamma_T B_{\psi} \Big) +  \frac{8\Pi_0}{\Sigma_T(1-\beta_0)^{1/2}} \vspace{1ex}\\
&& + {~} \displaystyle \frac{16}{\Sigma_T}\sum_{t=0}^T\frac{\Gamma_{t+1}(1-\beta_t)^2}{(1 - \beta_{t+1})^{1/2}}.  
\end{array}\hspace{-4ex}
\end{equation*}
Finally, due to the choice of $\bar{x}_T$ and $\bar{\eta}_T$, we have $\frac{1}{\Sigma_T}\sum_{t=0}^T\omega_t\Exp{\norms{\Gc_{\eta_t}(x_t)}^2} = \Exp{\norms{\Gc_{\bar{\eta}_T}(\bar{x}_T)}^2}$.
This relation together with the above estimate prove \eqref{eq:D_convergence1}.
\end{proof}

\beforesubsec
\subsection{The proof of Theorem~\ref{th:convergence2_scvx}: The smooth case with constant step-size}\label{apdx:subsec:th:convergence2_scvx}
\aftersubsec
Now, we prove our first main result in the main text.

\begin{proof}[\textbf{The proof of Theorem~\ref{th:convergence2_scvx} in the main text}]
First, since $\mu_{\psi} = 1 > 0$, we can set $\gamma_t := 0$ for all $t\geq 0$.
That means, we do not need to smooth $\phi_0$ in \eqref{eq:com_nlp}.
Hence, from \eqref{eq:D_new_quatities}, $\Theta_t = \Theta_0 =  \frac{M_F^2L_{\phi_0}\sqrt{26 b_1\hat{b}_1}}{3\big(\kappa M_F^4L_{\phi_0}^2\hat{b}_1 {~} + {~}  \hat{\kappa} M_{\phi_0}^2L_F^2b_1\big)^{1/2}}$ and $\frac{\omega_t}{\Sigma_T} = \frac{\theta_t}{\sum_{t=0}^T\theta_t}$, where $L_{\Phi_0}$ is defined by \eqref{eq:constant_defs}.

Next, given a batch size $b > 0$, let us choose the mini-batch sizes  $b_0 := c_0\hat{b}_0 > 0$, $\hat{b}_1 = \hat{b}_2 := b > 0$, and $b_1 = b_2 := c_0b$ for some $c_0 > 0$.
We also choose a constant step-size $\theta_t := \theta \in (0, 1]$ and a constant weight $\beta_t := \beta \in (0, 1]$ for all $t \geq 0$.
We also recall $P$, $Q$, and $L_{\Phi_0}$ defined by \eqref{eq:constant_defs}.

With this configuration, the first condition of \eqref{eq:D_beta_cond} and $0 \leq \gamma_{t+1} \leq \gamma_t$ are automatically satisfied, while the second one becomes
\begin{equation}\label{eq:D_cond_beta}
\beta > \max\set{0, 1 - \tfrac{26}{9c_0L_{\Phi_0}^2b}\big(\kappa M_F^4\norms{K}^4 {~} + {~}  c_0\hat{\kappa} \norms{K}^2L_F^2M_{\psi}^2 \big)} = \max\set{0, 1 - \tfrac{P^2}{L_{\Phi_0}^2 b} }.
\end{equation}
Moreover, we also obtain from \eqref{eq:D_new_quatities}, \eqref{eq:D_para_config}, and \eqref{eq:constant_defs} that
\begin{equation*}
\left\{\begin{array}{lclcl}
\theta_t &= & \theta = \frac{3L_{\Phi_0}\sqrt{c_0b(1-\beta)}}{\sqrt{26}(\kappa M_F^4\norms{K}^4  {~} + {~} c_0\hat{\kappa}\norms{K}^2M_{\psi}^2L_F^2)^{1/2}}  
& \overset{\tiny \eqref{eq:constant_defs}}{=} & \frac{L_{\Phi_0}[b(1-\beta)]^{1/2}}{P}, \vspace{1ex}\\
\Gamma_t &= & \Gamma = \frac{\sqrt{26}\left( \kappa M_F^2\norms{K}^4\sigma_F^2 {~} + {~} c_0\hat{\kappa}\norms{K}^2M_{\psi}^2\sigma_J^2\right)}{3\sqrt{c_0b}\big(\kappa M_F^4\norms{K}^4 {~} + {~} c_0\hat{\kappa}\norms{K}^2L_F^2M_{\psi}^2\big)^{1/2}} 
& \overset{\tiny \eqref{eq:constant_defs}}{=}  & \frac{Q}{P\sqrt{b}},  \vspace{1ex}\\
\Pi_0 & = & \frac{\sqrt{26b}\left(\kappa M_F^2\norms{K}^4\sigma_F^2 {~} + {~} c_0\hat{\kappa}\norms{K}^2M_{\psi}^2\sigma_J^2\right)}{3\sqrt{c_0} \hat{b}_0\big(\kappa M_F^4\norms{K}^4 {~} + {~} c_0\hat{\kappa}\norms{K}^2L_F^2M_{\psi}^2\big)^{1/2}} 
& \overset{\tiny \eqref{eq:constant_defs}}{=} & \frac{Q\sqrt{b}}{P\hat{b}_0}, \vspace{1ex}\\
\Sigma_T &= & \sum_{t=0}^T\frac{\theta}{L_{\Phi_0}} = \frac{\theta(T+1)}{L_{\Phi_0}} & = & \frac{(T+1)[b(1-\beta)]^{1/2}}{P}. 
\end{array}\right.
\end{equation*}
Furthermore, with these expressions of $\Gamma_t$, $\Pi_0$, and $\Sigma_T$, \eqref{eq:D_convergence1} reduces to
\begin{equation*} 
\hspace{-1ex}\begin{array}{lcl}
\Exp{\norms{\Gc_{\eta}(\bar{x}_T)}^2}  &\leq & \frac{16P}{(T+1)[b(1-\beta)]^{1/2}}\Exp{\Psi_0(x_0) - \Psi_0^{\star}}  +  \frac{8Q}{\hat{b}_0(T+1)(1-\beta)} + \frac{16Q(1-\beta)}{b}.
\end{array}\hspace{-6ex}
\end{equation*}
Trading-off the term $\frac{1}{\hat{b}_0(1-\beta)(T+1)} + \frac{2(1-\beta)}{b}$ over $\beta \in (0, 1]$, we obtain $\beta := 1 - \frac{b^{1/2}}{[\hat{b}_0(T+1)]^{1/2}}$, which has shown in \eqref{eq:para_config0}.
In this case, $\theta_t = \theta = \frac{L_{\Phi_0}[b(1-\beta)]^{1/2}}{P} = \frac{ L_{\Phi_0} b^{3/4} }{P[\hat{b}_0(T+1)]^{1/4}}$ as shown in \eqref{eq:para_config0}.

Now, let us choose $\hat{b}_0 := c_1^2[b(T+1)]^{1/3}$ for some $c_1 > 0$.
Then, the last inequality leads to
\begin{equation*} 
\hspace{-1ex}\begin{array}{ll}
\Exp{\norms{\Gc_{\eta}(\bar{x}_T)}^2} {\!\!\!\!}&\leq \frac{16P\sqrt{c_1}}{ [b(T+1)]^{2/3}}\big[\Psi_0(x_0) - \Psi_0^{\star}\big] +  \frac{24Q}{2c_1[b(T+1)]^{2/3}}.
\end{array}\hspace{-6ex}
\end{equation*}
Hence, if we define $\Delta_0$ as in \eqref{eq:convergence_rate1_b}, i.e.: 
\begin{equation*}
\Delta_0 := 16P\sqrt{c_1} \big[\Psi_0(x_0)  - \Psi_0^{\star}\big]  +  \frac{24Q}{c_1},
\end{equation*}
then we obtain from the last inequality that \eqref{eq:convergence_rate1_b} holds, i.e.:
\begin{equation*}
\Exp{\norms{\Gc_{\eta}(\bar{x}_T)}^2} \leq \frac{ \Delta_0}{[b(T+1)]^{2/3}}.
\end{equation*}
Consequently, for a given tolerance $\varepsilon > 0$, to obtain $\Exp{\norms{\Gc_{\eta}(\bar{x}_T)}^2}  \leq \varepsilon^2$, we need at most $T := \big\lfloor \frac{\Delta_0^{3/2}}{b\varepsilon^3}\big\rfloor$ iterations.
In this case, the total number of function evaluations $\Fb(x_t,\xi)$ is at most
\begin{equation*}
\Tc_F := b_0 + (T+1)(2b_1 + b_2) = c_0c_1^2[b(T+1)]^{1/3} + 3c_0(T+1)b = \frac{c_0c_1^2\Delta_0^{1/2}}{\varepsilon} + \frac{3c_0\Delta_0^{3/2}}{\varepsilon^3}.
\end{equation*}
Alternatively, the total number of Jacobian evaluations $\Fb'(x_t,\xi)$ is at most
\begin{equation*}
\Tc_J := \hat{b}_0 + (T+1)(2\hat{b}_1 + \hat{b}_2) = c_1^2[b(T+1)]^{1/3} + 3(T+1)b = \frac{c_1^2\Delta_0^{1/2}}{\varepsilon} + \frac{3\Delta_0^{3/2}}{\varepsilon^3}.
\end{equation*}
Finally, since $\beta := 1 - \frac{b^{1/2}}{[\hat{b}_0(T+1)]^{1/2}}$, the condition \eqref{eq:D_cond_beta} leads to $\frac{b^{1/2}}{[\hat{b}_0(T+1)]^{1/2}} < \frac{P^2}{L_{\Phi_0}^2 b}$, which is equivalent to $\frac{\hat{b}_0(T+1)}{b^3} > \frac{L_{\Phi_0}^4}{P^4}$ as shown in Theorem~\ref{th:convergence2_scvx}. 
\end{proof}

\beforesubsec
\subsection{The proof of Theorem~\ref{th:convergence2_scvx_diminishing}: The smooth case with diminishing step-size}\label{apdx:subsec:th:convergence2_scvx_diminishing}
\aftersubsec

\begin{proof}[\textbf{The proof of Theorem~\ref{th:convergence2_scvx_diminishing} in the main text}]
Similar to the proof of Theorem~\ref{th:convergence2_scvx}, with $\mu_{\psi} = 1 > 0$, we set $\gamma_t = 0$.
Hence, we obtain $\Theta_t = \Theta_0 =  \frac{M_F^2L_{\phi_0}\sqrt{26b_1\hat{b}_1}}{3\big(\kappa M_F^4L_{\phi_0}^2\hat{b}_1 + \hat{\kappa} M_{\phi_0}^2L_F^2b_1\big)^{1/2}}$ and $\frac{\omega_t}{\Sigma_T} = \frac{\theta_t}{\sum_{t=0}^T\theta_t}$.

Next, given a mini-batch size $b > 0$, let us choose the mini-batch sizes $b_0 := c_0\hat{b}_0$, $\hat{b}_1 = \hat{b}_2 := b$, and $b_1 = b_2 := c_0b > 0$ for some $c_0 > 0$.
With these choices, the condition \eqref{eq:D_beta_cond} becomes
\begin{equation}\label{eq:D_cond_beta2}
\beta_t^2(1-\beta_t) \leq 1-\beta_{t+1} \leq 1 - \beta_t  \text{ and } 
\beta_t > \max\set{0, 1 - \tfrac{26}{9c_0L_{\Phi_0}^2b}\big(c_0\kappa M_F^4L_{\phi_0}^2 +  \hat{\kappa} L_F^2M_{\phi_0}^2\big)}.
\end{equation}
Moreover, from \eqref{eq:D_new_quatities} and \eqref{eq:D_para_config}, we have
\begin{equation*}
\left\{\begin{array}{lclcl}
\theta_t &= &  \frac{3L_{\Phi_0}\sqrt{c_0b(1-\beta_t)}}{\sqrt{26}(\kappa M_F^4\norms{K}^4 + c_0\hat{\kappa}\norms{K}^2M_{\psi}^2L_F^2)^{1/2}} & \overset{\tiny\eqref{eq:constant_defs}}{=} & \frac{L_{\Phi_0}[b(1-\beta_t)]^{1/2}}{P}, \vspace{1ex}\\
\Gamma_t &= & \Gamma = \frac{\sqrt{26}\left( \kappa M_F^2\norms{K}^4\sigma_F^2 {~} + {~} c_0\hat{\kappa}\norms{K}^2M_{\psi}^2\sigma_J^2\right)}{3\sqrt{c_0b}\big(\kappa M_F^4\norms{K}^4 {~} + {~} c_0\hat{\kappa}\norms{K}^2L_F^2M_{\psi}^2\big)^{1/2}} & \overset{\tiny\eqref{eq:constant_defs}}{=} & \frac{Q}{P\sqrt{b}}, \vspace{1ex}\\
\Pi_0 & = & \frac{\sqrt{26b}\left( \kappa M_F^2\norms{K}^4\sigma_F^2 {~} + {~} c_0\hat{\kappa}\norms{K}^2M_{\psi}^2\sigma_J^2\right)}{3\sqrt{c_0}\hat{b}_0\big( \kappa M_F^4\norms{K}^4 {~} + {~} c_0\hat{\kappa}\norms{K}^2L_F^2M_{\psi}^2\big)^{1/2}} & \overset{\tiny\eqref{eq:constant_defs}}{=} & \frac{Q\sqrt{b}}{P\hat{b}_0} , \vspace{1ex}\\
\Sigma_T &= & \sum_{t=0}^T\omega_t = \sum_{t=0}^T\frac{\theta_t}{L_{\Phi_0}} &= & \frac{\sqrt{b}}{P}\sum_{t=0}^T\sqrt{1-\beta_t}.
\end{array}\right.
\end{equation*}
Furthermore, with these expressions of $\Gamma_t$, $\Pi_0$, and $\Sigma_T$, \eqref{eq:D_convergence1} reduces to
\begin{equation}\label{eq:proof100_a} 
\arraycolsep=0.2em
\hspace{-1ex}\begin{array}{lcl}
\frac{1}{\sum_{t=0}^T\theta_t}\sum_{t=0}^T\theta_t\Exp{\norms{\Gc_{\eta_t}(x_t)}^2} 
& \leq & \frac{16P}{\sqrt{b}\sum_{t=0}^T\sqrt{1-\beta_t}} \big[ \Psi_0(x_0) - \Psi^{\star}_0 \big]  + \frac{8Q}{\hat{b}_0\sqrt{1-\beta_0}\sum_{t=0}^T\sqrt{1-\beta_t}} \vspace{1ex}\\
&& + {~} \frac{16Q}{b\sum_{t=0}^T\sqrt{1-\beta_t}}\sum_{t=0}^T\frac{(1-\beta_t)^2}{(1-\beta_{t+1})^{1/2}}. 
\end{array}\hspace{-4ex}
\end{equation}
Let us choose $\beta_t := 1- \frac{1}{(t+2)^{2/3}} \in (0, 1)$ as in \eqref{eq:para_config0_a0}.
Then, it is easy to check that $\beta_t^2(1-\beta_t) \leq 1 - \beta_{t+1} \leq 1 - \beta_t$ after a few elementary calculations.

Moreover, we have $\theta_t :=  \frac{L_{\Phi_0}\sqrt{b}}{P(t+2)^{1/3}}$ as \eqref{eq:para_config0_a0}.
In addition, one can easily show that
\begin{equation*}
\left\{\begin{array}{ll}
&\sum_{t=0}^T\sqrt{1-\beta_t} = \sum_{t=0}^{T}\frac{1}{(t+2)^{1/3}} \geq \int_2^{T+3}\frac{ds}{s^{1/3}} = \frac{3}{2}[(T+3)^{2/3} - 2^{2/3}], \vspace{1ex}\\
&\sum_{t=0}^T\frac{(1-\beta_t)^2}{\sqrt{1-\beta_{t+1}}} = \sum_{t=0}^T\frac{(t+3)^{1/3}}{(t+2)^{4/3}} \leq \sum_{t=0}^{T}\frac{1}{(t+1)}  \leq 1 + \log(T+1).
\end{array}\right.
\end{equation*}
Here, we use the fact that $\int_t^{t+1}r(s)ds \leq r(t) \leq \int_{t-1}^tr(s)ds$ for a nonnegative and monotonically decreasing function $r$.

Substituting these estimates and $\sqrt{1-\beta_0} = \frac{1}{2^{1/3}}$ into \eqref{eq:proof100_a}, we eventually obtain
\begin{equation*} 
\arraycolsep=0.2em
\hspace{-1ex}\begin{array}{lcl}
\frac{1}{\sum_{t=0}^T\theta_t}\sum_{t=0}^T\theta_t\Exp{\norms{\Gc_{\eta_t}(x_t)}^2} 
& \leq & \frac{32P}{3\sqrt{b}\big[(T+3)^{2/3} - 2^{2/3}\big]} \big[ \Psi_0(x_0) - \Psi^{\star}_0 \big] \vspace{1ex}\\
&& + {~}  \frac{16Q}{3\big[(T+3)^{2/3} - 2^{2/3}\big]}\left[\frac{2^{1/3}}{\hat{b}_0} + \frac{2(1+\log(T+1))}{b}\right].
\end{array}\hspace{-4ex}
\end{equation*}
Combining this inequality and $\frac{1}{\sum_{t=0}^T\theta_t}\sum_{t=0}^T\theta_t\Exp{\norms{\Gc_{\eta_t}(x_t)}^2} = \Exp{\norms{\Gc_{\bar{\eta}_T}(\bar{x}_T)}^2}$, we have proved \eqref{eq:convergence_rate1_b_a0} for $T \geq 0$.
\end{proof}

\beforesubsec
\subsection{The proof of Theorem \ref{th:convergence2}: The nonsmooth case with constant step-size}\label{apdx:subsec:th:convergence2}
\aftersubsec

\begin{proof}[\textbf{The proof of Theorem \ref{th:convergence2} in the main text}]
Since $\mu_{\psi} = 0$, let us fix the smoothness parameter $\gamma_t = \gamma > 0$ and the weights $\beta_t = \hat{\beta}_t = \beta \in (0, 1]$ for all $t\geq 0$.
By Lemma~\ref{le:properties_of_phi}, we have 
\begin{equation*}
M_{\phi_{\gamma}} = M_{\psi}\norms{K}, \quad L_{\phi_{\gamma}} = \frac{\norms{K}^2}{\gamma}, \quad\text{and}\quad L_{\Phi_{\gamma}} =  L_FM_{\psi}\norms{K} + \frac{M_F^2\norms{K}^2}{\gamma}.
\end{equation*}
Given  batch sizes $b > 0$ and $\hat{b}_0 > 0$, for some $c_0 > 0$, let us also choose the mini-batch sizes as
\begin{equation*}
\hat{b}_1 = \hat{b}_2 := b, \quad  b_1 = b_2 := \frac{c_0b}{\gamma^2},  \quad\text{and}\quad b_0 := \frac{c_0\hat{b}_0}{\gamma^2}.
\end{equation*}
Recall that $P$, $Q$, and $L_{\Phi_{\gamma}}$ are defined by \eqref{eq:constant_defs}.
In this case, the quantities in \eqref{eq:D_new_quatities} become
\begin{equation*} 
\arraycolsep=0.2em
\left\{\begin{array}{lclcl}
\Theta_t & := & \Theta = \frac{M_F^2L_{\phi_{\gamma}}\sqrt{26b_1\hat{b}_1}}{3\big(\kappa M_F^4L_{\phi_{\gamma}}^2\hat{b}_1 + \hat{\kappa}M_{\phi_{\gamma}}^2L_F^2b_1\big)^{1/2}} = \frac{\sqrt{26c_0b}M_F^2\norms{K}^2}{3\gamma (\kappa M_F^4\norms{K}^4 + c_0\hat{\kappa}\norms{K}^2M_{\psi}^2L_F^2)^{1/2}} & \overset{\tiny\eqref{eq:constant_defs}}{=} & \frac{M_F^2\norms{K}^2b^{1/2}}{\gamma P}, \vspace{1ex}\\
\Gamma_t &:= & \Gamma = \frac{\sqrt{26b_1\hat{b}_1}}{3\big(\hat{b}_1\kappa M_F^4L_{\phi_{\gamma}}^2 + b_1\hat{\kappa}L_F^2M_{\phi_{\gamma}}^2\big)^{1/2}}\left(\frac{\kappa M_F^2L_{\phi_{\gamma}}^2\sigma_F^2}{b_2} + \frac{\hat{\kappa} M_{\phi_{\gamma}}^2\sigma_J^2}{\hat{b}_2}\right) & \overset{\tiny\eqref{eq:constant_defs}}{=} & \frac{Q}{P\sqrt{b}}, \vspace{1ex}\\
\Pi_0 & := & \frac{\sqrt{26b_1\hat{b}_1}}{3\big(\hat{b}_1\kappa M_F^4L_{\phi_{\gamma}}^2 + b_1\hat{\kappa} L_F^2M_{\phi_{\gamma}}^2\big)^{1/2}}\left(\frac{\kappa M_F^2L_{\phi_{\gamma}}^2\sigma_F^2}{b_0} + \frac{\hat{\kappa} M_{\phi_{\gamma}}^2\sigma_J^2}{\hat{b}_0}\right) 
& \overset{\tiny\eqref{eq:constant_defs}}{=} & \frac{Q\sqrt{b}}{P\hat{b}_0}. 
\end{array}\right.
\end{equation*}
Furthermore, the step-sizes in \eqref{eq:D_para_config} also become
\begin{equation*} 
\left\{\begin{array}{lcl}
\theta_t & := & \theta = \frac{3L_{\Phi_{\gamma}} [ b_1\hat{b}_1(1-\beta)]^{1/2}}{\sqrt{26}(\kappa M_F^4L_{\phi_{\gamma}}^2\hat{b}_1 + \hat{\kappa} M_{\phi_{\gamma}}^2L_F^2b_1)^{1/2}}
 \overset{\tiny\eqref{eq:constant_defs}}{=}  \frac{L_{\Phi_{\gamma}}[b(1-\beta)]^{1/2}}{P}, \vspace{1ex}\\
\eta_t  & := & \eta =  \frac{2}{L_{\Phi_{\gamma}}(3 + \theta)}.
\end{array}\right.
\end{equation*}
Therefore, we have $\omega_t := \frac{\theta}{L_{\Phi_{\gamma}}}$ and
\begin{equation*}
\begin{array}{lcl}
\Sigma_T  :=  \sum_{t=0}^T\omega_t = \frac{\theta (T+1)}{L_{\Phi_{\gamma}}} = \frac{(T+1)[b(1-\beta)]^{1/2}}{P}.
\end{array}
\end{equation*}
Substituting these expressions into \eqref{eq:D_convergence1}, we can further derive
\begin{equation}\label{eq:proof26}
\begin{array}{lcl}
\displaystyle \Exp{\norms{\Gc_{\eta}(\bar{x}_T)}^2} &\leq&  \frac{16P}{(T+1)[b(1-\beta)]^{1/2}}\Big(\Exp{\Psi_0(x_0) - \Psi^{\star}_0}  + \gamma B_{\psi} \Big) \vspace{1ex}\\
&& + {~} 8Q \left[ \frac{1}{\hat{b}_0(1-\beta)(T+1)}  + \frac{2(1 - \beta)}{b} \right].
\end{array}
\end{equation}
From the last term of \eqref{eq:proof26}, we can choose $\beta$ as $\beta = 1 - \frac{b^{1/2}}{[\hat{b}_0(T+1)]^{1/2}}$.
In this case, \eqref{eq:proof26} reduces to
\begin{equation}\label{eq:proof26b}
\displaystyle \Exp{\norms{\Gc_{\eta}(\bar{x}_T)}^2} \leq  \frac{16P\hat{b}_0^{1/4}}{[b(T+1)]^{3/4}}\Big(\Exp{\Psi_0(x_0) - \Psi^{\star}_0}  + \gamma B_{\psi} \Big)  + \frac{24Q}{[b\hat{b}_0(T+1)]^{1/2}}.
\end{equation}
Clearly, from \eqref{eq:proof26b},  to achieve the best convergence rate, we need to choose $\hat{b}_0 := c_1^2[b(T+1)]^{1/3}$.
Then, since we choose $0 < \gamma \leq 1$ and $\Exp{\Psi_{0}(x_0)} = \Psi_0(x_0)$, \eqref{eq:proof26b} can be overestimated as 
\begin{equation*}
\Exp{\norms{\Gc_{\eta}(\bar{x}_T)}^2} \leq \frac{\hat{\Delta}_0}{[b(T+1)]^{2/3}},
\end{equation*}
which proves \eqref{eq:key_est5}, where $\hat{\Delta}_0$ is defined by \eqref{eq:key_est5}, i.e.:  
\begin{equation*}
\begin{array}{l}
\hat{\Delta}_0 := 16P\sqrt{c_1}\big( \Psi_{0}(x_0) - \Psi^{\star}_0 + B_{\psi}\big) + \frac{24Q}{c_1}.
\end{array}
\end{equation*}
Now, for any tolerance $\varepsilon > 0$, to obtain $\Exp{\norms{\Gc_{\eta}(\bar{x}_T)}^2}\leq \varepsilon^2$, we require at most $T := \left\lfloor \frac{\hat{\Delta}_0^{3/2}}{b\varepsilon^3}\right\rfloor$ iterations.
In this case, the total number of function evaluations $\Tc_F$ is at most
\begin{equation*}
\begin{array}{lcl}
\Tc_F &:= & b_0 + (T+1)(2b_1 + b_2) = \frac{c_0}{\gamma^2}c_1^2[b(T+1)]^{1/3} + \frac{3c_0}{\gamma^2}[b(T+1)] =  \frac{c_0c_1^2\hat{\Delta}_0^{1/2}}{\gamma^2\varepsilon} +  \frac{3c_0\hat{\Delta}_0^{3/2}}{\gamma^2\varepsilon^3}.
\end{array}
\end{equation*}
Alternatively, the total number of Jacobian evaluations $\Tc_J$ is at most
\begin{equation*}
\begin{array}{lcl}
\Tc_J & := & \hat{b}_0 + (T+1)(2\hat{b}_1 + \hat{b}_2) = c_1[b(T+1)]^{1/3} + 3b(T+1) =  \frac{c_1^2\hat{\Delta}_0^{1/2}}{\varepsilon} +  \frac{3\hat{\Delta}_0^{3/2}}{\varepsilon^3}. 
\end{array}
\end{equation*}
If we choose $\gamma := c_2\varepsilon$ for some $c_2 > 0$, then 
\begin{equation*}
\Tc_F :=    \frac{c_0c_1^2\hat{\Delta}_0^{1/2}}{c_2^2\varepsilon^3} +  \frac{3c_0\hat{\Delta}_0^{3/2}}{c_2^2\varepsilon^5} = \BigO{ \frac{\hat{\Delta}_0^{3/2}}{\varepsilon^5}},
\end{equation*}
which proves the last statement.
\end{proof}

\beforesubsec
\subsection{The proof of Theorem~\ref{th:nonsmooth_diminishing}: The nonsmooth case with diminishing step-size}\label{apdx:subsec:th:convergence2_diminishing}
\aftersubsec

\begin{proof}[\textbf{The proof of Theorem~\ref{th:nonsmooth_diminishing} in the main text}]
Using the fact that $\mu_{\psi} = 0$, from Lemma~\ref{le:properties_of_phi}, we have 
\begin{equation*}
M_{\phi_{\gamma_t}} = M_{\psi}\norms{K}, \quad L_{\phi_{\gamma_t}} = \frac{\norms{K}^2}{\gamma_t}, \quad\text{and}\quad L_{\Phi_{\gamma_t}} =  L_FM_{\psi}\norms{K} + \frac{M_F^2\norms{K}^2}{\gamma_t},
\end{equation*}
where $\gamma_t > 0$, which will be appropriately updated.
Moreover, let us choose $b_0 := \frac{c_0\hat{b}_0}{\gamma_0^2}$, $\hat{b}_1 = \hat{b}_2 := b $, and $b_1^t = b_2^t := \frac{c_0b}{\gamma_{t}^2} > 0$, for some $b > 0$ and $c_0 > 0$.
We also recall $P$, $Q$, and $L_{\Phi_{\gamma}}$ from \eqref{eq:constant_defs}.

With these expressions, the quantities defined by \eqref{eq:D_new_quatities} and \eqref{eq:D_para_config} become
\begin{equation*}
\arraycolsep=0.2em
\left\{\begin{array}{lclcl}
\theta_t &:= & \frac{3L_{\Phi_{\gamma_t}} [ b_1^t\hat{b}_1(1-\beta_t) ]^{1/2}}{\sqrt{26}(\kappa M_F^4L_{\phi_{\gamma_t}}^2\hat{b}_1 + \hat{\kappa} M_{\phi_{\gamma_t}}^2L_F^2b_1^t)^{1/2}} 
& \overset{\tiny\eqref{eq:constant_defs}}{=} &  \frac{L_{\Phi_{\gamma_t}}[b(1-\beta_t)]^{1/2}}{P}, \vspace{1ex}\\
\Theta_t & := & \frac{M_F^2L_{\phi_{\gamma_t}}\sqrt{26b_1^t\hat{b}_1}}{3\big(\kappa M_F^4L_{\phi_{\gamma_t}}^2\hat{b}_1 + \hat{\kappa} M_{\phi_{\gamma_t}}^2L_F^2b_1^t\big)^{1/2}}  
 & \overset{\tiny\eqref{eq:constant_defs}}{=} & \frac{M_F^2\norms{K}^2b^{1/2}}{\gamma_t P}, \vspace{1ex}\\
\Gamma_{t} &:= & \frac{\sqrt{26b_1^t\hat{b}_1}}{3\big(\hat{b}_1\kappa M_F^4L_{\phi_{\gamma_{t}}}^2 + b_1^t\hat{\kappa} L_F^2M_{\phi_{\gamma_{t}}}^2\big)^{1/2}}\left(\frac{\kappa M_F^2L_{\phi_{\gamma_{t}}}^2\sigma_F^2}{b^t_2} + \frac{\hat{\kappa} M_{\phi_{\gamma_{t}}}^2\sigma_J^2}{\hat{b}_2}\right) 
 & \overset{\tiny\eqref{eq:constant_defs}}{=} & \frac{Q}{P\sqrt{b}}, \vspace{1ex}\\
\Pi_0 & := & \frac{\sqrt{26b_1^0\hat{b}_1}}{3\big(\hat{b}_1 \kappa M_F^4L_{\phi_{\gamma_0}}^2 + b_1^0\hat{\kappa} L_F^2M_{\phi_{\gamma_0}}^2\big)^{1/2}}\left(\frac{\kappa M_F^2L_{\phi_{\gamma_0}}^2\sigma_F^2}{b_0} + \frac{\hat{\kappa} M_{\phi_{\gamma_0}}^2\sigma_J^2}{\hat{b}_0}\right) 
 & \overset{\tiny\eqref{eq:constant_defs}}{=} & \frac{Q\sqrt{b}}{P\hat{b}_0}.
\end{array}\right.
\end{equation*}
Let us choose $\beta_t := 1- \frac{1}{(t+2)^{2/3}} \in (0, 1)$ and $\gamma_t := \frac{1}{(t+2)^{1/3}}$ as in \eqref{eq:para_config0_a2}.
Then, it is easy to check that 
\begin{equation*}
\frac{\beta_t^2(1-\beta_t)}{\Theta_t^2} \leq \frac{1 - \beta_{t+1}}{\Theta_{t+1}^2} \leq \frac{1 - \beta_t}{\Theta_t^2}.
\end{equation*}
In addition, as before, one can show that
\begin{equation*}
\left\{\begin{array}{ll}
&\sum_{t=0}^T\sqrt{1-\beta_t} = \sum_{t=0}^{T}\frac{1}{(t+2)^{1/3}} \geq \int_2^{T+3}\frac{ds}{s^{1/3}} = \frac{3}{2}[(T+3)^{2/3} - 2^{2/3}], \vspace{1ex}\\
&\sum_{t=0}^T\frac{(1-\beta_t)^2}{\sqrt{1-\beta_{t+1}}} = \sum_{t=0}^T\frac{(t+3)^{1/3}}{(t+2)^{4/3}} \leq \sum_{t=0}^{T}\frac{1}{(t+1)}  \leq 1 + \log(T+1).
\end{array}\right.
\end{equation*}
Using these estimates, we can easily prove
\begin{equation*}
\left\{\begin{array}{lcl}
\Sigma_T  :=  \sum_{t=0}^T\omega_t & = &  \frac{\sqrt{b}}{P} \sum_{t=0}^T\sqrt{1-\beta_t}  \geq  \frac{3\sqrt{b}[(T+3)^{2/3} - 2^{2/3} ]}{2P}, \vspace{1ex}\\
\sum_{t=0}^T\frac{\Gamma_{t+1}(1-\beta_t)^2}{\sqrt{1-\beta_{t+1}}} &\leq &   \frac{Q[1 + \log(T+1)]}{P\sqrt{b}}
\end{array}\right.
\end{equation*}
Substituting these inequalities into \eqref{eq:D_convergence1} and using $\sqrt{1-\beta_0} = \frac{1}{2^{1/3}}$, we further upper bound
\begin{equation*} 
\arraycolsep=0.2em
\begin{array}{lcl}
\Exp{\norms{\Gc_{\eta}(\bar{x}_T)}^2} & \leq & \frac{32P}{3\sqrt{b}[(T+3)^{2/3} - 2^{2/3}]}\Big( \Psi_0(x_0) - \Psi^{\star}_0  + \frac{B_{\psi}}{(T+2)^{1/3}} \Big) \vspace{1ex}\\
&& + {~}   \frac{16Q}{3[(T+3)^{2/3} - 2^{2/3}]}\left(\frac{2^{1/3}}{\hat{b}_0} + \frac{2(1 + \log(T+1))}{b}\right),
\end{array}
\end{equation*}
which proves \eqref{eq:convergence_rate1_b_a2}.
\end{proof}

\beforesec
\section{Restarting variant of Algorithm~\ref{alg:A1} and its convergence and complexity}\label{apdx:sec:restarting_hSGD}
\aftersec
In this Supp. Doc., we propose a simple restarting variant, Algorithm~\ref{alg:A2},  of Algorithm~\ref{alg:A1}, prove its convergence, and estimate its oracle complexity bounds for both smooth $\phi_0$ and nonsmooth $\phi_0$ in \eqref{eq:com_nlp}.
For simplicity of our analysis, we only consider the constant step-size case, and omit the diminishing step-size analysis.

\beforesubsec
\subsection{Restarting variant}
\aftersubsec
\textbf{Motivation:}
Since the constant step-size $\theta$ in \eqref{eq:para_config0} of Theorem~\ref{th:convergence2_scvx} and \eqref{eq:choice_of_para_ncvx} of Theorem~\ref{th:convergence2} depends on the number of iterations $T$.
Clearly, if $T$ is large, then $\theta$ is small. 
To avoid using small step-size $\theta$, we can restart Algorithm~\ref{alg:A1} by frequently resetting its initial point and parameters after $T$ iterations.
This variant is described in Algorithm~\ref{alg:A2}.
Algorithm~\ref{alg:A2} has two loops, where each iteration $s$ of the outer loop is called the $s$-th stage.
Unlike  the outer loop in other variance-reduced methods relying on SVRG or SARAH estimators  from the literature, which is mandatory to guarantee convergence, our outer loop is optional, since without it, Algorithm~\ref{alg:A2} reduces to Algorithm~\ref{alg:A1}, and it still converges. 

\begin{algorithm}[hpt!]\caption{(Restarting Variant of Algorithm~\ref{alg:A1})}\label{alg:A2}
\normalsize
\begin{algorithmic}[1]
   \State{\bfseries Inputs:} An arbitrarily initial point $\tilde{x}^{0} \in\dom{F}$, and a fixed number of iterations $T$.
   \vspace{0.65ex}   
   \State\hspace{0ex}\label{A2_step:o4}{\bfseries For $s := 1,\cdots, S$ do}
   \vspace{0.5ex}   
   \State\hspace{3ex}\label{A2_step:i1} Run Algorithm~\ref{alg:A1} for $T$ iterations starting from $x_0^{(s)} := \tilde{x}^{s-1}$.
   \vspace{0.5ex}   
   \State\hspace{3ex}\label{A2_step:i2} Set $\tilde{x}^{s} := x^{(s)}_{T+1}$ as the last iterate of Algorithm~\ref{alg:A1}.
   \vspace{0.5ex}   
   \State\hspace{0ex}{\bfseries EndFor}
   \vspace{0.5ex}   
   \State\hspace{0ex}\label{step:o5}\textbf{Output:} Choose $\bar{x}_N$ randomly from $\sets{x_t^{(s)}}_{t=0\to T}^{s=1\to S}$ such that $\Prob{\bar{x}_N = x_t^{(s)}} = \frac{\theta_t}{S\sum_{j=0}^T\theta_j}$. 
\end{algorithmic}
\end{algorithm}

\beforesubsec
\subsection{The smooth case $\phi_0$ with constant step-size}
\aftersubsec
The smoothness of $\phi_0$ is equivalent to the $\mu_{\psi}$-strong convexity of $\psi$ in \eqref{eq:min_max_form}.
The following theorem states convergence rate and estimates oracle complexity of Algorithm~\ref{alg:A2}.

\begin{theorem}\label{th:convergence3}
Suppose that Assumptions~\ref{ass:A1} and \ref{ass:A2} hold, $\psi$ is strongly convex $($i.e., $\mu_{\psi} = 1 > 0$$)$, and $P$, $Q$, and $L_{\Phi_0}$ are defined by \eqref{eq:constant_defs}.
Let $\sets{x^{(s)}_t}_{t=0\to T}^{s=1\to S}$ be  generated by Algorithm~\ref{alg:A2} using $\gamma := 0$, $b_0 := c_0\hat{b}_0$, $b_1 = b_2  := c_0b$, $\hat{b}_1 = \hat{b}_2 = b$ for some $c_0 > 0$ and given batch sizes $b > 0$ and $\hat{b}_0 > 0$, and the parameter configuration \eqref{eq:para_config0}.
Then, the following estimate holds
\begin{equation}\label{eq:restart_var}
\Exp{\norms{\Gc_{\eta}(\bar{x}_N)}^2} \leq \frac{16P\hat{b}_0^{1/4}}{S[b(T+1)]^{3/4}}\big[\Psi_0(\tilde{x}^{0}) - \Psi_0^{\star}\big] + \frac{24Q}{[\hat{b}_0b(T+1)]^{1/2}},
\end{equation}
where $\bar{x}_N$ is uniformly randomly chosen from $\sets{x^{(s)}_t}_{t=0\to T}^{s=1\to S}$.

Given $\varepsilon > 0$, if we choose $T  := \rounds{\frac{48Q}{b\varepsilon^2}}$ and $\hat{b}_0 := \rounds{\frac{48Q}{\varepsilon^2}}$, then after at most $S := \rounds{\frac{8P}{\varepsilon\sqrt{3Q}}}$ outer iterations, we obtain $\Exp{\norms{\Gc_{\eta}(\bar{x}_N)}^2} \leq\varepsilon^2$.
Consequently, the total number of function evaluations $\Tc_F$ and the total number of Jacobian evaluations $\Tc_J$ are at most $\Tc_F = \Tc_J :=  \rounds{\frac{400 P\sqrt{3Q}}{\varepsilon^3}}$.
\end{theorem}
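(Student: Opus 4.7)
The plan is to chain the one-stage analysis underlying Theorem~\ref{th:convergence2_scvx} across the $S$ restarts by converting the Lyapunov descent of Lemma~\ref{le:descent_property} into a per-stage telescope on the true objective gap $\Psi_0(\tilde{x}^s) - \Psi_0^{\star}$. Within a single stage $s$, I specialize Lemma~\ref{le:descent_property} to $\gamma_t\equiv 0$, sum the descent over $t=0,\ldots,T$ with the constant-parameter choice of \eqref{eq:para_config0}, bound the tail by $V_0(x^{(s)}_{T+1}) \geq \Exp{\Psi_0(\tilde{x}^s)}$, and bound the restart head by $V_{-1}(x^{(s)}_0) \leq \Exp{\Psi_0(\tilde{x}^{s-1})} + \tfrac{\alpha_0\sigma_F^2}{2b_0} + \tfrac{\hat{\alpha}_0\sigma_J^2}{2\hat{b}_0}$, using that the fresh mini-batches $\tilde{F}_0,\tilde{J}_0$ are unbiased averages with variance governed by Assumption~\ref{ass:A1}(b). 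This yields the per-stage inequality
\begin{equation*}
\Exp{\Psi_0(\tilde{x}^s) - \Psi_0^{\star}} + \tfrac{\omega(T+1)}{16}\Exp{\norms{\Gc_{\eta}(\bar{x}^{(s)})}^2} \leq \Exp{\Psi_0(\tilde{x}^{s-1}) - \Psi_0^{\star}} + C,
\end{equation*}
where $\omega := \theta/L_{\Phi_0}$, $\bar{x}^{(s)}$ is the uniform random iterate in stage $s$ (since $\theta_t$ is constant), and $C$ collects the restart-initialization noise $\sim \Pi_0/\sqrt{1-\beta}$ and the within-stage noise $\sim (T+1)\Gamma(1-\beta)^{3/2}$ from \eqref{eq:D_new_quatities}.

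Next, I telescope this per-stage bound over $s=1,\ldots,S$, drop the nonnegative $\Exp{\Psi_0(\tilde{x}^S)-\Psi_0^{\star}}$, and divide by $S$. The output rule of Algorithm~\ref{alg:A2} factors as uniform over stages times the within-stage $\theta_t$-weighted distribution, so $\Exp{\norms{\Gc_{\eta}(\bar{x}_N)}^2} = \tfrac{1}{S}\sum_{s=1}^S\Exp{\norms{\Gc_{\eta}(\bar{x}^{(s)})}^2}$. This yields
\begin{equation*}
\Exp{\norms{\Gc_{\eta}(\bar{x}_N)}^2} \leq \tfrac{16}{S\,\omega(T+1)}\big[\Psi_0(\tilde{x}^{0}) - \Psi_0^{\star}\big] + \tfrac{16 C}{\omega(T+1)}.
\end{equation*}
Substituting $1-\beta = \sqrt{b/[\hat{b}_0(T+1)]}$ and the expressions for $\Pi_0,\Gamma$ from \eqref{eq:D_new_quatities} simplifies the two constants to $\tfrac{16P\hat{b}_0^{1/4}}{[b(T+1)]^{3/4}}$ and $\tfrac{24Q}{[\hat{b}_0 b(T+1)]^{1/2}}$, giving \eqref{eq:restart_var}.

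For the complexity, tuning $T\sim 48Q/(b\varepsilon^2)$ and $\hat{b}_0\sim 48Q/\varepsilon^2$ reduces the noise term to $\varepsilon^2/2$, after which the initial-gap term simplifies to $\tfrac{4P\varepsilon}{S\sqrt{3Q}}[\Psi_0(\tilde{x}^{0})-\Psi_0^{\star}]$, so $S\sim 8P/(\varepsilon\sqrt{3Q})$ drives the total below $\varepsilon^2$. The per-stage sample cost is $c_0\hat{b}_0 + 3c_0 b(T+1)$ for $\Fb$ and $\hat{b}_0 + 3b(T+1)$ for $\Fb'$; multiplying by $S$ and inserting the tuned values gives the stated $\Tc_F = \Tc_J = \mathcal{O}(P\sqrt{Q}/\varepsilon^3)$.

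The main obstacle is the first step: the Lyapunov descent naturally telescopes on $V_\gamma$ (which carries estimator-variance terms), not on $\Psi_0$ alone. To carry the telescope across a restart, I must discard the Lyapunov-variance tail at the end of each stage (harmless, since $V_0\geq\Psi_0$ when $\gamma=0$) and re-pay that variance at the start of the next stage from the freshly sampled $\tilde{F}_0,\tilde{J}_0$. The delicate check is that this re-paid initialization cost has the same scaling as the within-stage noise, so it can be absorbed into a single stage-noise constant $C$ rather than blowing up by a factor of $S$; this uses $\gamma=0$ (so the $\gamma_TB_\psi$ term of Theorem~\ref{thm:comp_D1} vanishes) together with the proof's specific choice of $\alpha_0,\hat{\alpha}_0$. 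A smaller but necessary check is that the output distribution on $\bar{x}_N$ factors exactly as uniform-over-$s$ times the within-stage distribution, which is what turns the sum over $S$ stages into a genuine $1/S$ improvement on the initial-gap term.
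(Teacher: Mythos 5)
Your proposal is correct and follows essentially the same route as the paper's own proof: specialize the Lyapunov descent to $\gamma=0$ with constant parameters, telescope $V_0$ within each stage, pay the restart-initialization variance $\Pi_0/\sqrt{1-\beta}$ from the fresh mini-batches at each stage head, lower-bound the stage tail by $\Psi_0(\tilde{x}^s)$ so that the objective gap telescopes across stages, and then average over the $S(T+1)$ uniformly weighted iterates before tuning $T$, $\hat{b}_0$, and $S$ exactly as in the statement. The two checks you flag as delicate (that the per-stage noise is averaged rather than accumulated over $S$, and that the output law is uniform since $\theta_t$ is constant) are precisely the points the paper's argument relies on.
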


Theorem~\ref{th:convergence3} holds for any mini-batch $b$ such that $1 \leq b \leq \frac{48Q}{\varepsilon^2}$, which is different from, e.g., \cite{zhang2019multi}, where the complexity result holds under large batches.
Moreover, the total oracle calls $\Tc_F$ and $\Tc_J$ are independent of $b$.
In this case, the weight $\beta$  and the step-size $\theta$ become
\begin{equation*}
\beta :=  1 - \frac{b\varepsilon^2}{48Q}
\quad\text{and}\quad
\theta :=   \frac{bL_{\Phi_0}}{4P\varepsilon\sqrt{3Q}}.
\end{equation*}
Clearly, if $b$ is large, then our step-size $\theta$ is also large.

\begin{proof}[\textbf{The proof of Theorem~\ref{th:convergence3}: Restarting variant}]
Since $\gamma := 0$, $\hat{b}_1 = \hat{b}_2 := b$ and $b_1 = b_2 := c_0b$, 
from \eqref{eq:D_proof26}, using the superscript ``$^{(s)}$'' for the outer iteration $s$, and $P$ and $Q$ from \eqref{eq:constant_defs},  we have
\begin{equation*} 
\frac{\theta}{16L_{\Phi_{0}}}\Exp{\norms{\Gc_{\eta}(x_t^{(s)})}^2} \leq  V_{0}(x_t^{(s)}) - V_{0}(x_{t+1}^{(s)}) + \frac{Q(1-\beta)^{3/2}}{Pb^{1/2}},
\end{equation*}
Summing up this inequality from $t :=0$ to $t :=T$, and using the fact that $\tilde{x}^{s-1} := x_0^{(s)}$ and $\tilde{x}^{s} := x_{T+1}^{(s)}$, we get
\begin{equation*} 
\frac{\theta}{16L_{\Phi_{0}}}\sum_{t=0}^T\Exp{\norms{\Gc_{\eta}(x_t^{(s)})}^2} \leq  V_{0}(\tilde{x}^{s-1}) - V_{0}(\tilde{x}^{s}) + \frac{Q(T+1)(1-\beta)^{3/2}}{Pb^{1/2}}.
\end{equation*}
Using the choice $b_0 := c_0\hat{b}_0$, similar to the proof of \eqref{eq:D_proof27}, we can show that
\begin{equation*} 
\hspace{-1ex}\begin{array}{lcl}
V_0(\tilde{x}^{s-1}) &= & \Exp{\Psi_0(\tilde{x}^{s-1})} + \frac{\alpha}{2}\Exp{\norms{\tilde{F}_0^{(s)} - F(\tilde{x}^{s-1})}^2} + \frac{\hat{\alpha}}{2}\Exp{\norms{\tilde{J}_0^{(s)} - F'(\tilde{x}^{s-1})}^2} \vspace{1ex}\\
&\leq&  \Exp{\Psi_{0}(\tilde{x}^{s-1})} +\frac{Q b^{1/2}}{2P \hat{b}_0\sqrt{1-\beta}}.
\end{array}\hspace{-4ex}
\end{equation*}
Using this estimate and and $V_{0}(\tilde{x}^{s}) \geq \Psi_0(\tilde{x}^{s})$ into above inequality, we can further derive
\begin{equation*} 
\begin{array}{lcl}
\frac{1}{(T+1)}\sum_{t=0}^T\Exp{\norms{\Gc_{\eta}(x_t^{(s)})}^2} &\leq &  \frac{16L_{\Phi_0}}{\theta(T+1)}\left[\Psi_{0}(\tilde{x}^{s-1}) - \Psi_{0}(\tilde{x}^{s})\right] + \frac{16 Q L_{\Phi_0}(1-\beta)^{3/2}}{P\theta b^{1/2}} \vspace{1ex}\\
&& + {~} \frac{8Q L_{\Phi_0} b^{1/2}}{P\theta(T+1)\hat{b}_0\sqrt{1-\beta}}.
\end{array}
\end{equation*}
Due to the choice of $b_1$ and $\hat{b}_1$, it follows from \eqref{eq:para_config0} that $\beta := 1 - \frac{b^{1/2}}{[\hat{b}_0(T+1)]^{1/2}}$ and $\theta :=  \frac{L_{\Phi_0}b^{3/4}}{P[\hat{b}_0(T+1)]^{1/4}}$.
Therefore, the last inequality becomes
\begin{equation*} 
\frac{1}{(T+1)}\sum_{t=0}^T\Exp{\norms{\Gc_{\eta}(x_t^{(s)})}^2} \leq  \frac{16P\hat{b}_0^{1/4}}{[b(T+1)]^{3/4}}\left[\Psi_{0}(\tilde{x}^{s-1}) - \Psi_{0}(\tilde{x}^{s})\right] +  \frac{24Q}{[\hat{b}_0b(T+1)]^{1/2}}.
\end{equation*}
Summing up this inequality from $s := 1$ to $s := S$ and multiplying the result by $\frac{1}{S}$, we get
\begin{equation*} 
\hspace{-1ex}
\frac{1}{S(T+1)}\sum_{s=1}^S\sum_{t=0}^T\Exp{\norms{\Gc_{\eta}(x_t^{(s)})}^2} \leq   \frac{16P\hat{b}_0^{1/4}}{S[b(T+1)]^{3/4}} \big[\Psi_0(\tilde{x}^{0}) - \Psi_0(\tilde{x}^{S})\big]  +  \frac{24Q}{[\hat{b}_0b(T+1)]^{1/2}}.
\hspace{-2ex}
\end{equation*}
Substituting $\Psi_0(\tilde{x}^{S}) \geq \Psi_0^{\star}$ into the last inequality, and using the fact that $\Exp{\norms{\Gc_{\eta}(\bar{x}_N)}^2} = \frac{1}{S(T+1)}\sum_{s=1}^S\sum_{t=0}^T\Exp{\norms{\Gc_{\eta}(x_t^{(s)})}^2}$, we obtain 
\begin{equation*} 
\hspace{-1ex}
\begin{array}{lcl}
\Exp{\norms{\Gc_{\eta}(\bar{x}_N)}^2} &= & \frac{1}{S(T+1)}\sum_{s=1}^S\sum_{t=0}^T\Exp{\norms{\Gc_{\eta}(x_t^{(s)})}^2} \vspace{1ex}\\
& \leq &  \frac{16P\hat{b}_0^{1/4}}{S[b(T+1)]^{3/4}} \big[\Psi_0(\tilde{x}^{0}) - \Psi_0^{\star}\big]  + \frac{24Q}{[\hat{b}_0b(T+1)]^{1/2}},
\end{array}
\hspace{-2ex}
\end{equation*}
which is exactly \eqref{eq:restart_var}.

Now, for a given tolerance $\varepsilon > 0$, to obtain $\Exp{\norms{\Gc_{\eta}(\bar{x}_K)}^2} \leq\varepsilon^2$, we need to impose
\begin{equation*}
 \frac{16P \hat{b}_0^{1/4}}{S[b(T+1)]^{3/4}} = \frac{\varepsilon^2}{2}   \quad \text{and}\quad \frac{24Q }{[ \hat{b}_0b(T+1)]^{1/2}} = \frac{\varepsilon^2}{2}.
\end{equation*}
This condition leads to $N = S(T+1) = \frac{32P [ \hat{b}_0(T+1)]^{1/4}}{b^{3/4}\varepsilon^2}$ and $\hat{b}_0b(T+1) = \frac{48^2 Q^2}{\varepsilon^4}$.
Hence, the total number of iterations is $N := S(T+1) = \frac{32P [\hat{b}_0b(T+1)]^{1/4}}{b\varepsilon^2} = \frac{128 P\sqrt{3Q}}{b\varepsilon^3}$.

Clearly, to optimize the oracle complexity, we need to choose $T + 1 := \frac{48Q}{b\varepsilon^2}$, then $\hat{b}_0 := \frac{48Q}{\varepsilon^2}$ and $S := \frac{8P}{\sqrt{3Q}\varepsilon}$.
In this case, the total number of function evaluations is at most
\begin{equation*}
\Tc_F := b_0S + 3bS(T+1) = \frac{48Q}{\varepsilon^2}\cdot \frac{8P}{\sqrt{3Q}\varepsilon} + 3bN = \frac{16P\sqrt{3Q}}{\varepsilon^3} + \frac{384 P\sqrt{3Q}}{\varepsilon^3} = \frac{400 P\sqrt{3Q}}{\varepsilon^3}.
\end{equation*}
This is also the total number of Jacobian evaluations $\Tc_J$.
\end{proof}

\beforesubsec
\subsection{The nonsmooth $\phi_0$ with constant step-size}
\aftersubsec
Finally, we prove the convergence of Algorithm~\ref{alg:A2} when $\psi$ is non-strongly convex (i.e., $\phi_0$ in \eqref{eq:com_nlp} is possibly nonsmooth).

\begin{theorem}\label{th:convergence2b}
Assume that Assumptions~\ref{ass:A1} and \ref{ass:A2} hold, $\psi$ in \eqref{eq:min_max_form} is non-strongly convex $($i.e., $\mu_{\psi} = 0$$)$, and $P$, $Q$, and $L_{\Phi_{\gamma}}$ are defined by \eqref{eq:constant_defs}.
Let $\sets{x_t^{(s)}}_{t=0\to T}^{s=1\to S}$ be generated by Algorithm~\ref{alg:A2} after $N := S(T+1)$ iterations using:
\begin{equation}\label{eq:choice_of_para_ncvx_2b}
\left\{\begin{array}{ll}
&b_1 = b_2 := \frac{2c_0b\hat{R}_0}{\varepsilon^2},  \quad \hat{b}_1 = \hat{b}_2 := b, \quad b_0 := \frac{4c_0\hat{R}_0^2}{\varepsilon^4}, \quad \hat{b}_0 := \frac{2\hat{R}_0}{\varepsilon^2}, \vspace{1ex}\\
&\gamma := \frac{\varepsilon}{\sqrt{2\hat{R}_0}},\quad\text{and}\quad \beta := 1  - \frac{b\varepsilon^2}{2\hat{R}_0}.
\end{array}\right.
\end{equation}
where $\varepsilon > 0$ is a given tolerance\footnote{The batch sizes and $T$ in this paper must be integer, but for simplicity, we do not write their rounding form.}, and
\begin{equation}\label{eq:R0_def5} 
R_0  := 16\big[ \Psi_{0}(\tilde{x}^{0}) - \Psi^{\star}  + B_{\psi}\big] \quad\text{and}\quad \hat{R}_0 := 24Q.
\end{equation}
Then, if we choose $T := \rounds{\frac{2\hat{R}_0}{\varepsilon^2}}$, then after at most $S := \rounds{\frac{\sqrt{2}R_0}{b\varepsilon\sqrt{\hat{R}_0}}}$ outer iterations, we obtain $\bar{x}_T$ such that $\Exp{\norms{\Gc_{\eta}(\bar{x}_T)}^2}\leq\varepsilon^2$.

Consequently, the total number of function evaluations $\Tc_F$ and the total number of Jacobian evaluations $\Tc_J$ are respectively at most
\begin{equation*}
\begin{array}{l}
\Tc_F :=  \frac{4\sqrt{2}c_0R_0\hat{R}_0^{3/2}(3 + b^{-1})}{\varepsilon^5} = \BigO{\frac{R_0\hat{R}_0^{3/2}}{\varepsilon^{5}}}
\quad\text{and}\quad
\Tc_J :=  \frac{2\sqrt{2}R_0\hat{R}_0^{1/2}(3 + b^{-1})}{\varepsilon^3} = \BigO{\frac{R_0\hat{R}_0^{1/2}}{\varepsilon^{3}}}.
\end{array}
\end{equation*}
\end{theorem}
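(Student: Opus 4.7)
\textbf{Proof plan for Theorem~\ref{th:convergence2b}.} The strategy is to mirror the proof of Theorem~\ref{th:convergence3}, but applied to the smoothed problem $\Psi_\gamma$ with a fixed smoothing parameter $\gamma > 0$, and then pay the $\gamma B_\psi$ smoothing price at the very end via Lemma~\ref{le:properties_of_phi}(d). The core estimate remains the one-step inequality \eqref{eq:D_proof26}, which, specialized to constant parameters $\gamma_t=\gamma$, $\beta_t=\hat\beta_t=\beta$, $\theta_t=\theta$, and the batch choices $b_1=b_2=\tfrac{c_0 b}{\gamma^2}$, $\hat b_1=\hat b_2=b$, reduces the noise term (as computed in the proof of Theorem~\ref{th:convergence2}) to $\Gamma(1-\beta)^{3/2}/\sqrt{1-\beta}$ with $\Gamma = Q/(P\sqrt{b})$, so that each inner step gives
\begin{equation*}
\frac{\theta}{16 L_{\Phi_\gamma}} \Exp{\norms{\Gc_\eta(x_t^{(s)})}^2} \le V_\gamma(x_t^{(s)}) - V_\gamma(x_{t+1}^{(s)}) + \frac{Q(1-\beta)^{3/2}}{P\sqrt{b}}.
\end{equation*}

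The next step is a double telescoping. Within stage $s$, summing the inequality from $t=0$ to $t=T$ and using $x_0^{(s)} = \tilde x^{s-1}$, $x_{T+1}^{(s)} = \tilde x^s$, yields an upper bound on $\sum_t \Exp{\norms{\Gc_\eta(x_t^{(s)})}^2}$ in terms of $V_\gamma(\tilde x^{s-1}) - V_\gamma(\tilde x^s)$ plus a $(T+1)$-multiple of the noise term. I would then bound the initial Lyapunov value at each stage by
\begin{equation*}
V_\gamma(\tilde x^{s-1}) \le \Exp{\Psi_\gamma(\tilde x^{s-1})} + \frac{Q\sqrt b}{2 P \hat b_0 \sqrt{1-\beta}},
\end{equation*}
exactly as in the derivation of \eqref{eq:D_proof27}, and use $V_\gamma(\tilde x^s) \ge \Psi_\gamma(\tilde x^s)$. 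Summing over $s=1,\dots,S$ then telescopes $\Psi_\gamma(\tilde x^{s-1}) - \Psi_\gamma(\tilde x^s)$ into $\Psi_\gamma(\tilde x^0) - \Psi_\gamma(\tilde x^S)$, and finally Lemma~\ref{le:properties_of_phi}(d) upgrades this to $\Psi_0(\tilde x^0) - \Psi_0^\star + \gamma B_\psi$. Dividing through by $S(T+1)$ and by $\theta/(16 L_{\Phi_\gamma})$, and using the definition $\bar x_N$ is sampled uniformly from $\{x_t^{(s)}\}$, produces a three-term bound of the form
\begin{equation*}
\Exp{\norms{\Gc_\eta(\bar x_N)}^2} \le \frac{16 P \hat b_0^{1/4}}{S[b(T+1)]^{3/4}}\bigl[\Psi_0(\tilde x^0) - \Psi_0^\star + \gamma B_\psi\bigr] + \frac{24 Q}{[\hat b_0 b (T+1)]^{1/2}},
\end{equation*}
after substituting $\beta=1-\tfrac{b^{1/2}}{[\hat b_0(T+1)]^{1/2}}$ and $\theta = \tfrac{L_{\Phi_\gamma} b^{3/4}}{P[\hat b_0(T+1)]^{1/4}}$ from \eqref{eq:choice_of_para_ncvx}.

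The final step is purely algebraic tuning. To make the right-hand side at most $\varepsilon^2$, I would set $\hat b_0 b (T+1) = (48Q/\varepsilon^2)^2$, so the second term equals $\varepsilon^2/2$, and then split $\hat b_0 = 2\hat R_0/\varepsilon^2$, $T+1 = 2\hat R_0/\varepsilon^2$ with $\hat R_0 = 24Q$. To handle the $\gamma B_\psi$ contribution inside the bracket without blowing up $R_0$, I fix $\gamma = \varepsilon/\sqrt{2\hat R_0}$, which keeps $\gamma B_\psi = \mathcal{O}(B_\psi)$ and so absorbs into $R_0 = 16[\Psi_0(\tilde x^0) - \Psi_0^\star + B_\psi]$ as defined in \eqref{eq:R0_def5}. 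With these choices the first term becomes $R_0/(S b)\cdot\varepsilon$ (times a constant), and demanding it equal $\varepsilon^2/2$ gives $S = \sqrt 2 R_0/(b\varepsilon\sqrt{\hat R_0})$. The oracle complexity then follows by direct counting: each stage costs $b_0 + (2b_1+b_2)(T+1) = \mathcal{O}(c_0\hat R_0^2/\varepsilon^4 + c_0 b \hat R_0^2/\varepsilon^4)$ function evaluations and $\hat b_0 + (2\hat b_1 + \hat b_2)(T+1) = \mathcal{O}(\hat R_0/\varepsilon^2 + b\hat R_0/\varepsilon^2)$ Jacobian evaluations, and multiplying by $S$ gives the stated $\Tc_F$ and $\Tc_J$.

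The main obstacle I anticipate is the careful accounting of $\gamma$-dependent constants: $L_{\Phi_\gamma}$, $L_{\phi_\gamma} = \norms{K}^2/\gamma$, and the $\gamma^{-2}$ scaling of the inner batch sizes $b_0, b_1, b_2$ all have to combine so that the quantities $\Theta$, $\Gamma$, $\Pi_0$ simplify to the same $\gamma$-independent form as in the smooth case (compare the proof of Theorem~\ref{th:convergence2}). Once that cancellation is verified, the rest of the argument is a routine transcription of the proof of Theorem~\ref{th:convergence3} with $\Psi_0$ replaced by $\Psi_\gamma$ and a single extra $\gamma B_\psi$ bias term.
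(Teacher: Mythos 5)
Your plan follows the paper's proof of Theorem~\ref{th:convergence2b} essentially verbatim: specialize the one-step bound \eqref{eq:D_proof26} to constant $\gamma$, $\beta$, $\theta$ with the $\gamma^{-2}$-scaled batches (so that $\Theta$, $\Gamma$, $\Pi_0$ become $\gamma$-free, exactly as verified in the proof of Theorem~\ref{th:convergence2}), telescope twice, bound the per-stage Lyapunov initialization as in \eqref{eq:D_proof27}, pay $\gamma B_{\psi}$ once via Lemma~\ref{le:properties_of_phi}(d), and tune. Your variant of telescoping $\Psi_{\gamma}$ across stages and converting to $\Psi_0$ only at the two endpoints is, if anything, a slightly cleaner way to handle the smoothing bias than the paper's per-stage conversion.

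The one concrete slip is in the tuning of $\beta$. You substitute $\beta = 1 - b^{1/2}/[\hat{b}_0(T+1)]^{1/2}$ from \eqref{eq:choice_of_para_ncvx}, which with $\hat{b}_0 = T+1$ gives $1-\beta = \sqrt{b}/(T+1)$ and makes the first term scale as $1/(S\,b^{3/4}(T+1)^{1/2})$; demanding it equal $\varepsilon^2/2$ then forces $S = \sqrt{2}R_0/(b^{3/4}\varepsilon\sqrt{\hat{R}_0})$, a factor $b^{1/4}$ larger than the $S$ you (and the theorem) state, and inconsistent with the theorem's $\beta = 1 - b\varepsilon^2/(2\hat{R}_0)$. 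The paper instead takes $1-\beta = b/(T+1)$ together with $\hat{b}_0 := T+1$, which makes the first term $16P/(S\,b\,(T+1)^{1/2})$ and directly yields $S = \sqrt{2}R_0/(b\varepsilon\sqrt{\hat{R}_0})$ and the stated $\beta$. This does not affect the $\BigO{\varepsilon^{-5}}$ and $\BigO{\varepsilon^{-3}}$ orders, but you need the paper's choice of $\beta$ to recover the exact constants and parameter settings in \eqref{eq:choice_of_para_ncvx_2b}.
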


\begin{remark}
Note that we do not need to choose the batch sizes and parameters depending on $R_0$ as in \eqref{eq:choice_of_para_ncvx_2b}, which is unknown since $\Psi_0^{\star}$ is unknown, but they are proportional to $R_0$.
In this case, the complexity bounds in Theorem~\ref{th:convergence2b} will only be shifted by a constant factor. 

As we can see from Theorem~\ref{th:convergence2b}, the number of outer iterations $S$ is divided by the batch size $b$.
However, the terms $\frac{12\sqrt{2}c_0R_0\hat{R}_0^{3/2}}{\varepsilon^5}$ and $\frac{6\sqrt{2}R_0\hat{R}_0^{1/2}}{\varepsilon^3}$ are independent of $b$ and dominate the complexity bounds in both $\Tc_F$ and $\Tc_J$, respectively. 
\end{remark}

\begin{proof}[\textbf{The proof of Theorem~\ref{th:convergence2b}}]
Let us first choose $\hat{b}_1 = \hat{b}_2 := b$, $b_1 = b_2 := \frac{c_0b}{\gamma^2}$, and $b_0 := \frac{c_0\hat{b}_0}{\gamma^2}$.
With the same line as the proof of  \eqref{eq:proof26}, we can show that
\begin{equation*} 
\hspace{-1ex}\begin{array}{lcl}
\frac{1}{(T+1)}\sum_{t=0}^T\Exp{\norms{\Gc_{\eta}(x_t^{(s)})}^2} &\leq& \frac{16P}{(T+1)[b(1-\beta)]^{1/2}}\big[\Exp{\Psi_{0}(x_0^{(s)})} - \Exp{\Psi_0(x_{T+1}^{(s)})}  + \gamma B_{\psi}\big] \vspace{1ex}\\
&& + {~} 8Q \left[ \frac{1}{\hat{b}_0(1-\beta)(T+1)}  + \frac{2(1 - \beta)}{b} \right].
\end{array}\hspace{-2ex}
\end{equation*}
Here, we use the superscript ``$^{(s)}$'' to present the outer iteration $s$.
Moreover, instead of $\Psi_0^{*}$, we keep $\Psi_0(x^{(s)}_{T+1})$ from \eqref{eq:D_V_to_Psi}.
Now, using the fact that $\tilde{x}^{s-1} = x_0^{(s)}$ and $\tilde{x}^{s} = x_{T+1}^{(s)}$, we can further derive from the above inequality that
\begin{equation*} 
\hspace{-1ex}\begin{array}{lcl}
\frac{1}{(T+1)}\sum_{t=0}^T\Exp{\norms{\Gc_{\eta}(x_t^{(s)})}^2} &\leq& \frac{16P}{(T+1)[b(1-\beta)]^{1/2}}\big[\Exp{\Psi_{0}(\tilde{x}^{s-1})} - \Exp{\Psi_0(\tilde{x}^{s})}  + \gamma B_{\psi}\big] \vspace{1ex}\\
&& + {~} 8Q \left[ \frac{1}{\hat{b}_0(1-\beta)(T+1)}  + \frac{2(1 - \beta)}{b} \right]. 
\end{array}\hspace{-2ex}
\end{equation*}
Summing up this inequality from $s:=1$ to $s := S$, and multiplying the result by $\frac{1}{S}$, and then using $0 < \gamma \leq 1$, $\Exp{\Psi_{0}(\tilde{x}^{0})} = \Psi_{0}(\tilde{x}^{0})$,  $\Psi_0(\tilde{x}^{S}) \geq \Psi_0^{\star} > -\infty$, and $ \Exp{\norms{\Gc_{\eta}(\bar{x}_N)}^2} =  \frac{1}{S(T+1)}\sum_{s=1}^S\sum_{t=0}^T \Exp{\norms{\Gc_{\eta}(x_t^{(s)})}^2}$, we arrive at
\begin{equation*} 
\hspace{-1ex}\begin{array}{lcl}
\Exp{\norms{\Gc_{\eta}(\bar{x}_N)}^2} &= & \frac{1}{(T+1)S}\sum_{s=1}^S \sum_{t=0}^T\Exp{\norms{\Gc_{\eta}(x_t^{(s)})}^2} \vspace{1ex}\\
&\leq& \frac{16P}{S(T+1)[b(1-\beta)]^{1/2}}\big[ \Psi_{0}(\tilde{x}^{0}) - \Psi^{\star}  +  B_{\psi}\big] \vspace{1ex}\\
&& + {~} 8Q \left[ \frac{1}{\hat{b}_0(1-\beta)(T+1)}  + \frac{2(1 - \beta)}{b} \right].
\end{array}\hspace{-2ex}
\end{equation*}
Next, let us choose $\beta := 1 - \frac{b}{(T+1)}$ and $\hat{b}_0 := (T+1)$.
Then, the above estimate becomes
\begin{equation*} 
\Exp{\norms{\Gc_{\eta}(\bar{x}_N)}^2} \leq \frac{16P}{bS(T+1)^{1/2}}\big[ \Psi_{0}(\tilde{x}^{0}) - \Psi^{\star}  + B_{\psi}\big] + \frac{24Q}{T+1}.
\end{equation*}
Let us define $R_0$ and $\hat{R}_0$ as in \eqref{eq:R0_def5}, i.e.:
\begin{equation*} 
\begin{array}{l}
 R_0  :=  16P\big[ \Psi_{0}(\tilde{x}^{0}) - \Psi^{\star}  + B_{\psi}\big]  \quad\text{and}\quad
\hat{R}_0  := 24Q.
\end{array}
\end{equation*}
In this case, for a given tolerance $\varepsilon > 0$, to achieve $\Exp{\norms{\Gc_{\eta}(\bar{x}_N)}^2} \leq\varepsilon^2$, we can impose
\begin{equation*}
\frac{R_0}{bS(T+1)^{1/2}} = \frac{\varepsilon^2}{2}
\quad\text{and}\quad
\frac{\hat{R}_0}{(T+1)} = \frac{\varepsilon^2}{2}.
\end{equation*}
These conditions lead to $T+1 = \frac{2\hat{R}_0}{\varepsilon^2}$ and $S := \frac{2R_0}{b(T+1)^{1/2}\varepsilon^2} = \frac{\sqrt{2}R_0}{b\varepsilon\sqrt{\hat{R}_0}}$.
Let us also choose $\gamma :=  \frac{\varepsilon}{\sqrt{2\hat{R}_0}}$.
Then, we also obtain the parameters as in \eqref{eq:choice_of_para_ncvx_2b}, i.e.:
\begin{equation*} 
\left\{\begin{array}{ll}
&b_1 = b_2 := \frac{2c_0b\hat{R}_0}{\varepsilon^2},  \quad \hat{b}_1 = \hat{b}_2 := b, \quad b_0 := \frac{4c_0\hat{R}_0^2}{\varepsilon^4}, \quad \hat{b}_0 := \frac{2\hat{R}_0}{\varepsilon^2}, \vspace{1ex}\\
&\gamma := \frac{\varepsilon}{\sqrt{2\hat{R}_0}},\quad\text{and}\quad \beta := 1  - \frac{b\varepsilon^2}{2\hat{R}_0}.
\end{array}\right.
\end{equation*}
The total number $\Tc_F$ of function evaluations $\Fb(x_t^{(s)},\xi_t)$ is at most
\begin{equation*}
\Tc_F := S[b_0 + (T+1)(2b_1 + b_2)] = \frac{\sqrt{2}R_0}{b\varepsilon\sqrt{\hat{R}_0}}\Big[ \frac{4c_0\hat{R}_0^2}{\varepsilon^4} +  \frac{2\hat{R}_0}{\varepsilon^2} \frac{6c_0b\hat{R}_0}{\varepsilon^2} \Big] = \frac{4\sqrt{2}c_0R_0\hat{R}_0^{3/2}}{\varepsilon^5}\left(\frac{1}{b} + 3\right).
\end{equation*}
The total number $\Tc_J$ of Jacobian evaluations $\Fb'(x_t^{(s)},\xi_t)$ is at most
\begin{equation*}
\Tc_J := S[\hat{b}_0 + (T+1)(2\hat{b}_1 + \hat{b}_2)] = \frac{\sqrt{2}R_0}{b\varepsilon\sqrt{\hat{R}_0}}\Big[ \frac{2\hat{R}_0}{\varepsilon^2} +  \frac{6b\hat{R}_0}{\varepsilon^2} \Big] = \frac{2\sqrt{2}R_0\hat{R}_0^{1/2}}{b\varepsilon^3} + \frac{6\sqrt{2}R_0\hat{R}_0^{1/2}}{\varepsilon^3}.
\end{equation*}
These prove the last statement of Theorem~\ref{th:convergence2b}.
\end{proof}

\beforesec
\section{Experiment setup and additional experiments}\label{apdx:sec:add_num_exam}
\aftersec
This Supp. Doc. provides the details of configuration for our experiments in Section~\ref{sec:num_exps}, and presents more numerical experiments to support our algorithms and theoretical results.
As mentioned in the main text, all the algorithms used in this paper have been implemented in Python 3.6.3., running on a Linux desktop (3.6GHz Intel Core i7 and 16Gb memory).

Let us provide more details of our experiment configuration.
We shorten the name of our algorithm, either Algorithm~\ref{alg:A1} or Algorithm~\ref{alg:A2}, by Hybrid Stochastic Compositional Gradient, and  abbreviate it by \texttt{HSCG} for both cases.
We have implemented \texttt{CIVR} in \cite{zhang2019stochastic} and \texttt{ASC-PG} in \cite{wang2017accelerating} to compare the smooth case of $\phi_0$.
For the nonsmooth case of $\phi_0$, we have implemented two other algorithms,  \texttt{SCG} in \cite{wang2017stochastic}, and \texttt{Prox-Linear} in \cite{tran2020stochastic,zhang2020stochastic}.
While \texttt{SCG} only works for smooth $\phi_0$, we have smoothed it as in our method, and used the estimator as well as the algorithm in \cite{wang2017stochastic}, but update the smoothness parameter as in our method.
We also omit comparison in terms of time since  \texttt{Prox-Linear} becomes slower if $p$ is large due to its expensive subproblem for evaluating the prox-linear operator.
We only compare these algorithms in terms of epoch (i.e., the number of data passes).

Since both \texttt{CIVR} and \texttt{ASC-PG} are double loop, to be fair, we compare them with our restarting variant, Algorithm~\ref{alg:A2}.
To compare with \texttt{SCG} and \texttt{Prox-Linear}, we simply use Algorithm~\ref{alg:A1} since \texttt{SCG} has single loop.
Since  \texttt{Prox-Linear} requires to solve a nonsmooth convex subproblem, we have implemented a first-order primal-dual method in \cite{Chambolle2011} to solve it.
This algorithm has shown its efficiency in our test.

Note that the batch size $b$ is determined as $b := \rounds{\frac{N}{n_b}}$, where $N$ is the number of data points, and $n_b$ is the number of blocks.
In our experiments, we have varied the number of blocks $n_b$ to observe the performance of these algorithms.
Since we want to obtain the best performance, instead of using their theoretical step-sizes, we have carefully tuned the step-size $\eta$ of three algorithms in a given set of candidates $\set{1, 0.5, 0.1, 0.05, 0.01, 0.001,0.0001}$. 
For our algorithms, we have another step-size $\theta_t$, which is also flexibly chosen from $\sets{0.1, 0.5, 1}$.
For the nonsmooth case, we update our smoothness parameter as $\gamma_t := \frac{1}{2(t+1)^{1/3}}$, which is proportional to the value in Theorems~\ref{th:convergence2_scvx_diminishing} and \ref{th:nonsmooth_diminishing}. 

To further compare our algorithms with their competitors, we provide in the following subsections additional experiments for the two problems in the main text.

\beforesubsec
\subsection{Risk-averse portfolio optimization: Additional experiments}\label{apdx:D_example1} 
\aftersubsec
Figure~\ref{fig:risk_averse} in the main text has shown the performance of three algorithms on three different datasets using $8$ blocks, i.e., $n_b = 8$.
Unfortunately, since ASC-PG does not work well when the number of blocks is larger than $8$, we skip showing it in our comparison.
To obverse more performance of  \texttt{HSCG} and \texttt{CIVR}, we have increased the number of blocks $n_b$ from $8$ to $32$, $64$, and $128$.
The convergence of the two algorithms is shown in Figure \ref{fig:supp_risk}. 
As we can observe, \texttt{HSCG} remains slightly better than  \texttt{CIVR} if $n_b = 32$ or $64$.
When $n_b=128$, \texttt{CIVR} improves its performance and is slightly better than \texttt{HSCG}.

\begin{figure}[ht!]
\begin{center}
\hspace{-2ex}
\includegraphics[width=1\textwidth]{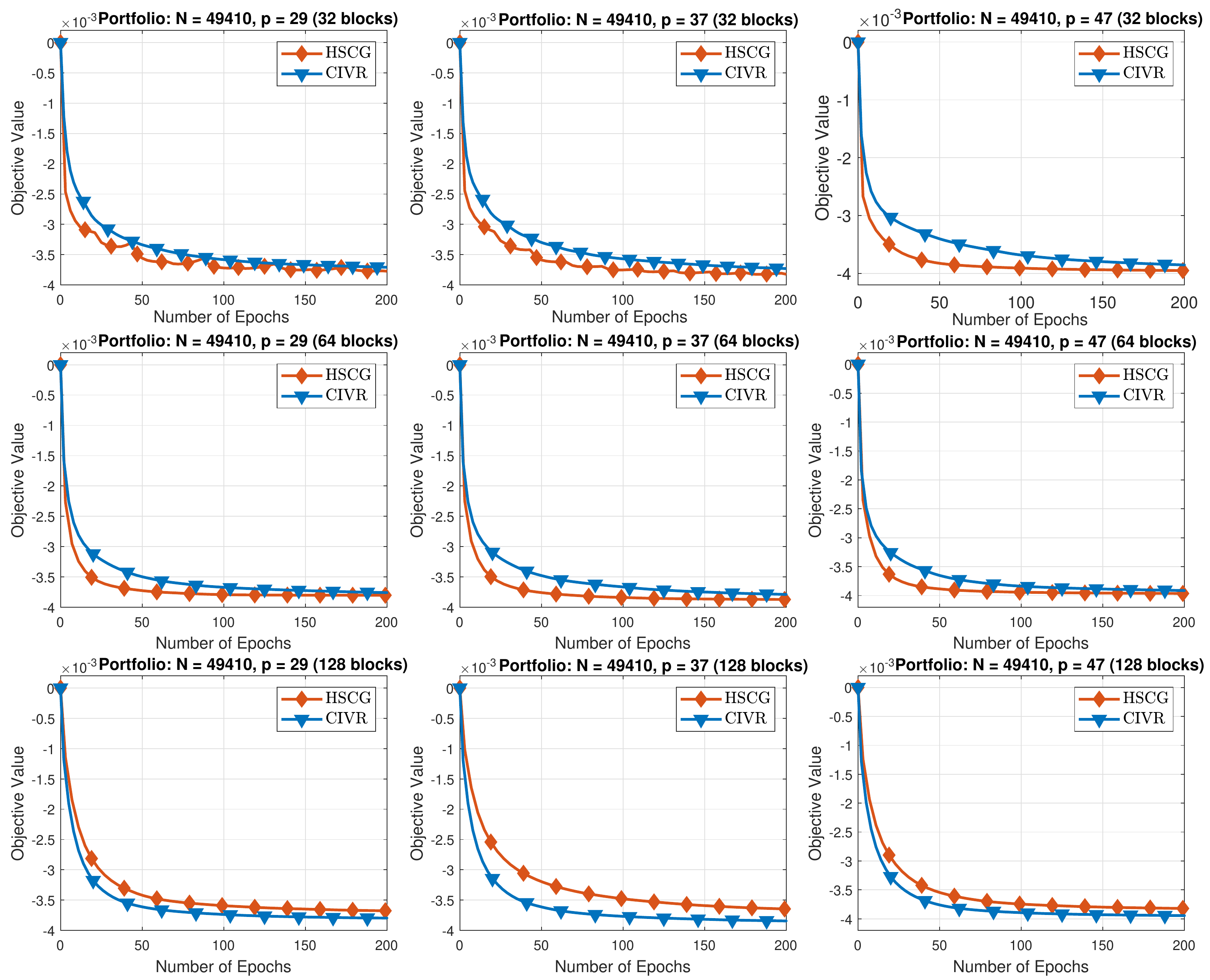}
\hspace{-2ex}
\caption{
Comparison of two algorithms for solving \eqref{eq:portfolio_exam} on larger blocks.
}
\label{fig:supp_risk}
\end{center}
\end{figure}

\beforesubsec
\subsection{Stochastic minimax problem: Additional experiments}\label{apdx:D_example2} 
\aftersubsec
For the stochastic minimax problem \eqref{eq:min_max_stochastic_opt},  Figure~\ref{fig:min_max_stochastic_opt} has shown the progress of the objective values  of three algorithms on three different datasets.
Figure~\ref{fig:min_max_stochastic_opt2} simultaneously shows both the objective values and the gradient mapping norms of this experiment.

\begin{figure}[ht!]
\hspace{-2ex}
\centering
\includegraphics[width=1\textwidth]{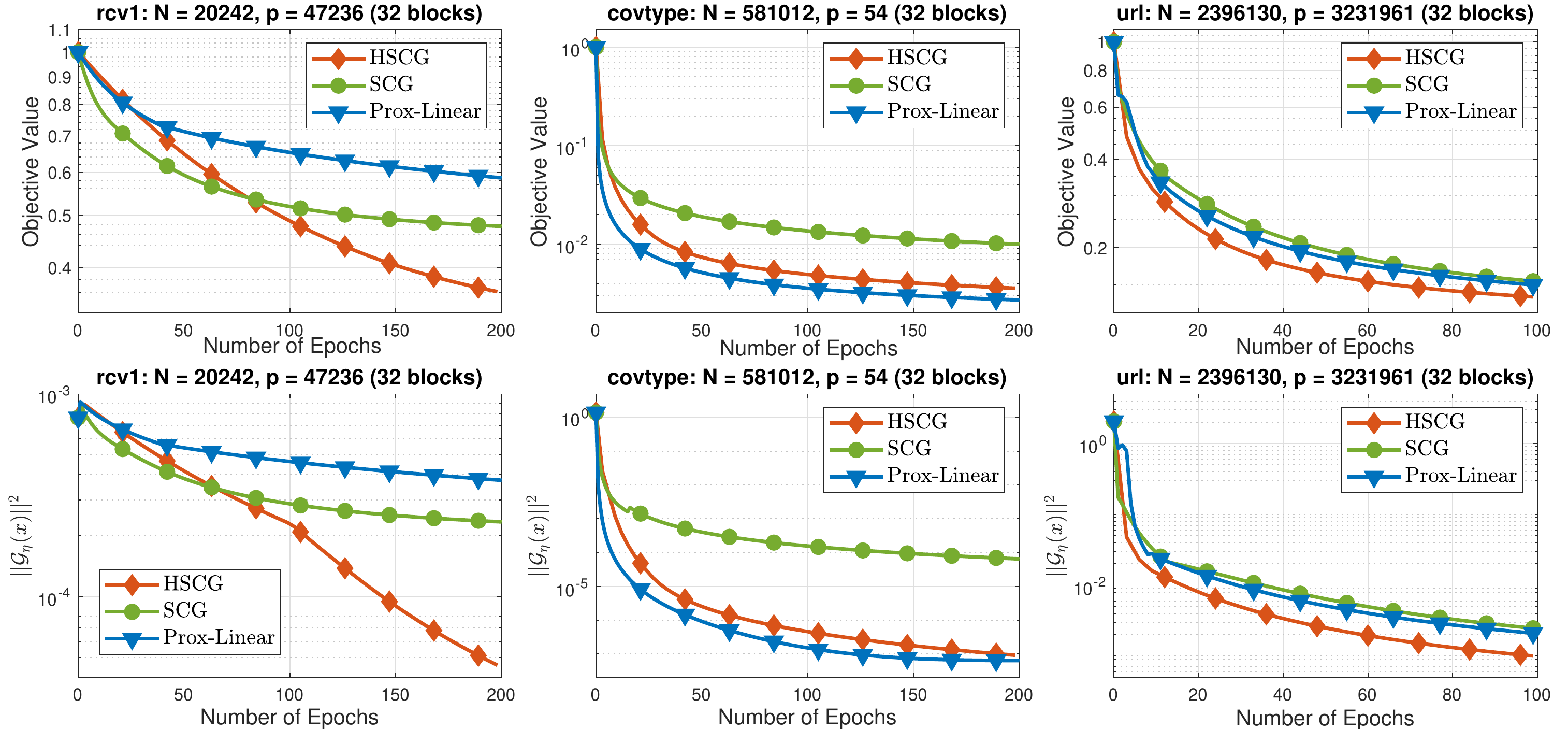}
\hspace{-2ex}
\caption{
Comparison of three algorithms for solving \eqref{eq:min_max_stochastic_opt} on $3$ different datasets in Figure~\ref{fig:min_max_stochastic_opt} with both objective values and gradient mapping norms.
}
\label{fig:min_max_stochastic_opt2}
\end{figure}

Now, let us keep the same configuration as in Figure~\ref{fig:min_max_stochastic_opt}, but run one more case, where the number of blocks is increased to $n_b = 64$.
The results are shown in Figure \ref{fig:supp_minmax_2}.

We again see that \texttt{HSCG} still highly outperforms the other two methods: \texttt{SCG} and \texttt{Prox-Linear} on \textbf{rcv1}.
For \textbf{url}, \texttt{HSCG} is still slightly better than \texttt{Prox-Linear} as we have observed in Figure~\ref{fig:min_max_stochastic_opt}.
However, for \textbf{covtype}, again,  \texttt{Prox-Linear} shows a better performance than the other two competitors. 
Note that since $p=54$ in this dataset, we can solve the subproblem in  \texttt{Prox-Linear} up to a high accuracy without incurring too much computational cost.
Therefore, the inexactness of evaluating the prox-linear operator does not really affect the performance in this example.

\begin{figure}[ht!]
\begin{center}
\hspace{-2ex}
\includegraphics[width=1\textwidth]{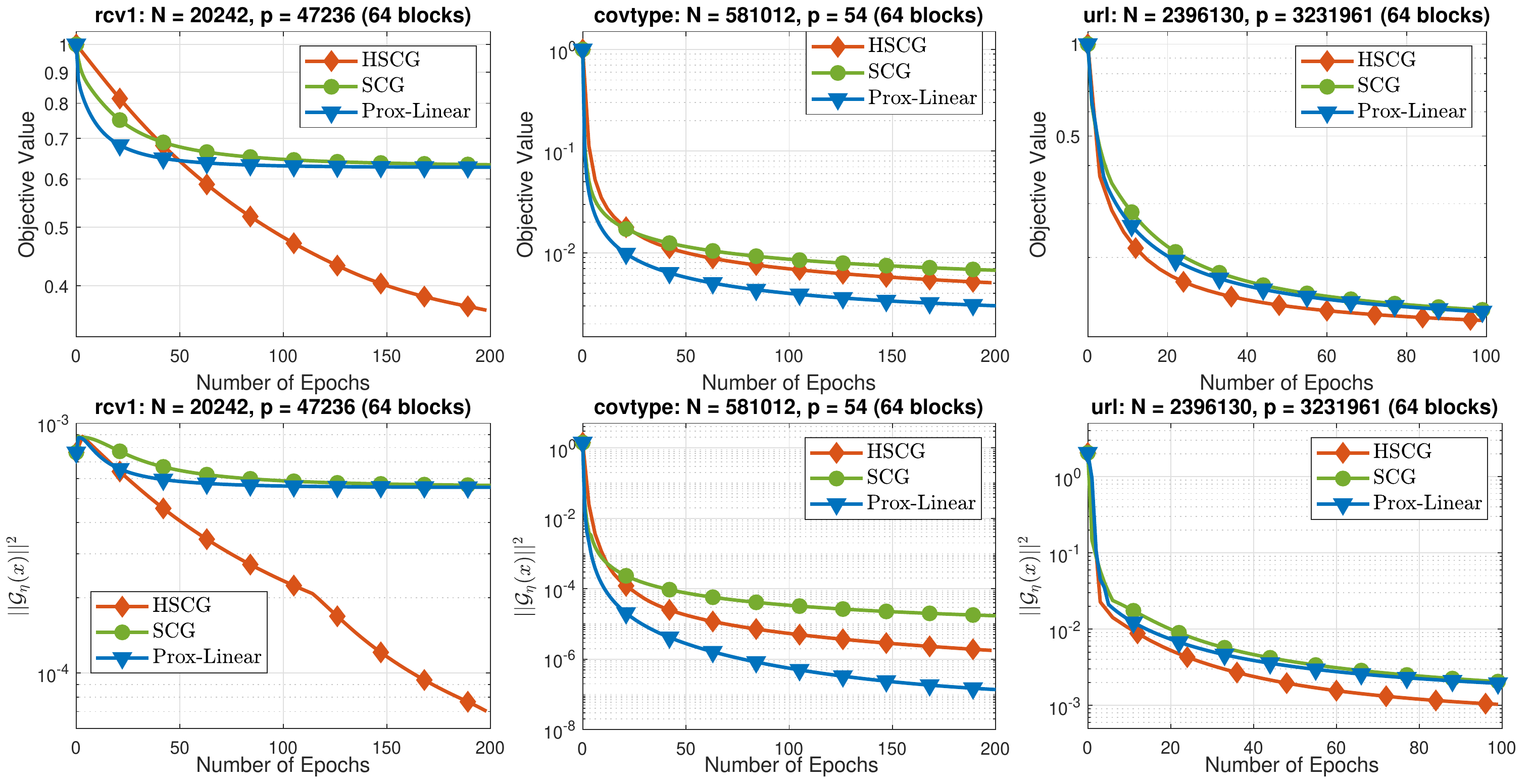}
\hspace{-2ex}
\caption{
Comparison of three algorithms for solving \eqref{eq:min_max_stochastic_opt} on 64 blocks.
}
\label{fig:supp_minmax_2}
\end{center}
\end{figure}

Finally, we test three algorithms: \texttt{HSCG}, \texttt{SCG}, and \texttt{Prox-Linear} on other three datasets:  \textbf{w8a}, \textbf{phishing}, and \textbf{mushrooms} from LIBSVM \cite{CC01a}.
We use the same number of blocks $n_b = 32$, and the results are reported in Figure~\ref{fig:supp_minmax_1}.
Figure~\ref{fig:supp_minmax_1} shows that  \texttt{HSCG} highly outperforms both \texttt{SCG} and \texttt{Prox-Linear} on \textbf{w8a} and  \textbf{phishing}.
However,  \texttt{Prox-Linear} becomes better than the other two on the \textbf{mushrooms} dataset.

\begin{figure}[ht!]
\begin{center}
\hspace{-2ex}
\includegraphics[width=1\textwidth]{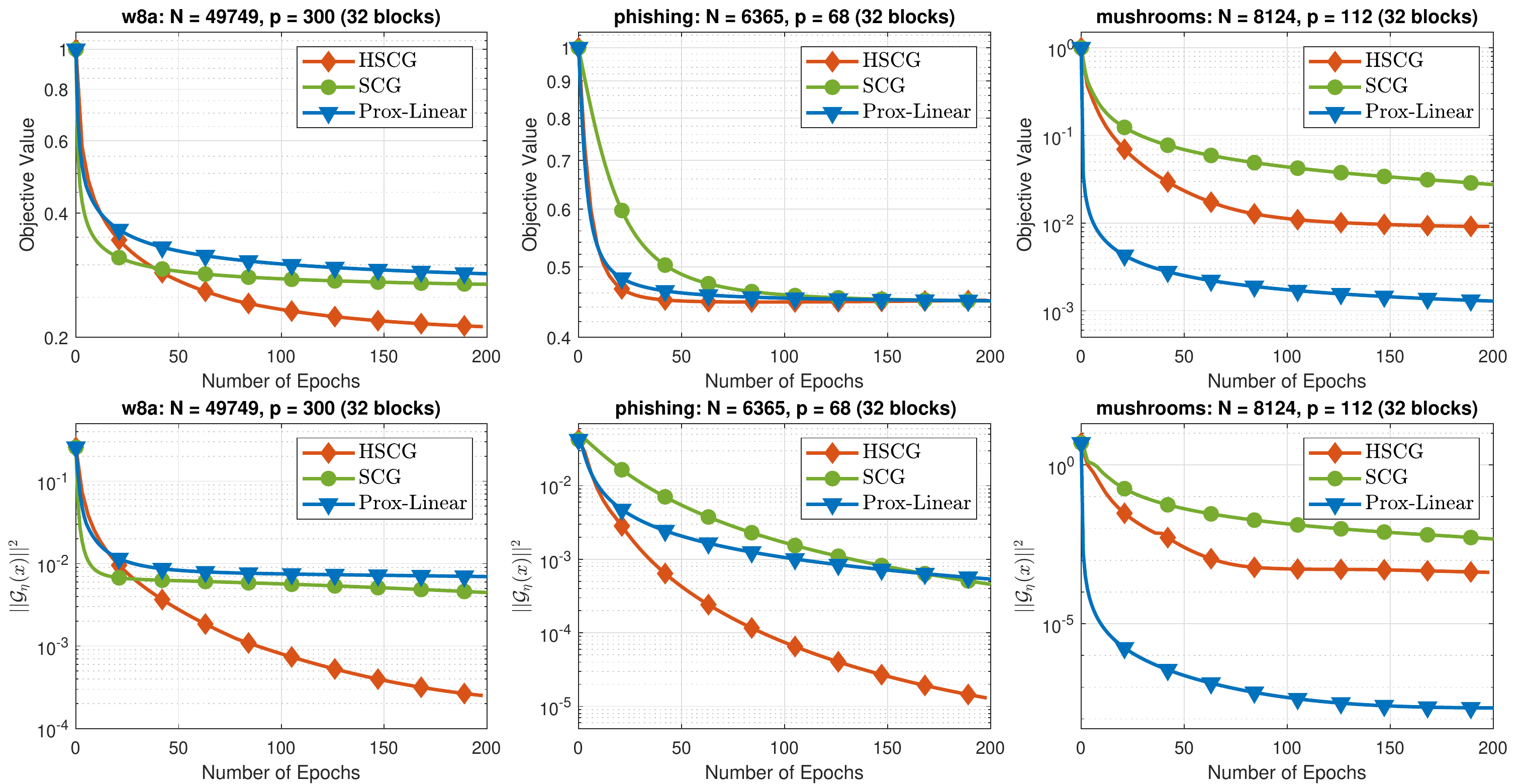}
\hspace{-2ex}
\caption{
Comparison of three algorithms for solving \eqref{eq:min_max_stochastic_opt} on three more different datasets.
}
\label{fig:supp_minmax_1}
\end{center}
\end{figure}

\end{document}